\setlist[enumerate]{label={\upshape(\roman*)}}
\theoremstyle{plain}
 \newtheorem{thm}{Theorem}[section]
 \newtheorem{prop}[thm]{Proposition}
 \newtheorem{lemma}[thm]{Lemma}
 \newtheorem{cor}[thm]{Corollary}
\theoremstyle{definition}
 \newtheorem{defn}[thm]{Definition}
 \newtheorem{assum}[thm]{Assumption}
 \newtheorem{remark}[thm]{Remark}
\newcommand{\mbR}{\mathbb{R}}
\newcommand{\mbC}{\mathbb{C}}
\newcommand{\mbD}{\mathbb{D}}
\newcommand{\mbN}{\mathbb{N}}
\newcommand{\mbT}{\mathbb{T}}
\newcommand{\bfA}{\mathbf{A}}
\newcommand{\bfB}{\mathbf{B}}
\newcommand{\euR}{\EuScript{R}}
\newcommand{\clA}{\mathcal{A}}
\newcommand{\clH}{\mathcal{H}}
\newcommand{\clK}{\mathcal{K}}
\newcommand{\clV}{\mathcal{V}}
\newcommand{\Tr}{\operatorname{Tr}}
\renewcommand{\Im}{\operatorname{Im}}
\newcommand{\dom}{\operatorname{dom}} 
\newcommand{\im}{\operatorname{im}}
\newcommand{\rank}{\operatorname{rank}}
\newcommand{\supp}{\operatorname{supp}}
\newcommand{\sgn}{\operatorname{sgn}}
\newcommand{\scal}[1]{\left\langle #1 \right\rangle}
\newcommand{\eps}{\varepsilon}
\renewcommand{\phi}{\varphi}
\newcommand{\rind}{\operatorname{ind}_{res}}
\newcommand{\coker}{\operatorname{coker}}
\renewcommand{\iff}{\Leftrightarrow} 
\newcommand{\lra}{\leftrightarrow}
\newcommand{\Rset}{\euR(z_0)}
\begin{document}
\title[Coupling resonances and spectral properties of $(H_0-z)^{-1}V$]{Coupling resonances and spectral properties of the product of resolvent and perturbation}
\author{Nurulla Azamov and Tom Daniels}
 \keywords{branching of coupling resonance points, order of eigenpath, tangency to resonance set}
 \subjclass[2020]{ 
     Primary 47A11; 
     Secondary 
     47A55, 
     81U24. 
 }
\begin{abstract}
Given a self-adjoint operator~$H_0$ and a relatively compact self-adjoint perturbation~$V$, we study in some detail the spectral properties of the product~$(H_0-z)^{-1}V$, $z\in\mbC$.
For~some numbers $r_z,$ the eigenvalues of $(H_0 + s V - z)^{-1}V$, $s\in\mbC$, are $(s - r_z)^{-1}$. 
We study the root spaces of the eigenvalues $(s - r_z)^{-1}$ and complex-analytic properties of the functions~$r_z$ such as branching points. 
In particular, for a generic case, we give a variety of necessary and sufficient conditions for branching. 
The functions $r_z,$ called coupling resonances, are important in the spectral analysis of $H_0 + r V$ for any real number $r.$ 
For instance, they afford a description of the spectral shift function (SSF) of the pair $H_0$ and $V,$ as well as the absolutely 
continuous and singular parts of the SSF. 
A thorough study of real-valued coupling resonances $r_\lambda$ for real~$\lambda$ outside of the essential spectrum was carried out in a recent work by the first author. 
Here we extend this study to the complex domain, motivated by the fact, which is well known in the case of a rank-one perturbation, that the behaviour of coupling resonances~$r_z$ near the essential spectrum provides valuable information about the latter.
\end{abstract}
\maketitle


\section{Introduction}
The product 
\begin{equation} \label{F: Rz(H_0)V}
  R_z(H_0)V = (H_0-z)^{-1}V,
\end{equation}
where~$H_0$ is a self-adjoint operator and~$V$ is a~$H_0$-compact self-adjoint operator, is ubiquitous within perturbation and spectral theories, appearing for example in the second resolvent identity, Neumann series and its numerous applications, perturbation determinants, the stationary formula for the scattering matrix, and in scattering theory more generally. 
A detailed study of the spectral properties of this operator is of interest, yet we feel that the subject has not received the attention it deserves. 
This paper arose more specifically because of the relevance of the spectral properties of~\eqref{F: Rz(H_0)V} to the phenomenon, associated with the pair of self-adjoint operators $H_0$ and $H_1 := H_0 + V$, of the flow of singular spectrum within the aggregate of essential spectrum.
Before discussing this connection and the context it provides, one straightforward way to outline the main result of this paper is as follows.

Since the operator~\eqref{F: Rz(H_0)V} is compact, its spectrum is discrete away from zero, which is the only element of its essential spectrum. 
Further, since it depends holomorphically on~$z,$ the isolated elements of its spectrum are also holomorphic in~$z$ -- everywhere besides potentially any of the discrete set of exceptional points, defined as those points where the number of distinct eigenvalues within a small neighbourhood of a given isolated eigenvalue is different from its otherwise constant value. 
The general theory of e.g. Kato's classic~\cite{Kat80} provides many other properties of these eigenvalues. 
However, we are interested here in those spectral properties that are specific to~\eqref{F: Rz(H_0)V}. 
Three questions which can be posed in this regard are: 
\begin{itemize}[leftmargin=1.5\parindent]
\item Can an eigenvalue get absorbed by, or, which is the same, emerge from, zero? 
\item Are there branching points of the eigenvalues? 
\item Do the eigenvalues have critical points, that is, points with zero derivative? 
\end{itemize}
The main result of this paper addresses the second question. 
In a limited yet generic case, we establish various equivalent conditions to branching. 
One characterisation for an eigenvalue $-s^{-1} = -s(z)^{-1}$ of~\eqref{F: Rz(H_0)V} to have a branching point at~$z$ is as follows. 
Let $\phi$ be an eigenvector of 
\[H_s := H_0 + sV \]
corresponding to the eigenvalue~$z$. 
Note that the eigenspaces of~\eqref{F: Rz(H_0)V} and~$H_s$ corresponding to the eigenvalues~$-s^{-1}$ and~$z$ respectively are equal, as is easily seen by multiplying the eigenvalue equations by $H_0 - z$ and $R_z(H_0)$ respectively.
Then $z$ is a branching point of~$s(z)$ iff there exists a `conjugate' vector $\phi^*$, belonging to the eigenspace of $H_{\bar s} = H_0 + \bar sV$ corresponding to~$\bar z$ and satisfying $\scal{\phi^*,\phi} = 1$, such that $\scal{\phi^*, V\phi} = 0.$ 

The limitation of this result is that we assume the existence of a conjugate vector as well as its regular behaviour as a function of $s$; an assumption which holds for example if the eigenvalue~$z$ of $H_s$ is simple, or if~$z$ is semisimple and, when considered as a function of $s$, it does not have an exceptional point at~$s$. 
We believe that this concept of a conjugate vector may prove useful in further study of the essential spectrum. 

\medskip
The eigenvalues of~\eqref{F: Rz(H_0)V} are related to the eigenvalues of 
\begin{equation} \label{F: Rz(H_s)V}
  R_z(H_s)V = (H_0 + sV - z)^{-1}V,
\end{equation}
by a simple relation: 
there are functions, $r_z^j,$ $j=1,2,\ldots,$ of the variable~$z$ only, 
such that for any~$s\in\mbC$ for which~$z$ belongs to the resolvent set of~$H_s$, the eigenvalues of~\eqref{F: Rz(H_s)V} are 
\[
  (s-r_z^j)^{-1}, \quad j=1,2,\ldots
\]
This relation shows that the functions $r_z^j$ have no less bearing than the eigenvalues themselves. 

If $V$ has rank one, there is only one such function, which appears in different guises, for instance in relation to Weyl $m$-functions in the theory of the Sturm-Liouville equation (see e.g.~\cite{Sim05}) and in the theory of random Schr\"odinger operators (see e.g.~\cite[Theorem~5.3]{AizWar}). 
If~$\rank(V) = 1$ and also $V\geq 0,$ the function~$r_z$ is a Herglotz function, whose properties are well known to be effective in the study of the spectral properties of $H_0$ and $H_1$ (e.g.~\cite{Don,Sim05}). 
Unfortunately, complications arise for more general~$V$, due in a large part to the fact that the functions~$r_z^j$ may have branching points. 

The functions $r_z^j$ admit other equivalent descriptions. 
As mentioned above, they are those values of~$s$ for which the {\em energy variable}~$z$ is an eigenvalue of~$H_s$.
They are also the poles of~\eqref{F: Rz(H_s)V} treated as a meromorphic function of the {\em coupling variable}~$s$. 
Indeed, the second resolvent identity implies
\[
 R_z(H_s)V = (1 + sR_z(H_0)V)^{-1}R_z(H_0)V
\]
and it follows from the analytic Fredholm alternative (see e.g.~\cite[Theorem~VI.14]{ReeSim1}) 
that when considered as a function of~$s$, the poles of~\eqref{F: Rz(H_s)V} occur when $s=r_z^j$.
It is convenient to represent this relation symbolically as \[ z\lra s, \] 
in which the energy and coupling variables are in general multi-valued functions of one another which can have branching points.

Following the physics literature, poles of the scattering matrix $S(z; H_1,H_0)$ treated as a function of~$z$ are called {\em energy resonance points} (see e.g.~\cite{Zwo}). 
For this reason, we refer to those values of~$s$ such that $z\lra s$ as {\em coupling resonance points}, since they are poles of the scattering matrix $S(z; H_s,H_0)$ treated as a function of~$s$ (see~\cite{Aza11,Aza16,AzaDan18}). 
Similarly, we refer to the functions~$r_z^j$ as {\em coupling resonance functions} of the pair~$H_0$ and~$V$.
Because we will only be concerned with coupling resonances here, we usually leave the word `coupling' implicit. 

In the next few pages we discuss the connection between coupling resonances and the flow of singular spectrum and related topics, hoping to give a fuller perspective and further explain our motivation. However, this discussion digresses from the focus of this paper and the upcoming sections do not depend on it.

\medskip
Coupling resonances appear to contain a lot of information about the pair~$H_0$ and~$V$. 
For~instance, assuming $V$ satisfies a certain trace class condition, the Lifshitz-Krein spectral shift function (SSF) $\xi(\lambda;H_1,H_0)$ can be expressed in such terms (see~\cite{AzaDan19}):
\begin{equation}\label{F: SSF in terms of resonance data}
  \xi(\lambda; H_1,H_0) 
      = \frac 1{2\pi} \int_0^1 \sum_{j=1}^\infty \frac {2\beta_\lambda^j}{|r-\alpha_\lambda^j|^2 + |\beta_\lambda^j|^2}\,dr
    + \sum_{r_\lambda \in [0,1]} \rind(\lambda; H_{r_\lambda},V),
\end{equation}
where $r^j_{\lambda+i0} = \alpha_\lambda^{j} + i \beta_\lambda^{j}$ are the limits of resonance points with non-zero imaginary part~$\beta_\lambda^j$ and the second term is the sum of {\em resonance indices}, which are only non-zero for the discrete set of real limit points $r_{\lambda} = r^j_{\lambda+i0}$. 

The resonance index, which originates in~\cite{Aza16}, measures the flow of singular spectrum and provides the previously mentioned link between this phenomenon and the spectral properties of~\eqref{F: Rz(H_0)V}.
It is those eigenvalues of~\eqref{F: Rz(H_0)V} which converge to the real axis as $z=\lambda+iy\to\lambda+i0$ that are involved in the flow of singular spectrum of~$H_s$ through the point~$\lambda$. 
More specifically, if $N_+$ and $N_-$ respectively denote the numbers of eigenvalues of~\eqref{F: Rz(H_0)V} converging from the upper~$\mbC_+$ and lower~$\mbC_-$ complex half-planes to a real number~$-r_\lambda^{-1}$ as $y\to 0^+$, then their difference $N_+ - N_-$ is equal to the net flux of singular spectrum through~$\lambda$ as the perturbation~$H_s$ crosses the operator $H_{r_\lambda} = H_0 + r_{\lambda}V$.
The~infinitesimal singular spectral flow defined in this way is the {\em resonance index}, denoted 
\[
 \rind(\lambda;H_{r_\lambda},V) = N_+ - N_-.
\]

The sum of resonance indices at points~$r_{\lambda}$ from the compact interval~$[0,1]$, only finitely many of which can be non-zero, is called the {\em total resonance index} for the pair $H_0$ and~$H_1$.
For a point~$\lambda$ outside of the common essential spectrum~$\sigma_{ess}$ of $H_0$ and~$H_1$, the associated flow of singular spectrum is also known as {\em spectral flow} which has many characterisations.
In this case, for $\lambda\notin\sigma_{ess}$, direct proofs can be found in~\cite{Aza17} of the agreement of the total resonance index with four prominent definitions of spectral flow: in terms of the intersection number of eigenvalues with a point (\cite{APS}), the Fredholm index of a pair of projections~(\cite{ASS,Phi}), Robbin-Salamon's axioms (\cite{RobSal}), and the SSF~(\cite{Kre}). 
Of these definitions, the resonance index has the advantage of requiring minimal assumptions and moreover it is the only one capable of measuring the flow of singular spectrum within the essential spectrum as well. 

If~$\lambda$ belongs to the essential spectrum, it is convenient to assume a decomposition
\[ V = F^*JF, \] 
where $F$ is a closed $|H_0|^{1/2}$-compact operator and $J$ is a bounded self-adjoint operator. 
One choice that works for any $H_0$-compact self-adjoint $V$ is $F = |V|^{1/2}$ and $J = \sgn V$. 
This factorisation also provides a technique for dealing with more general symmetric forms~$V$ that are merely relatively form-compact with respect to~$H_0$.
Let the sandwiched resolvent be denoted \[ T_z(H_0) := FR_z(H_0)F^*. \]
Then instead of~\eqref{F: Rz(H_0)V} we can consider the compact operator 
\begin{equation}\label{F: Tz(H_0)J}
T_z(H_0)J,
\end{equation}
which shares the same non-zero eigenvalues, but has the advantage of possibly existing in the limit as~$z=\lambda+iy$ approaches the spectrum of~$H_0$. 
If the sandwiched resolvent has a norm limit 
\begin{equation*}\label{F: T_lambda+i0(H_0)}
 T_{\lambda+i0}(H_0) := \lim_{y\to 0^+}T_{\lambda+iy}(H_0),
\end{equation*} 
which for example, if~$V$ satisfies a relatively trace class condition, occurs for a.e.~point~$\lambda\in\mbR$ (see e.g.~\cite{BirEnt67,DeB62,Yaf92}), then the limits of eigenvalues of~\eqref{F: Rz(H_0)V} are eigenvalues of~\eqref{F: Tz(H_0)J} with $z=\lambda+i0$ and the resonance index is well-defined. 

The total resonance index is related to the Birman-Schwinger principle (see e.g.~\cite{Sim05,Sim79}), which is based on the relation $z \lra s$ and is used for example in the evaluation of bounds on the number of eigenvalues of Schr\"odinger operators. 
Supposing~$H_0$ is bounded from below, $V\leq 0$, and $z=\lambda < \inf\sigma(H_0)$, it states that the number of eigenvalues of~$H_1$ which are less than~$\lambda$ is equal to the number~$N_\lambda$ of eigenvalues of~$R_\lambda(H_0)V$ which are less than~$-1$.
In~\cite{Sob} (also see~\cite{BirPus}), it is observed that the Birman-Schwinger principle can be extended to a representation of the SSF for $\lambda\notin\sigma_{ess}$: again supposing $V\leq 0$, 
\begin{equation}\label{F: SSF = -N} 
  \xi(\lambda;H_1,H_0) = -N_\lambda. 
\end{equation}
Note that the total resonance index gives the same result, since for $V\leq 0$ all the eigenvalues of~\eqref{F: Rz(H_0)V} shift into $\mbC_-$ as $z=\lambda+i0 \to \lambda+iy$ with $y>0$.
The representation~\eqref{F: SSF = -N} of the flow of singular spectrum is extended to the essential spectrum in~\cite{Pus11}, where the focus is on an interval of essential spectrum in a neighbourhood of which the sandwiched resolvent~$T_z(H_0)$ is assumed to be uniformly continuous in the operator norm. 
The total resonance index provides a further extension, valid at any point~$\lambda$ where the limit $T_{\lambda+i0}(H_0)$ exists. 

Within the essential spectrum, the relationship of the resonance index to the SSF is clarified as follows. 
Assuming~$V$ is of relatively trace class type, $\xi(\lambda;H_1,H_0)$~and its absolutely continuous $\xi^{(a)}(\lambda;H_1,H_0)$ and singular $\xi^{(s)}(\lambda;H_1,H_0)$ parts can be respectively defined by the formula
\begin{equation*}\label{F: SSFs defn}
 \xi^{\#}(\varphi;H_1,H_0) = \int_0^1 \Tr\left( E_r^{\#}(\supp \varphi)V\varphi(H_r)\right)\,dr, \quad \varphi\in C_c(\mbR),
\end{equation*}
where the placeholder $\#$ is either removed altogether, or replaced by~$(a)$, or~$(s)$, in which case $E_r^{(a)}$ and $E_r^{(s)}$ denote the absolutely continuous and singular parts of the spectral measure~$E_r$ corresponding to the self-adjoint operator~$H_r = H_0 + rV$. 
The first case is the famous Birman-Solomyak formula for the SSF~\cite{BirSol}, while the variants defining its absolutely continuous and singular parts originate in~\cite{Aza11}.
To~be clear, in~any case this formula defines a locally finite measure via the Riesz representation theorem, which can be shown to be absolutely continuous and can therefore be identified with its density function (see e.g.~\cite{AzaDan18}).

The singular SSF is integer-valued and coincides with the total resonance index (see e.g.~\cite{Aza16,AzaDan18}): For a.e.~$\lambda\in\mbR$,
\begin{equation}\label{F: sSSF = total res ind}
 \xi^{(s)}(\lambda;H_1,H_0) = \sum_{r_\lambda\in [0,1]} \rind(\lambda;H_{r_\lambda},V).
\end{equation}
(Hence the remaining term on the right of~\eqref{F: SSF in terms of resonance data} is $\xi^{(a)}(\lambda;H_1,H_0)$.)
In fact the total resonance index was the result of a search for a more tangible representation of the singular SSF, after it became clear that, as a function of~$r\in\mbR$ for fixed~$\lambda$, $\xi^{(s)}(\lambda;H_r,H_0)$~is locally constant with discontinuities at the real limits~$r_\lambda$ of resonance functions. 
This fact was derived from a representation of the singular SSF in terms of the scattering matrix, which we now outline before returning to the focus of this paper.

\medskip
For our purposes, the (off-axis) scattering matrix can be defined by the `stationary formula'
\begin{equation}\label{F: stationary formula}
 S(z;H_r,H_0) := 1 - 2i\sqrt{\Im T_{z}(H_0)}J(1 + rT_z(H_0)J)^{-1}\sqrt{\Im T_z(H_0)},
\end{equation}
where $r\in\mbR$ and $z=\lambda+iy$, for $y\geq 0$.  
The case when $y=0$ is defined by the limit $y\to 0^+$, assuming it exists. 
This amounts to assuming the existence of~$T_{\lambda+i0}(H_0)$ along with $T_{\lambda+i0}(H_r)$, and for this to be possible, it must be that $s=0$ and $s=r$ are not limit points of resonance functions. 
Note that a resonance function cannot take real values except in the limit as $y\to 0$.

An algebraic calculation shows that the scattering matrix is unitary and 
thus belongs to the operator-normed group 
\begin{equation}\label{F: clU}
 \{\text{unitary operators } S \text{ such that } S-1 \text{ is compact}\}.
\end{equation}
The eigenvalues of any such operator lie on the unit circle~$\mbT$ and can only accumulate at the common essential spectrum~$\{1\}$.  
We now consider the limit of~\eqref{F: stationary formula} as $y\to +\infty$. Since in this case $\Im T_{z}(H_0) \to 0$, it follows that the scattering matrix $S(\lambda;H_1,H_0)$ for the pair~$H_0$ and~$H_1$ can be continuously connected to the identity operator~$1$ within the normed group \eqref{F: clU}.
In doing so, all of its eigenvalues are deformed along the circle to $1$. 
Following~\cite{Pus01}, we denote the net number of eigenvalues which cross a given point $e^{i\theta}\in\mbT$, or in other words the spectral flow through $e^{i\theta}$, by~$-\mu(\theta,\lambda;H_1,H_0)$. 
In this context, our preferred method of definition for the flow of discrete spectrum is based on the continuous enumeration of eigenvalues -- an intuitive but non-trivial result which is presented in \cite{ADT}. 
Assuming $V$ is relatively trace class, it is proved by A.~Pushnitski in~\cite{Pus01} that the average of~$-\mu(\theta,\lambda;H_1,H_0)$ over $e^{i\theta}\in\mbT$ is equal to the SSF. 
That is, for a.e.~$\lambda\in\mbR$,
\begin{equation}\label{F: SSF = mu}
 \xi(\lambda;H_1,H_0) = -\frac{1}{2\pi}\int_0^{2\pi} \mu(\theta,\lambda;H_1,H_0) \,d\theta.
\end{equation}

Spectral flow is a well-known homotopy invariant, so that $\mu(\theta,\lambda;H_1,H_0)$ is not altered by continuous deformations of the path connecting $S(\lambda;H_1,H_0)$ with the identity. 
However, there is a non-equivalent such path: instead of sending the imaginary part of the energy variable $z=\lambda+iy$ from $y=0$ to $y=+\infty$, send the coupling variable from $r=1$ to $r=0$. 
When defined by~\eqref{F: stationary formula}, it is evident (from the analytic Fredholm theorem) that the scattering matrix depends meromorphically on the coupling variable~$r$ and that its poles can only occur at real resonance points. 
However these turn out to be removable singularities, since the scattering matrix is unitary and thus bounded away from the discrete set of such resonance points. 

The definition~\eqref{F: stationary formula} is properly justified by the following theorem (see e.g.~\cite{AzaDan18}, the proof is based on an argument originating in~\cite{Aza16}): 
Assume the limiting absorption principle holds, by which we mean that the set $\Lambda$, of values of $\lambda$ such that both limits~$T_{\lambda+i0}(H_0)$ and $T_{\lambda+i0}(H_r)$ exist, has full measure in~$\mbR$. Then there is a spectral representation of the absolutely continuous part of~$H_0$ as a direct integral over~$\Lambda$, on which the definition of $S(\lambda;H_r,H_0)$ by~\eqref{F: stationary formula} coincides with the classical definition of the scattering matrix in terms of the fibres of the product of wave operators.
Since the full set~$\Lambda$ is given explicitly, it becomes possible to consider the scattering matrix as a function of the coupling variable. 
Assuming $V$ satisfies a relatively trace class condition, the set $\Lambda$ is known to have full measure for any $r\in\mbR$ (see e.g.~\cite{BirEnt67,DeB62,Yaf92}).
Moreover, if $\lambda$ is such that~$T_{\lambda+i0}(H_0)$ exists, then $T_{\lambda+i0}(H_r)$ must also exist and hence $\lambda\in\Lambda$, as long as $r$ is not a real resonance point. 

After removing a finite number of singularities as $r=1$ is deformed to $r=0$, we obtain an analytic path connecting $S(\lambda;H_1,H_0)$ with the identity. 
Following~\cite{Aza16}, let the spectral flow along this path be denoted by $-\mu^{(a)}(\theta,\lambda;H_1,H_0)$. 
Assuming $V$ is relatively trace class, the average of $-\mu^{(a)}(\theta,\lambda;H_1,H_0)$ over $e^{i\theta}\in\mbT$ is shown in~\cite{AzaDan18}, following an argument originating in~\cite{Aza16}, to be equal to the absolutely continuous SSF; for a.e.~$\lambda\in\mbR$,
\begin{equation}\label{F: a.c.SSF = mua}
 \xi^{(a)}(\lambda;H_1,H_0) = -\frac{1}{2\pi}\int_0^{2\pi} \mu^{(a)}(\theta,\lambda;H_1,H_0) \,d\theta.
\end{equation}

A representation of the singular SSF is obtained by subtracting~\eqref{F: a.c.SSF = mua} from~\eqref{F: SSF = mu}. 
By the additivity of spectral flow, the so-called {\em singular $\mu$-invariant}, defined by 
\[
 \mu^{(s)}(\lambda;H_1,H_0) := \mu(\theta,\lambda;H_1,H_0) - \mu^{(a)}(\theta,\lambda;H_1,H_0),
\] 
is the spectral flow of the scattering matrix $S(z;H_r,H_0)$, $z=\lambda+iy$, as it traverses the loop based at the identity which results as $(y,r)$ goes from $(+\infty,1)$ to $(0,1)$ to $(0,0)$.  
As a (finite) sum of winding numbers of eigenvalues, the singular $\mu$-invariant does not depend on the angle~$\theta$ and we find that for a.e.~$\lambda\in\mbR$,
\begin{equation}\label{F: sSSF = singular mu inv}
 \xi^{(s)}(\lambda;H_1,H_0) = -\mu^{(s)}(\lambda;H_1,H_0).
\end{equation}

Given the previous two representations of the singular SSF~\eqref{F: sSSF = total res ind} and~\eqref{F: sSSF = singular mu inv}, it follows that the total resonance index and singular $\mu$-invariant are equal for relatively trace-class~$V$ and a.e.~$\lambda\in\mbR$. 
Note however that a relatively trace class condition is not required for the definition of either of these representations. 
Suppose~$\lambda$ is such that both norm limits~$T_{\lambda+i0}(H_0)$ and $T_{\lambda+i0}(H_1)$ exist. 
Then both the total resonance index and the singular $\mu$-invariant are well-defined at~$\lambda$ and moreover, as shown in~\cite{AzaDan20}, these integers are equal. 
This leads to the conjecture that, although the SSF does not exist in this context, the singular SSF does exist and is given by the same integer.
Much more information about the decomposition of the SSF and its connections to scattering theory can be found e.g.~in~\cite{Aza11,AzaDan18} and for more about the total resonance index including other descriptions see e.g.~\cite{Aza16,Aza17}. 

\medskip
Returning to the focus of this paper, things are much simpler outside of the essential spectrum, as is to be expected. 
In its complement in $\mbR$, the properties of resonance functions are studied in detail in~\cite{Aza17}. 
Here we extend this study to the complex plane, with the aim to be able to approach the essential spectrum and allow a further study of its perturbation by relatively compact operators.
Although a number of results have proofs similar to those of~\cite{Aza17}, there are some essential differences and for this reason we believe that a detailed presentation of this material is warranted. 

In~terms of the relation $z\lra s$, shifting into the complex plane entails working with non-self-adjoint operators~$H_s$. 
In fact, we will consider perturbing the operator~$H_s$ in another relatively compact direction~$W$.
This introduces a few novel hurdles, since some arguments used in~\cite{Aza17} are specific to the self-adjoint case. 
The techniques presented in this paper are not only more general, but are also simpler in some aspects. 
However, these new elements of the method do not void the approach of~\cite{Aza17} which allows to prove deeper results for the self-adjoint case.

Although the eigenspaces of $H_s$ at $z$ and of~\eqref{F: Rz(H_0)V} at $-s^{-1}$ coincide, the root spaces do not.
In~the self-adjoint case of~$s\in\mbR$, the eigenvalue~$z$ is semisimple, i.e.~the root space and eigenspace of~$H_s$ coincide, and for $s\in\mbC\setminus\mbR$ this is more or less an assumption we will make.
On the other hand, the root space of~\eqref{F: Rz(H_0)V} can be strictly larger than its eigenspace even for real~$s.$ 
Root vectors are called {\em resonance vectors} of the triple $(z;H_0,V)$.
In Section~\ref{S: Rz(N_v)} we consider the Laurent coefficients of the resolvent $R_z(H_s)$ as a function of the coupling variable~$s$ and their connection with the projection onto the space of resonance vectors. 
In particular, it is shown that the latter is equal to the product of the residue of the resolvent at a resonance point with the perturbation. 

Section~\ref{S: semisimple} concerns the assumptions used in later sections, which in essence amount to the following: Given a path of eigenvectors~$\phi(s)$ corresponding to the eigenvalue~$z(s)$ of~$H_s$, we assume the existence of a non-orthogonal conjugate eigenpath $\phi^*(s)$ corresponding to the eigenvalue~$\bar z $ of~$H_{\bar s}$. 
For any given eigenvector $\phi$ corresponding to a fixed eigenvalue~$z$ of $H_s$, the existence of an eigenvector~$\phi^*$ of~$H_{\bar s}$ corresponding to~$\bar z$ which is not orthogonal to~$\phi$, is an equivalent condition to the semisimplicity of the eigenvalue~$z$. 
In addition to this, we require a certain stability under perturbations of the coupling variable. These conditions are automatically satisfied in the self-adjoint case.

As shown in~\cite{Aza17}, the structure of the space of resonance vectors is related to the behaviour of eigenvalues of~$H_s$ as they cross a real threshold value $z = \lambda$. 
For~example, assume that an eigenvalue $\lambda$ of~$H_{r_\lambda}$, for $r_\lambda\in\mbR$, is simple. 
Then, provided that~$\lambda$ is not also an eigenvalue of~$H_0$, the dimension of the root space of $R_\lambda(H_0)V$ corresponding to the eigenvalue $- r_\lambda^{-1}$ tells us, in particular, whether the eigenvalue $z$ of $H_s$ stops instantaneously at~$\lambda$, that is, whether ${dz}/{ds}$ is zero at $s = r_\lambda,$ and if so, in which direction it goes after stopping. 
That is, for a simple eigenvalue~$z$ of~$H_s$, the dimension of the root space is equal to the order of~$s$ as a critical point of~$z$. 
This result is generalised to the non-self-adjoint case in Section~\ref{S: order of e.path}.
We define the order of an eigenpath for~$H_s$, establish various characterisations, and show that it exceeds 1 iff~$z$ is a branching point of the eigenvalue~$-s^{-1}$ of~\eqref{F: Rz(H_0)V}. 
In particular, we prove the following theorem, in which $\bfA_{z}(H_{s},V)$ denotes a nilpotent operator associated to the space of resonance vectors.

\begin{thm} \label{T: criteria for splitting}
Suppose $z\lra s$, that is, $-s^{-1}$ is an eigenvalue of~$R_z(H_0)V$, or equivalently $z$ is an isolated eigenvalue of~$H_s = H_0 + sV$. 
Consider~$z$ as a function of~$s$ in a neighbourhood of some point $s_0$. 
Let~$\phi(s)$ be an eigenvector function corresponding to~$z(s)$. 
Assume~$z_0:=z(s_0)$ is a simple eigenvalue. 
Then the following assertions are equivalent:  
\begin{enumerate}
  \item the eigenpath~$\phi(s)$ has order~$\geq 2$ at~$s_0,$ 
  \item $\scal{\phi^*,V\phi(s_0)} = 0$ 
     for some eigenvector~$\phi^*$ of $H_{\bar s_0}$ corresponding to the eigenvalue $\bar z_0$, such that $\scal{\phi^*,\phi(s_0)}\neq 0,$
  \item $z'(s_0) = 0,$
  \item $(H_{s_0} - z_0)\phi'(s_0) = - V\phi(s_0),$ 
  \item the vector~$\phi(s_0)$ belongs to the image of $\bfA_{z_0}(H_{s_0},V)$, 
  \item $\bfA_{z_0}(H_{s_0},V) \phi'(s_0) = \phi(s_0),$
  \item $z_0$ is a branching point of the inverse function $s(z).$
  \end{enumerate}
\end{thm}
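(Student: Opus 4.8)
The plan is to prove the theorem by establishing a cycle of implications, exploiting the fact that the simplicity of $z_0$ gives us a holomorphic eigenvector function $\phi(s)$ and a holomorphic conjugate eigenvector function $\phi^*(s)$ with $\scal{\phi^*(s),\phi(s)} \equiv 1$ (after normalisation), which exists precisely by the semisimplicity characterisation recalled in Section~\ref{S: semisimple}. The backbone is the eigenvalue equation $(H_s - z(s))\phi(s) = 0$. Differentiating in $s$ and using $H_s' = V$ yields
\[
  V\phi(s) + (H_s - z(s))\phi'(s) = z'(s)\phi(s).
\]
Pairing this against $\phi^*(s)$ and using $\scal{\phi^*(s),(H_s - z(s))\psi} = \scal{(H_{\bar s} - \bar z(s))\phi^*(s),\psi} = 0$ for any $\psi$ in the domain, together with $\scal{\phi^*(s),\phi(s)} = 1$, gives the Feynman--Hellmann-type formula $z'(s) = \scal{\phi^*(s),V\phi(s)}$. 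This immediately yields the equivalence of (ii) and (iii) at $s_0$ (choosing $\phi^* = \phi^*(s_0)$ for one direction; for the other, noting that in the simple case all admissible $\phi^*$ are scalar multiples of $\phi^*(s_0)$, so the value of $\scal{\phi^*,V\phi(s_0)}/\scal{\phi^*,\phi(s_0)}$ is independent of the choice). The same differentiated identity shows (iii) $\Leftrightarrow$ (iv): $z'(s_0) = 0$ is equivalent to $(H_{s_0}-z_0)\phi'(s_0) = -V\phi(s_0)$.

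Next I would connect these to the order of the eigenpath and to the nilpotent operator $\bfA_{z_0}(H_{s_0},V)$. By the definition of order of an eigenpath established earlier in Section~\ref{S: order of e.path}, order $\geq 2$ at $s_0$ should mean precisely that $\phi'(s_0)$ (or the first Taylor correction to $\phi$) lies in a certain subspace forcing $z$ to vanish to higher order; concretely, I expect the definition to make (i) $\Leftrightarrow$ (iii) essentially tautological, or to require only the Feynman--Hellmann identity plus one more differentiation. For (iv) $\Leftrightarrow$ (v) $\Leftrightarrow$ (vi): recall from Section~\ref{S: Rz(N_v)} that the projection onto the space of resonance vectors equals the residue of $R_z(H_s)$ (in the coupling variable) times $V$, and $\bfA_{z_0}(H_{s_0},V)$ is the associated nilpotent. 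Equation (iv) says $V\phi(s_0) \in \im(H_{s_0}-z_0)$, i.e.\ $V\phi(s_0)$ is annihilated by the Riesz projection of $H_{s_0}$ at $z_0$; unwinding the relation between that projection, the residue, and $\bfA_{z_0}$ should translate this into "$\phi(s_0) \in \im\bfA_{z_0}(H_{s_0},V)$", which is (v), and then (vi) is the statement that $\phi'(s_0)$ is a specific preimage — which follows by applying $H_{s_0}-z_0$ (or rather the partial inverse on the appropriate complement) to (iv) and matching with the structure of $\bfA_{z_0}$.

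Finally, (vii) $\Leftrightarrow$ (iii): the inverse relation $s = s(z)$ near $(z_0,s_0)$ is, by the implicit/inverse function theorem for the holomorphic equation $z = z(s)$, single-valued and holomorphic exactly when $z'(s_0) \neq 0$; if $z'(s_0) = 0$ then $z(s) - z_0$ vanishes to order $k \geq 2$ at $s_0$ and $s(z)$ has a genuine algebraic branch point of order $k$ there (a Puiseux expansion in $(z-z_0)^{1/k}$), which is the content of (vii). I expect the main obstacle to be the precise bookkeeping in the (iv)--(vi) block: one must carefully reconcile the definition of $\bfA_{z_0}(H_{s_0},V)$ from Section~\ref{S: Rz(N_v)} (as a nilpotent on resonance vectors, built from Laurent coefficients of $R_z(H_s)$ in $s$) with the eigenvector data $\phi,\phi'$, and in particular verify that in the simple-eigenvalue case $\bfA_{z_0}$ has the minimal nontrivial Jordan structure so that $\im\bfA_{z_0}$ is spanned by $\phi(s_0)$ precisely under condition (iv); everything else is a short computation off the Feynman--Hellmann identity.
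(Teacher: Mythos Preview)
Your treatment of the equivalences (i)--(iv) and (vii) is essentially the paper's own: differentiate the eigenvalue equation, pair with the conjugate vector to obtain the Feynman--Hellmann identity $z'(s_0)=\scal{\phi^*,V\phi(s_0)}$, and invoke the inverse function theorem for (vii). The paper packages these as Lemma~\ref{L: chi has order >= k}. Your route (iv) $\Rightarrow$ (vi) via a ``partial inverse'' is also what the paper does (Lemma~\ref{L: this is enough}): rewrite (iv) as $(1-vR_{z_0}(H_{s_0+v})V)\phi'(s_0)=-R_{z_0}(H_{s_0+v})V\phi(s_0)$ and take the contour integral in~$v$ to extract $\bfA_{z_0}\phi'(s_0)=\phi(s_0)$. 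Since (vi) $\Rightarrow$ (v) is trivial, this gives the forward chain.

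The genuine gap is (v) $\Rightarrow$ (ii), and the Riesz-projection idea does not close it. You correctly note that $E(z_0,H_{s_0})V\phi(s_0)=0$ is a reformulation of (ii), but $E(z_0,H_{s_0})$ is a contour integral in the \emph{energy} variable while $\bfA_{z_0}(H_{s_0},V)$ comes from the Laurent expansion in the \emph{coupling} variable; there is no direct ``unwinding'' from one to the other, and in particular $E$ has rank~$1$ here whereas $P_{z_0}$ and $\bfA_{z_0}$ live on the possibly larger resonance space. The paper instead uses the adjoint identity $\scal{f,V\bfA_{z_0}(H_{s_0},V)g}=\scal{\bfA_{\bar z_0}(H_{\bar s_0},V)f,Vg}$ established in~\eqref{F: P*=Q and A*=B}: if $\phi(s_0)=\bfA_{z_0}\chi$ for some $\chi$, then
\[
  \scal{\phi^*,V\phi(s_0)}=\scal{\phi^*,V\bfA_{z_0}\chi}=\scal{\bfA_{\bar z_0}\phi^*,V\chi}=0,
\]
since $\phi^*$ is an eigenvector of $H_{\bar s_0}$, hence a resonance vector of order~$1$, and $\bfA_{\bar z_0}$ annihilates such vectors. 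This two-line computation (Lemma~\ref{L: last lemma}, case $k=2$) is the missing ingredient in your cycle.
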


The equivalence (iii) $\iff$ (vii) is of course a well-known property of holomorphic functions. 

The structure of the projection onto the space of resonance vectors is considered in more detail in Section~\ref{S: structure of Pz}. We establish its decomposition into a sum of projections corresponding to cycles of resonance points. This provides a natural Jordan basis of resonance vectors, in which we consider a Schmidt representation of the resonance projection.

Section~\ref{S: tangent high order} demonstrates a connection between order and tangency to the so-called resonance set. The resonance set is by definition the algebraic variety of operators, obtained from $H_{s_0}$ by relatively compact perturbations of the form~$vW$ with $W = W^*$ and $v\in\mbC$, which share a given eigenvalue $z_0$.
For a simple eigenvalue~$z_0$ of~$H_{s_0}$, it is shown that a path of operators of the form $H_{s_0} + vW$ is tangent to the resonance set if and only if it has a corresponding eigenpath of order $\geq 2$. 

Finally, Section~\ref{S: example} is a short remark on the isospectrality solutions to Lax's equation in light of these results.

\section{Preliminaries}\label{S: prelim}
A $C^1$-function $f$ acting from an open subset~$G$ of~$\mbC$ to a Hilbert space $\clH$ 
is \emph{anti-holomorphic}, if the limit 
$
  \bar \partial_{s}f(s) := \lim_{h \to 0} {\bar h}^{-1}(f(s+h) - f(s))
$
exists at every $s \in G.$ 
If~$f \colon G \to \clH$ is anti-holomorphic and $g \colon G \to \clH$ is holomorphic, then the scalar product 
$
  \scal{f,g}
$
is holomorphic and 
$
  \partial_s\scal{f,g} = \scal{\bar \partial_{s} f,g} + 
  \scal{f,\partial_s g}. 
$
If $f(s)$ is anti-holomorphic in $G,$ then $f(\bar s)$ is holomorphic in $G^* = \{s \in \mbC \colon \bar s \in G\},$ 
which shows the similarity of properties of holomorphic and anti-holomorphic functions. 
For a~$\mbC$-valued holomorphic function $z(s),$ 
$\partial_s^j z(s) = 0$ if and only~if~$\bar\partial^j_{s} \bar z(s) = 0, \ j=1,2,\ldots$

For a closed operator $T$ on $\clH$ we denote by $\rho(T)=\{z\in\mbC \colon T-z \ \text{has bounded inverse}\}$ the resolvent set of~$T,$ and for the resolvent we use the notation  
$
  R_z(T) = (T - z)^{-1}.
$
By $\sigma(T) = \mbC\setminus\rho(T)$ we denote the spectrum of~$T,$ 
and by $\sigma_d(T)$ the discrete spectrum of~$T.$ 
That is, $z \in \sigma_d(T)$ iff $z \in \sigma(T)$ and $T-z$ is Fredholm, the latter means $\im(T-z)$ is closed and both $\ker(T-z), \coker(T-z)$ are finite-dimensional. 
For~$z \in \sigma_d(T),$ the \emph{geometric} and \emph{algebraic multiplicities} of~$z$ are the dimensions of, respectively, the \emph{eigenspace} $\clV(z,T) := \ker (T-z)$ and the \emph{root space} $\bigcup_{k=1}^\infty \ker (T-z)^k.$ 
Elements of the root space are called \emph{root vectors} or generalised eigenvectors.
If the operator $T$ is normal, then the root space and eigenspace coincide.
The~essential spectrum of $T$ is $\sigma_{ess}(T):=\sigma(T) \setminus \sigma_d(T).$ 
Other definitions of essential spectrum exist and this one is the largest of all,
but for the operators considered here -- relatively compact perturbations of self-adjoint operators, all of these definitions coincide. 

If a family of closed operators $T(v)$ is holomorphic at $v=0$ in the general sense that $v\mapsto R_z(T(v))$ is bounded-holomorphic for some $z\in\rho(T(0))$, then in a neighbourhood of any finite subset of~$\sigma_d(T_0)$ there are, counting multiplicities, a constant finite number of eigenvalues of~$T(v)$ for all small enough~$v$. 
The number of distinct eigenvalues is also constant except for a discrete set of exceptional points.
Away from such exceptional points the eigenvalues are holomorphic functions of~$v$, which constitute the branches of one or more multivalued analytic functions. 
These functions can have only algebraic singularities and can therefore be represented by branches of one or several Puiseux series.
The same can be said of the corresponding eigenprojections, eigennilpotents, and therefore also eigenvectors.

\subsection{The affine space \texorpdfstring{$\clA$}{A}}
Throughout this paper~$H_0$ will denote a self-adjoint operator on a complex separable Hilbert space $\clH.$
We denote by $\clA_0$ the real vector space of all bounded self-adjoint $H_0$-compact operators~$V,$
the latter means that for some, and thus for any,~$z$ from the resolvent set $\rho(H_0),$ the operator~\eqref{F: Rz(H_0)V} is compact.

The results of this paper hold for certain unbounded perturbations too, but for simplicity we focus on bounded perturbations.
Our strategy will be to deal with unbounded perturbations at the end of sections in various brief remarks, whose purpose is to describe the most significant necessary adjustments.

By~$\mbC\clA_0$ we denote the complexification of $\clA_0.$
By Weyl's theorem (see e.g.~\cite{ReeSim4}), the operators from
\[
 \clA:=H_0+\mbC\clA_0 
\]
have the same essential spectrum, denoted $\sigma_{ess}(\clA).$ 
For a fixed perturbation $V \in \clA_0,$ we write $H_s = H_0 +sV,$ for $s \in \mbC.$ 
Isolated eigenvalues of~$H_s$ we denote by~$z_\tau(s),$ $\tau=1,2,\ldots.$
If we work with a fixed eigenvalue function of~$H_s,$ we may write~$z(s).$
A corresponding eigenvector we denote by~$\phi(s):$
\begin{equation*} \label{F: H(s)chi(s)=z(s)chi(s)}
  H_s \phi(s) = z(s) \phi(s).
\end{equation*} 

If $z \in \sigma_d(H_s),$
we say that~$s$ is a \emph{resonance point} corresponding to~$z.$
In what follows we shall often consider a particular value of the complex variable $s$ denoted~$s_0,$
and the corresponding value of $z(s)$ we denote~$z_0.$ 
We will be perturbing the operator $H_{s_0}$ in another direction $W \in \clA_0.$
In such a case we use $N_0$, instead of $H_{s_0}$, for an element of~$\clA$ and write 
\begin{equation*} 
  N_v = N_0 + v W.
\end{equation*}
Thus, $N_0$ is a not necessarily self-adjoint relatively compact perturbation of $H_0.$
By the second resolvent identity, any $W \in \clA_0$ is $N_0$-compact.
An eigenvalue function of $N_v$ we denote $z_\tau(v)$ or $z(v),$ 
and a corresponding eigenvector function by~$\phi(v):$ 
\begin{equation*} \label{F: N(v)phi(v)=z(v)phi(v)}
  N_v \phi(v) = z(v) \phi(v).
\end{equation*} 

The variables~$z$ and $v$ from the relation $z \in \sigma_d(N_v)$
are multi-valued functions of each other which can have branching points.

We note that splitting -- the situation in which a single eigenvalue splits into two or more distinct eigenvalues when perturbed, may occur with or without branching. 
The eigenvalues which arise from the splitting of a given eigenvalue~$z_0$ are said to belong to the $z_0$-group.

\begin{remark}
When it comes to unbounded perturbations, we will assume that the initial operator $H_0$ is semibounded. 
In this case if a symmetric perturbation~$V$ is relatively form-compact with respect to~$H_0,$ we may consider a decomposition of the form $V= F^*JF,$ where~$J$ is a bounded self-adjoint operator on an auxiliary Hilbert space~$\clK$ and $F\colon\clH\to\clK$ is a fixed operator which is $|H_0|^{1/2}$-compact (this can be taken as the definition of relative form-compactness, cf.~\cite[pp.~662--663]{Sim15}). 
We~will assume in addition that the operator~$F$ is closed.
It~is also convenient to choose~$F$ to have trivial kernel and cokernel, which can be achieved without loss of generality.

In this situation, we set $\clA_0 = \{V = F^*JF : J \text{ is bounded and self-adjoint} \},$ where $V$ is interpreted as the symmetric form $(f,g) \mapsto V[f,g] := \scal{Ff,JFg}$ defined on $\dom F.$
We define $T_z(H_0)$ to be the compact operator which is the closure of the product $FR_z(H_0)F^*$ and we study the spectral properties of the compact operator~\eqref{F: Tz(H_0)J} in place of~\eqref{F: Rz(H_0)V}. 

In case~$V$ is a $H_0$-compact bounded self-adjoint operator, the operators~\eqref{F: Rz(H_0)V} and~\eqref{F: Tz(H_0)J} share the same non-zero eigenvalues. 
Moreover, the eigenspaces satisfy 
\[ \clV(\sigma,T_z(H_0)J) = F\clV(\sigma,R_z(H_0)V) \] 
and the root spaces are related similarly.

An operator $H_s = H_0 + sV$ from the affine space $\clA$ is well-defined as a form-sum in the sense of the KLMN Theorem, rather than an operator-sum as in the Kato-Rellich Theorem.
Note that~$H_0$ corresponds to a semibounded form (see e.g.~\cite[Theorem~VI-2.7]{Kat80}) and each form-sum~$H_s$ is sectorial (\cite[Theorem~VI-1.33]{Kat80}) and hence corresponds to a sectorial operator.
It~is no longer true, as it is in the case of bounded perturbations, that the operators of~$\clA$ share a common domain, however this is true of their corresponding sectorial forms. 
In~remarks concerning unbounded perturbations to follow, we will use square brackets to distinguish a sectorial form $N_0[\cdot\,,\cdot]$ from its corresponding sectorial operator~$N_0\in\clA$    
and the common form-domain of operators from $\clA$ will be denoted $\dom [\clA].$ 

\end{remark}

\subsection{Vector spaces \texorpdfstring{$\Upsilon^k_{z_0}(N_0,W)$}{Upsilon}}\label{SS: Upsilon}
Let $N_0 \in \clA,$ $W\in\clA_0,$ and $N_v = N_0 + vW.$
If $z_0 \in \sigma_{d}(N_0),$ then 
for any $v\in \mbC$ such that $z_0 \in \rho(N_v),$ the number 
$ v^{-1} $
is an eigenvalue of 
\begin{equation} \label{F: Rz(Nv)W}
  R_{z_0}(N_v)W,
\end{equation} 
and for any such~$v$ 
\begin{equation}\label{F: e.v. = order 1}
  \clV(z_0,N_0) = \clV(v^{-1}, R_{z_0}(N_v)W).
\end{equation}
In other words, for any $v$ such that $z_0 \in \rho (N_v),$
a vector $\chi \in \clV(z_0,N_0)$ is a solution of the equation 
\begin{equation} \label{F: [1-vRW]k phi=0}
  \big[1 - v R_{z_0}(N_v)W\big]^k\chi = 0
\end{equation} 
with $k=1.$ 
Since the compact operator \eqref{F: Rz(Nv)W} is not necessarily normal, its root space corresponding to the eigenvalue $v^{-1}$ can be larger than the eigenspace.
The root space consists of~$\chi \in \clH$ which obey \eqref{F: [1-vRW]k phi=0} for some~$k\geq 1.$ 
For any $k \geq 1$ \eqref{F: [1-vRW]k phi=0} holds for any $v$ if it holds for some~$v$ with $z_0 \in \rho(N_v).$
A vector~$\chi$ which obeys \eqref{F: [1-vRW]k phi=0} we call a \emph{resonance vector} of the triple $(z_0,N_0,W),$ and the smallest of positive integers~$k$ obeying \eqref{F: [1-vRW]k phi=0} is the \emph{order} of~$\chi.$ 

For $z_0 \in \sigma_d(N_0),$ the vector space of resonance vectors 
of order~$\leq k$ we denote
$
  \Upsilon^k_{z_0}(N_0,W).
$
The \emph{order} $d$ of the triple $(z_0,N_0,W)$ is the smallest of positive integers $d$ such that 
\[
  \Upsilon^{d+1}_{z_0}(N_0,W) = \Upsilon^{d}_{z_0}(N_0,W).
\]
Since $v^{-1}$ is an eigenvalue of a compact operator, such a number $d$ exists. 
We write 
$
  \Upsilon_{z_0}(N_0,W) 
$
for $\Upsilon^{d}_{z_0}(N_0,W).$
From the fact that a linear operator and its adjoint have kernels of the same dimension, it follows that for any $k$ (c.f.~\cite[Lemma 3.1.4]{Aza16}),
\[
  \dim \Upsilon^k_{z_0}(N_0,W) = \dim \Upsilon^k_{\bar z_0}(N_0^*,W).
\]
If we study a particular triple $(z_0,N_0,W)$ then we use notation 
$
  n := \dim \Upsilon_{z_0}(N_0,W)
$
and 
$
  m := \dim \Upsilon^1_{z_0}(N_0,W). 
$
So,~$m$ is the geometric multiplicity of the eigenvalue~$z_0$ of $N_0$. 
However $n$ is not its algebraic multiplicity, although it is that of the eigenvalue~$v^{-1}$ of~\eqref{F: Rz(Nv)W}.

\begin{remark}\label{R: 1st unbdd pf}
In the situation for unbounded perturbations, every occurrence of $R_{z_0}(N_v)$ and~$W$ should be replaced by $T_{z_0}(N_v)$ and $J,$ where $W = F^*JF.$ 
This is a general rule when it comes to treating the unbounded case and for many things, including the upcoming section, it is the only difference.

Instead of~\eqref{F: e.v. = order 1}, it happens that 
\begin{equation}\label{F: e.v. = order 1 (unbdd)}
 F\clV(z_0,N_0) = \clV(v^{-1}, T_{z_0}(N_v)J).
\end{equation}
Otherwise the above definitions are unchanged, e.g.~resonance vectors of $(z_0,N_0,W)$,
which in this case belong to the auxiliary space~$\clK,$ 
are by definition root vectors of $T_{z_0}(N_v)J$.

For convenience we include a short proof of~\eqref{F: e.v. = order 1 (unbdd)}:
For $z_0 \in \rho(N_v)$ such that $z_0 \in \sigma_d(N_0),$ suppose that $N_0\chi = z_0\chi.$ 
Then 
\[
 (N_v - z_0)[f,\chi] = vW[f,\chi] = v\scal{Ff,JF\chi}, \quad \forall f \in \dom [\clA].
\]
In particular, for any $f = R_{\bar z_0}(N_v^*)g\in\dom N_v^*$ (which we note is a core of the form $N_v^*[\cdot\,,\cdot]$), we find
\[
 \scal{g,\chi} = v\scal{FR_{\bar z_0}(N_v^*)g,JF\chi}, \quad \forall g \in \clH.
\]
Thus $\chi = v(FR_{\bar z_0}(N_v^*))^*JF\chi$ and hence
\[
 F\chi = vF(FR_{\bar z_0}(N_v^*))^*JF\chi = vT_{z_0}(N_v)JF\chi.
\]
Each of the above implications is reversible, completing the proof.
\end{remark}

\section{Laurent coefficients of \texorpdfstring{$R_{z_0}(N_v)$}{Rz0(Nv)}}\label{S: Rz(N_v)}
Most of the material of this section can be found in~\cite[Section 3]{Aza16},
but this presentation has some new elements and is shorter. 
We consider the resolvent~$R_{z_0}(N_v)$ as a meromorphic function of~$v$. 
That this is possible can be seen as a result of the analytic Fredholm alternative (see e.g.~\cite[Theorem~VI.14]{ReeSim1}) 
and the resolvent identity
\begin{equation}\label{F: res id}
 R_{z_0}(N_v) = (1 + (v-u)R_{z_0}(N_u)W)^{-1}R_{z_0}(N_u).
\end{equation}
\begin{lemma} \label{L: orders equal}
Let $z_0 \in \sigma_d(N_0).$ 
The meromorphic functions $R_{z_0}(N_v)$ and $R_{z_0}(N_v)W$ have a pole of the same order at $v=0.$ 
\end{lemma}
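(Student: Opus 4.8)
The plan is to compare the Laurent expansions at $v=0$ of $R_{z_0}(N_v)$ and $R_{z_0}(N_v)W$ using the resolvent identity~\eqref{F: res id} with $u=0$, which gives
\[
  R_{z_0}(N_v) = (1 + vR_{z_0}(N_0)W)^{-1}R_{z_0}(N_0),
\]
valid for $v$ in a punctured neighbourhood of $0$. Multiplying on the right by $W$,
\[
  R_{z_0}(N_v)W = (1 + vR_{z_0}(N_0)W)^{-1}R_{z_0}(N_0)W.
\]
So both functions are obtained by applying the same meromorphic operator-valued factor $(1+vR_{z_0}(N_0)W)^{-1}$ — whose pole order at $v=0$ is by definition the order $d$ of the triple $(z_0,N_0,W)$, i.e. the size of the largest Jordan block of the eigenvalue $v^{-1}$ of the compact operator $R_{z_0}(N_v)W$ — to the two fixed (holomorphic at $v=0$) operators $R_{z_0}(N_0)$ and $R_{z_0}(N_0)W$ respectively. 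Thus each of the two functions has a pole at $v=0$ of order at most $d$, and I must show that in both cases the order is exactly the same integer, call it $p$.

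First I would fix notation for the Laurent expansion. Write $B := R_{z_0}(N_0)W$, so $(1+vB)^{-1} = \sum_{k\ge -d} v^k C_k$ near $v=0$, where $C_{-d},\dots,C_{-1}$ are finite-rank operators built from the eigenprojection and eigennilpotent of $B$ at the eigenvalue $v^{-1}$; in particular the leading coefficient $C_{-d}$ is (a power of) the eigennilpotent composed with the Riesz projection and is nonzero. Then the most singular coefficient of $R_{z_0}(N_v)$ is $C_{-d}R_{z_0}(N_0)$ and that of $R_{z_0}(N_v)W$ is $C_{-d}B = C_{-d}R_{z_0}(N_0)W$. The order of the pole of $R_{z_0}(N_v)$ is the largest $j$ with $C_{-j}R_{z_0}(N_0)\neq 0$, and similarly with $W$ appended. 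Since $R_{z_0}(N_0)$ is a bijection of $\clH$ (as $z_0\in\rho(N_0)$ would be false — rather $z_0\in\sigma_d(N_0)$), I should be careful: $R_{z_0}(N_0)$ does not exist as a bounded operator. The correct reading is that the identity~\eqref{F: res id} is applied with a reference point $u$ where $z_0\in\rho(N_u)$, e.g. any small $u\neq 0$; then $R_{z_0}(N_u)$ is a bijection of $\clH$, so $C_k R_{z_0}(N_u) = 0 \iff C_k = 0$ for each $k$, which immediately gives that the pole order of $R_{z_0}(N_v)$ equals the pole order of $(1+(v-u)R_{z_0}(N_u)W)^{-1}$. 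For the function with $W$ appended, one direction is trivial ($C_kR_{z_0}(N_u)W=0$ whenever $C_k=0$), so its pole order is $\le$ that of $(1+\cdots)^{-1}$.

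The main obstacle, then, is the reverse inequality for $R_{z_0}(N_v)W$: I must rule out that appending $W$ kills the leading Laurent coefficient, i.e. show $C_{-p}R_{z_0}(N_u)W\neq 0$ where $p$ is the pole order of $(1+(v-u)R_{z_0}(N_u)W)^{-1}$. The key point is that $C_{-p}$, the leading coefficient, has image contained in the root space $\Upsilon_{z_0}(N_0,W)$ of resonance vectors — and every resonance vector $\chi$ satisfies, from~\eqref{F: [1-vRW]k phi=0}, $\chi = vR_{z_0}(N_v)W\chi$, hence lies in the image of $R_{z_0}(N_v)W = R_{z_0}(N_u)W\cdot(\text{bijection})$. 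So the range of $C_{-p}R_{z_0}(N_u)$, which equals the range of $C_{-p}$, is a nonzero subspace of the range of $R_{z_0}(N_u)W$; combined with the fact (from the structure of $(1+vB)^{-1}$ for compact $B$) that the kernel of $R_{z_0}(N_u)W$ meets this range trivially — equivalently, that $C_{-p}$ already factors through $R_{z_0}(N_u)W$ — I get $C_{-p}R_{z_0}(N_u)W\neq 0$. I would phrase this last step cleanly by noting that on the root space, the operators $R_{z_0}(N_v)$ and $R_{z_0}(N_v)W$ differ by the bounded invertible operator $N_0-z_0$ acting within that finite-dimensional space: indeed for a resonance vector $\chi$ one has $W\chi=(N_0-z_0)R_{z_0}(N_0)W\chi$ in the appropriate sense, which is exactly the relation, already recorded in the paper's discussion of~\eqref{F: e.v. = order 1}, obtained by multiplying eigenvalue equations by $H_0-z$ and $R_z(H_0)$. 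Hence the Laurent tails of $R_{z_0}(N_v)$ and $R_{z_0}(N_v)W$, which both take values in (operators onto) the root space, are intertwined by an invertible operator and must have the same order. This is the cleanest route and I would write the proof around this intertwining observation rather than around explicit Laurent coefficient bookkeeping.
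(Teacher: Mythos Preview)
Your proposal has a genuine gap at the crucial step. The claim that ``on the root space, the operators $R_{z_0}(N_v)$ and $R_{z_0}(N_v)W$ differ by the bounded invertible operator $N_0-z_0$ acting within that finite-dimensional space'' is false: $N_0-z_0$ annihilates the eigenspace $\clV(z_0,N_0)$, so it is certainly not invertible on the root space (and $R_{z_0}(N_0)$, which you also invoke in the same sentence, does not exist since $z_0\in\sigma_d(N_0)$). The earlier attempt --- arguing that because the range of $C_{-p}$ lies in the range of $R_{z_0}(N_u)W$ one gets $C_{-p}R_{z_0}(N_u)W\neq 0$ --- is also not a valid inference: containment of ranges says nothing about whether post-composition by $R_{z_0}(N_u)W$ kills $C_{-p}$.

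The paper's argument avoids all of this with a one-line use of the second resolvent identity in the form
\[
  R_{z_0}(N_v) = R_{z_0}(N_u) + (u-v)\,R_{z_0}(N_v)W\,R_{z_0}(N_u),
\]
for any fixed $u$ with $z_0\in\rho(N_u)$. Since $R_{z_0}(N_u)$ and $(u-v)$ are holomorphic at $v=0$, this immediately gives that the pole order of $R_{z_0}(N_v)$ is at most that of $R_{z_0}(N_v)W$; the reverse inequality is trivial. If you want to stay within your own framework, the clean substitute for your broken intertwining is the algebraic identity
\[
  (1+(v-u)B)^{-1}B = \frac{1}{u-v}\big[(1+(v-u)B)^{-1}-1\big],
\]
with $B=R_{z_0}(N_u)W$, which shows directly that $R_{z_0}(N_v)W=(1+(v-u)B)^{-1}B$ has the same pole order as $(1+(v-u)B)^{-1}$ --- but this is still more work than the paper's route.
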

\begin{proof} 
The question is why the order of the pole $v=0$ of $R_{z_0}(N_v)$ cannot decrease after multiplication by~$W.$
This follows from the second resolvent identity 
\begin{equation*} \label{F: R(v)=R(u)+(u-v)R(v)WR(u)}
  R_{z_0}(N_v) = R_{z_0}(N_u) + (u-v)R_{z_0}(N_v)W R_{z_0}(N_u). 
\end{equation*}
\end{proof}

Here and below, we assume that $z_0\in\sigma_d(N_0).$ 
Let 
\begin{equation*} \label{F: Laurent for Rz0(Nv)}
  \begin{split}
  R_{z_0}(N_v) & = \sum_{j=-d}^\infty v^j K_{j+1}(z_0,N_0,W)  \\
               & = v^{-d} K_{-d+1}  + \ldots + v^{-2} K_{-1} + v^{-1} K_0 + K_1 + v K_2 + \ldots
  \end{split}
\end{equation*}
be the Laurent expansion, so 
\begin{equation} \label{F: Kj = oint R vj}
  K_j(z_0,N_0,W) = \frac 1{2\pi i} \oint _{C(0)} R_{z_0}(N_v)v^{-j}\,dv,
\end{equation}
and $K_{-d+1}$ is non-zero. 
The number $d$ appearing here is equal to the order of the triple $(z_0,N_0,W)$ by Lemma~\eqref{L: orders equal}.

\begin{lemma} \label{L: K(-k)WK(-j)=K(-k-j)}
For $k,j\geq 0,$ 
\begin{equation*} \label{F: K(-k)WK(-j)=K(-k-j)}
  K_{-k}W K_{-j} = K_{-k-j}.
\end{equation*}
\end{lemma}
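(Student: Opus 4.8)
The plan is to exploit the resolvent identity \eqref{F: res id}, or more directly the second resolvent identity $R_{z_0}(N_v) = R_{z_0}(N_u) + (u-v)R_{z_0}(N_v)W R_{z_0}(N_u)$, rewritten with $u=0$ replaced by a formal manipulation of Laurent series. The cleanest route is to use the first resolvent equation in the form
\[
  R_{z_0}(N_v) - R_{z_0}(N_w) = (w - v)\, R_{z_0}(N_v)\, W\, R_{z_0}(N_w),
\]
valid for all $v,w$ in a punctured neighbourhood of $0$ with $z_0\in\rho(N_v)\cap\rho(N_w)$, and then substitute the Laurent expansions of $R_{z_0}(N_v)$ and $R_{z_0}(N_w)$ as functions of $v$ and $w$ separately. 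First I would expand both sides as a double Laurent series in $v$ and $w$ and compare coefficients of $v^{-k}w^{-j}$ for $k,j\geq 0$. The left-hand side contributes only when one of the two indices is the one being read off, so its coefficient of $v^{-k}w^{-j}$ (with $k,j\geq 1$, or one of them $=0$) vanishes unless $k=0$ or $j=0$; the right-hand side, because of the factor $(w-v)$, shifts indices by one in $v$ or in $w$, producing exactly the bilinear terms $K_{\bullet}WK_{\bullet}$.

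The key algebraic step is the bookkeeping: writing $R_{z_0}(N_v)=\sum_{p} v^p K_{p+1}$ and $R_{z_0}(N_w)=\sum_{q} w^q K_{q+1}$, the right-hand side becomes $\sum_{p,q}(w-v)v^p w^q\, K_{p+1} W K_{q+1}$. Matching the coefficient of $v^{-k-1}w^{-j}$ coming from the $-v\cdot v^p w^q$ term against the corresponding coefficient on the left (which is $0$ once $k+1\geq 1$ and $j\geq 1$, i.e. always in the range of interest), I would obtain a recursion of the shape $K_{-k}WK_{-j} = K_{-(k+1)}WK_{-(j-1)}$ together with a base identity tying $K_{-k}WK_{0}$-type or $K_{-k}WK_{-0}$-type terms down. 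Iterating the recursion in one index collapses the double-indexed family onto the diagonal relation $K_{-k}WK_{-j}=K_{-(k+j)}WK_{0}$, and then a second short argument — using that the principal part of $R_{z_0}(N_v)$ is $v^{-1}$ times a nilpotent-plus-rank piece, or simply reading off the coefficient identity $K_{-k}WK_0 = K_{-k}$ from the $(w-v)$-expansion with $j=0$ — finishes the identification $K_{-k}WK_{-j}=K_{-k-j}$.

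The main obstacle I anticipate is purely combinatorial rather than conceptual: keeping the index shifts from the factor $(w-v)$ straight, and correctly handling the boundary cases $k=0$ or $j=0$ where the Laurent series of the left-hand side does contribute (its regular part) and must be matched consistently — in particular verifying the normalisation $K_0 W K_0 = K_0$ and $K_{-k}WK_0=K_{-k}$, which says $K_0$ acts as a unit for this product on the principal part. An alternative that sidesteps some of this is to integrate: using the contour formula \eqref{F: Kj = oint R vj}, write $K_{-k}WK_{-j}$ as a double contour integral $\tfrac{1}{(2\pi i)^2}\oint\oint R_{z_0}(N_v)\,W\,R_{z_0}(N_w)\,v^{k}w^{j}\,dv\,dw$, substitute the resolvent identity to replace $R_{z_0}(N_v)WR_{z_0}(N_w)$ by $(w-v)^{-1}(R_{z_0}(N_v)-R_{z_0}(N_w))$, and evaluate by residues; the pole at $v=w$ then produces a single contour integral of $R_{z_0}(N_v)v^{k+j}$, which is exactly $K_{-(k+j)}$. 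I would likely present this contour-integral version as the main argument since it makes the index arithmetic transparent and avoids the fiddly boundary cases, relegating the series-comparison viewpoint to a remark.
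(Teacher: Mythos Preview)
Your contour-integral alternative is precisely the paper's proof: express $K_{-k}WK_{-j}$ as the double contour integral, apply the second resolvent identity to replace $R_{z_0}(N_v)WR_{z_0}(N_u)$ by $(u-v)^{-1}\bigl(R_{z_0}(N_v)-R_{z_0}(N_u)\bigr)$, and evaluate by choosing $C_v(0)$ strictly inside $C_u(0)$ so that the $R_{z_0}(N_u)$-summand is holomorphic in $v$ and vanishes, while the $R_{z_0}(N_v)$-summand collapses via Cauchy's formula $\oint_{C_u}(u-v)^{-1}u^j\,du=2\pi i\,v^j$. The only refinement over your sketch is to make this contour nesting explicit rather than speaking of ``the pole at $v=w$,'' since both resolvents are singular at the origin and it is the nesting that kills the second summand cleanly.
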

\begin{proof}
Using~\eqref{F: Kj = oint R vj} we have 
\[
  (E) := (2\pi i)^2 K_{-k}W K_{-j} = \oint_{C_u(0)}\oint_{C_v(0)} R_{z_0}(N_v)W R_{z_0}(N_u) v^{k}u^{j}\,dv\,du,
\]
where $C_u(0)$ and $C_v(0)$ are contours enclosing zero. 
We choose $C_v(0)$ to be strictly inside $C_u(0).$ 
Applying the second resolvent identity gives
\[
  (E) = \oint_{C_u(0)}\oint_{C_v(0)} \big[R_{z_0}(N_v)- R_{z_0}(N_u)\big] (u-v)^{-1}v^{k}u^{j}\,dv\,du.
\]
With the choice of the contours, the summand 
which contains the factor $R_{z_0}(N_u)$ is a holomorphic function of $v$ on and inside $C_v(0),$
and thus its integral vanishes. Hence, Cauchy's theorem gives 
\begin{align*}
      (E) & = \oint_{C_v(0)}R_{z_0}(N_v)v^{k} \oint_{C_u(0)}  (u-v)^{-1}u^{j}\,du\,dv \\
          & = 2\pi i \oint_{C_v(0)}R_{z_0}(N_v)v^{k} v^j \,dv = (2\pi i)^2 K_{-k-j}. 
\end{align*} 
\end{proof}

Let 
\begin{equation} \label{F: Pz=K0*W, Qz=W*K0}
  P_{z_0}(N_0,W) = K_0(z_0,N_0,W)W, \quad Q_{z_0}(N_0,W) = W K_0(z_0,N_0,W),
\end{equation}
and for $k>0$
\begin{equation} \label{F: Ak=K(-k)W and Bk=WK(-k)}
  \bfA^k_{z_0}(N_0,W) = K_{-k}(z_0,N_0,W)W, \quad \bfB^k_{z_0}(N_0,W) = W K_{-k}(z_0,N_0,W).
\end{equation}
Lemma~\ref{L: K(-k)WK(-j)=K(-k-j)} implies that $P_{z_0}(N_0,W)$
and $Q_{z_0}(N_0,W)$ are idempotents, and,
assuming the convention that $\bfA^0_{z_0} = P_{z_0},$ 
$\bfB^0_{z_0} = Q_{z_0},$
that for $k,j\geq 0$ we have $\bfA^k_{z_0}\bfA^j_{z_0} = \bfA^{k+j}_{z_0}$ 
and $\bfB^k_{z_0}\bfB^j_{z_0} = \bfB^{k+j}_{z_0},$
which justifies the notation.

From the definition \eqref{F: Kj = oint R vj} of $K_j$ it follows that 
\[
  (K_j(z_0,N_0,W))^* = K_j(\bar z_0,N_0^*,W). 
\]
This implies the equalities 
\begin{equation} \label{F: P*=Q and A*=B}
  (P_{z_0}(N_0,W))^* = Q_{\bar z_0}(N_0^*,W), \quad
  (\bfA_{z_0}(N_0,W))^* = \bfB_{\bar z_0}(N_0^*,W),
\end{equation} 
and the following equality which will be frequently used without reference: for $f, g \in \clH$,
\[
  \scal{\bfA_{\bar z_0}(N^*_0,W)f,W g} = \scal{f,W\bfA_{z_0}(N_0,W)g}. 
\]
By Lemma~\ref{L: K(-k)WK(-j)=K(-k-j)}, for $k\geq 0$ 
\begin{equation} \label{F: K(-k)=K0WK(-k)}
  \begin{split}
      K_{-k} = \bfA_{z_0}^k K_0 = K_0 \bfB_{z_0}^k. 
  \end{split}
\end{equation}

\bigskip
If $j<0$, then $\oint_{C_u(0)}  (u-v)^{-1}u^{j}\,du = 0$
for $v$ inside $C_u(0),$ because the residues sum to zero in this case, and so the argument from the proof of Lemma~\ref{L: K(-k)WK(-j)=K(-k-j)} yields
\begin{lemma} \label{L: K(k)WK(m)=0} If one of $k$ or $j$ is $\geq 0$ and the other $<0,$ then 
$K_{-k}WK_{-j} = 0.$
\end{lemma}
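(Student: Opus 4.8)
The plan is to reuse, almost verbatim, the double-contour computation from the proof of Lemma~\ref{L: K(-k)WK(-j)=K(-k-j)}, keeping careful track of which of the two inner integrals now vanishes for sign reasons. Using~\eqref{F: Kj = oint R vj} I would write
\[
 (2\pi i)^2\, K_{-k}WK_{-j} = \oint_{C_u(0)}\oint_{C_v(0)} R_{z_0}(N_v)W R_{z_0}(N_u)\, v^{k}u^{j}\,dv\,du,
\]
with $C_v(0)$ chosen strictly inside $C_u(0)$, and then apply the second resolvent identity to replace $R_{z_0}(N_v)W R_{z_0}(N_u)$ by $\big[R_{z_0}(N_v)-R_{z_0}(N_u)\big](u-v)^{-1}$, splitting the integral into a term carrying the factor $R_{z_0}(N_v)$ and a term carrying the factor $R_{z_0}(N_u)$.

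Before treating the two terms I would record a reduction: by the adjoint relation $(K_j(z_0,N_0,W))^* = K_j(\bar z_0,N_0^*,W)$ and $W^*=W$, we have $(K_{-k}WK_{-j})^* = K_{-j}(\bar z_0,N_0^*,W)\,W\,K_{-k}(\bar z_0,N_0^*,W)$, in which the roles of $k$ and $j$ are interchanged; hence it suffices to treat the case $k\geq 0$, $j<0$, the case $k<0$, $j\geq 0$ following by applying the former to the triple $(\bar z_0,N_0^*,W)$ and taking adjoints. (Alternatively one runs the argument directly in the second case too, at the cost of a short residue computation at $v=0$.) So assume $k\geq 0$ and $j<0$. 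In the term carrying $R_{z_0}(N_v)$, integrate in $u$ first: since $C_v(0)$ lies inside $C_u(0)$, the only singularities of $(u-v)^{-1}u^{j}$ inside $C_u(0)$ are at $u=v$ and, because $j<0$, at $u=0$; as this rational function decays at least like $u^{-2}$ at infinity, its residue at infinity is zero, so the two finite residues sum to zero and $\oint_{C_u(0)}(u-v)^{-1}u^{j}\,du=0$ — this term vanishes. In the term carrying $R_{z_0}(N_u)$, integrate in $v$ first: since $u$ lies outside $C_v(0)$ and $k\geq 0$, the integrand $R_{z_0}(N_u)(u-v)^{-1}v^{k}u^{j}$ is holomorphic in $v$ on and inside $C_v(0)$, so its $v$-integral vanishes as well. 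Therefore the whole integral is $0$, i.e.\ $K_{-k}WK_{-j}=0$.

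There is no real obstacle here; the argument is routine once the sign bookkeeping is organised, and the only point requiring a moment's care is the vanishing of $\oint_{C_u(0)}(u-v)^{-1}u^{j}\,du$ for $j<0$ — exactly the observation already flagged in the paragraph preceding the statement — which I would justify via the residue at infinity (equivalently, by expanding $(u-v)^{-1}$ in powers of $u$ for $|u|$ large on $C_u(0)$ and noting the absence of a $u^{-1}$ term).
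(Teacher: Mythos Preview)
Your proof is correct and follows essentially the same approach as the paper: the paper's entire argument is the observation, stated just before the lemma, that for $j<0$ the integral $\oint_{C_u(0)}(u-v)^{-1}u^{j}\,du$ vanishes because the residues sum to zero, after which the computation of Lemma~\ref{L: K(-k)WK(-j)=K(-k-j)} goes through verbatim. Your explicit reduction of the case $k<0$, $j\geq 0$ to the case $k\geq 0$, $j<0$ via the adjoint relation is a clean way to cover what the paper leaves implicit.
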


\begin{lemma} \label{L: K(k+j)=-K(k)WK(m) if k,j>0} If $k,j>0$ then $K_kWK_j = - K_{k+j}.$
\end{lemma}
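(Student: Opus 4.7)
The proof will follow the template of Lemma~\ref{L: K(-k)WK(-j)=K(-k-j)} almost verbatim, with the role of the two residue sources interchanged. Start from the double-contour representation
\[
(2\pi i)^2 K_k W K_j = \oint_{C_u(0)}\oint_{C_v(0)} R_{z_0}(N_v)\, W\, R_{z_0}(N_u)\, v^{-k} u^{-j} \, dv \, du,
\]
choosing $C_v(0)$ strictly inside $C_u(0)$, both so small that they enclose no resonance point other than $v=0$ (resp.\ $u=0$). Apply the second resolvent identity to rewrite the kernel as $(u-v)^{-1}[R_{z_0}(N_v)-R_{z_0}(N_u)]$, splitting the integral into two pieces $I_1$ and $I_2$.

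For $I_1$, the contribution containing $R_{z_0}(N_v)$, perform the $u$-integration first. The function $u\mapsto (u-v)^{-1}u^{-j}$ has poles only at $u=v$ (inside $C_u$) and $u=0$; since $j>0$ it is $O(u^{-j-1})$ at infinity, so its residues sum to zero and $\oint_{C_u(0)}(u-v)^{-1}u^{-j}\,du = 0$. This gives $I_1 = 0$, exactly the calculation behind Lemma~\ref{L: K(k)WK(m)=0}. For $I_2$, the contribution containing $-R_{z_0}(N_u)$, perform the $v$-integration first. Since $|v|<|u|$ on the chosen contours, expand $(u-v)^{-1} = \sum_{n\geq 0} v^n u^{-n-1}$; multiplying by $v^{-k}$ and extracting the $v^{-1}$ coefficient, only $n=k-1\geq 0$ survives, yielding $\oint_{C_v(0)} v^{-k}(u-v)^{-1}dv = 2\pi i\, u^{-k}$. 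Substituting back,
\[
I_2 = -\frac{1}{2\pi i}\oint_{C_u(0)} R_{z_0}(N_u)\, u^{-(k+j)}\,du = -K_{k+j},
\]
which gives the claim.

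The only real obstacle is the bookkeeping: one must track which expansion of $(u-v)^{-1}$ is valid on which contour and be careful that for positive indices $k,j$ the relevant residue contribution flips sign compared to the negative-index case of Lemma~\ref{L: K(-k)WK(-j)=K(-k-j)}. Once the contour ordering $|v|<|u|$ is fixed, everything reduces to a mechanical residue computation, and the minus sign in $-K_{k+j}$ appears naturally as the single contribution from the $-R_{z_0}(N_u)$ term.
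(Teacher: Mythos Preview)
Your argument is correct. It is essentially the same double--contour--plus--resolvent--identity computation as the paper's, with one cosmetic difference: the paper swaps the ordering to $C_v(0)$ strictly \emph{outside} $C_u(0)$ and then evaluates the surviving $u$--integral by Cauchy's theorem on the exterior of $C_u(0)$ (where $u^{-j}$ is holomorphic at $\infty$), which is where the minus sign comes from in their version. You instead keep $C_v(0)$ inside $C_u(0)$, kill the $R_{z_0}(N_v)$ term via the residues-sum-to-zero observation (precisely the calculation the paper records just before Lemma~\ref{L: K(k)WK(m)=0}), and compute the surviving $R_{z_0}(N_u)$ term directly. Either contour ordering works; the two computations are interchangeable.
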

\begin{proof} The proof is the same as that of Lemma~\ref{L: K(-k)WK(-j)=K(-k-j)},
with the following changes: choose $C_v(0)$ to be strictly outside $C_u(0),$ 
and apply Cauchy's theorem to the exterior of $C_u(0),$
including~$\infty,$ where $u^{-j}$ is holomorphic. Since 
$C_u(0)$ is anticlockwise oriented, this also gives the sign. 
\end{proof}


\begin{lemma} For $k \geq 0$ we have $\im K_{-k} = \im \bfA^{k}_{z_0}.$ In particular, $\im K_0 = \im P_{z_0}.$
\end{lemma}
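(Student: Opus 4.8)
The plan is to establish the single nontrivial identity $\im K_{-k}=\im\bfA^k_{z_0}$ by proving the two inclusions separately, each being an immediate consequence of a one-sided factorisation already recorded in the preceding lemmas; the ``in particular'' assertion is then just the case $k=0$.

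For the inclusion $\im\bfA^k_{z_0}\subseteq\im K_{-k}$, I would simply invoke the definition~\eqref{F: Ak=K(-k)W and Bk=WK(-k)}, together with the convention $\bfA^0_{z_0}=P_{z_0}$ and~\eqref{F: Pz=K0*W, Qz=W*K0}, which give $\bfA^k_{z_0}=K_{-k}W$ for every $k\geq 0$. Since any vector in the image of a product $K_{-k}W$ lies in the image of $K_{-k}$, this inclusion follows at once. For the reverse inclusion $\im K_{-k}\subseteq\im\bfA^k_{z_0}$, I would use the identity $K_{-k}=\bfA^k_{z_0}K_0$ from~\eqref{F: K(-k)=K0WK(-k)} (which itself comes from Lemma~\ref{L: K(-k)WK(-j)=K(-k-j)} with $j=0$, i.e.\ $K_{-k}WK_0=K_{-k}$); this exhibits $K_{-k}$ as $\bfA^k_{z_0}$ composed on the right with $K_0$, so its image is contained in $\im\bfA^k_{z_0}$. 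Combining the two inclusions gives $\im K_{-k}=\im\bfA^k_{z_0}$, and taking $k=0$ yields $\im K_0=\im P_{z_0}$ because $\bfA^0_{z_0}=P_{z_0}$ by convention.

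There is essentially no obstacle here: the statement is a bookkeeping corollary of the multiplicativity relations for the Laurent coefficients, and the only point requiring a moment's care is the boundary value $k=0$, where one must read off $\bfA^0_{z_0}=K_0W$ from the conventions rather than from the $k>0$ formula. In the unbounded-perturbation setting the same argument applies verbatim after replacing $W$ by $J$ throughout, as in Remark~\ref{R: 1st unbdd pf}.
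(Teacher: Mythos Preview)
Your proof is correct and follows essentially the same approach as the paper: one inclusion from $\bfA^k_{z_0}=K_{-k}W$ (definition~\eqref{F: Ak=K(-k)W and Bk=WK(-k)} together with the $k=0$ convention), the other from $K_{-k}=\bfA^k_{z_0}K_0$ (equation~\eqref{F: K(-k)=K0WK(-k)}). Your explicit treatment of the boundary case $k=0$ and the remark on the unbounded setting are fine additions but not required.
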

\begin{proof} 
\eqref{F: Ak=K(-k)W and Bk=WK(-k)} gives $\im \bfA^{k}_{z_0}  \subset \im K_{-k}$
and \eqref{F: K(-k)=K0WK(-k)} gives $\im K_{-k} \subset \im \bfA^{k}_{z_0}.$
\end{proof}


Based on previous material, we can write the Laurent expansion of $R_{z_0}(N_v)$ as 
\begin{equation*}\label{F: Laurent expansion of R_z(N_v)}
  \begin{split}
      R_{z_0}(N_v) & = v^{-d}\bfA^{d-1}_{z_0}K_0 + \ldots  v^{-2}\bfA_{z_0}K_0 + v^{-1} K_0 \\
              & \mbox{ } \hskip 2 cm + K_1 - v \tilde A K_1 + v^2 \tilde A^2 K_1 - \ldots,
  \end{split}
\end{equation*}
where $\tilde A = K_1 W.$

\bigskip

By definition, for $k \geq 1$
\begin{equation} \label{F: depth of res vector}
  \text{a resonance vector\ } \phi \text{\ has \emph{depth at least~$k$}, if\ } 
  \phi \in \im \bfA^k_{z_0}(N_0,W).
\end{equation}

The following facts come from results in \cite{Aza16}, namely Proposition~3.2.3 and Theorem~3.4.3, whose proofs apply here more or less verbatim.
The vector space of resonance vectors $\Upsilon_{z_0}(N_0,W)$ is $\im P_{z_0}(N_0,W).$
Also, for all $k=1,2,\ldots,d$ 
\begin{equation*}\label{F: A_zUpsilon_z^k = Upsilon_z^(k-1)}
  \bfA_{z_0}(N_0,W) \Upsilon^k_{z_0}(N_0,W) = \Upsilon^{k-1}_{z_0}(N_0,W),
\end{equation*}
where by agreement $\Upsilon^0_{z_0}(N_0,W) = \{0\}.$ Thus, the nilpotent operator
$\bfA_{z_0}(N_0,W)$ decreases the order of resonance vectors by~$1.$

\section{Semisimple eigenvalues and the conjugate of an eigenpath}\label{S: semisimple}
Recall that we write $z \lra s$ if $z$ is an eigenvalue of $H_s=H_0+sV,$ that is, $s$ is a resonance point corresponding to 
the triple $(z,H_0,V).$
As we shall see shortly, imposing a generic condition, with a resonance vector~$\phi$ corresponding to~$z\lra s,$ one can naturally associate a resonance vector $\phi^*$ corresponding to~$\bar z \lra \bar s.$ 
In order to explain the nature of this condition, assume that the geometric multiplicity of~$z$ as an eigenvalue of $H_s$ is~$1.$ 
In such a case, the geometric multiplicity of~$\bar z$ as an eigenvalue of $H_{\bar s}$ is also equal to~$1,$ 
and thus there are eigenvectors~$\phi$ and~$\phi^*$ corresponding to these eigenvalues, which are unique up to scaling. 
The condition is that the vectors~$\phi$ are~$\phi^*$ are not orthogonal.
In general, if the multiplicity of~$z$ is greater than~$1,$ this condition is to be replaced by non-orthogonality of $\phi$ and $\clV(\bar z, H_{\bar s}).$ 
As reviewed below, this property is equivalent to the semisimplicity of the eigenvalue $z$.
In addition, the upcoming sections require a certain kind of stability under perturbations of~$H_s$, which in essence amounts to the regular behaviour of both $\phi$ and $\phi^*$.
These assumptions are clarified in this section.

For $z_0 \in \sigma_d(N_0),$ with the pair $(z_0,N_0),$ 
apart from $P_{z_0}(N_0,W)$ defined in \eqref{F: Pz=K0*W, Qz=W*K0},
we can associate another idempotent, the standard eigenprojection (cf.~\cite[(II.1.16)]{Kat80})
\begin{equation} \label{F: E(z0,N0)=}
  E(z_0,N_0) := - \frac 1{2\pi i} \oint_{C(z_0)} (N_0 - \zeta)^{-1}\,d\zeta.
\end{equation}
There is also the associated nilpotent operator~\cite[(II.1.22)]{Kat80} 
\begin{equation}\label{F: D(z0,N0)=}
  D(z_0,N_0) := - \frac 1{2\pi i} \oint_{C(z_0)} (\zeta-z_0)(N_0 - \zeta)^{-1}\,d\zeta.
\end{equation}
We note 
\[
  (D(z_0,N_0))^* = D(\bar z_0,N^*_0), \quad (E(z_0,N_0))^* = E(\bar z_0,N^*_0).
\]

\begin{prop} \label{P: prop X} Let $z_0 \in \sigma_d(N_0).$
The following assertions are equivalent.
\begin{enumerate}
  \item \label{I: im E=V} $\im E(z_0,N_0) = \clV(z_0,N_0).$
  \item \label{I: non-zero phi not perp Upsilon bar} 
          Any non-zero vector~$\phi$ from~$\clV(z_0,N_0)$ is not orthogonal to some vector of $\clV(\bar z_0,N_0^*).$
  \item \label{I: proj of Ups-n has zero kernel} 
         The orthogonal projection of~$\clV(z_0,N_0)$ to $\clV(\bar z_0,N_0^*)$  
            is a linear isomorphism.
  \item \label{I: D = 0} $D(z_0,N_0) = 0.$
  \item \label{I: im bar E=V bar} $\im E(\bar z_0,N_0^*) = \clV(\bar z_0,N_0^*).$
  \item \label{I: non-zero bar phi not perp Upsilon} 
          Any non-zero vector~$\phi$ from~$\clV(\bar z_0,N_0^*)$ is not orthogonal to some vector of $\clV(z_0,N_0).$
  \item \label{I: proj of Ups-n bar has zero kernel} 
         The orthogonal projection of ~$\clV(\bar z_0,N_0^*)$ to $\clV(z_0,N_0)$  
            is a linear isomorphism.
  \item \label{I: bar D = 0} $D(\bar z_0,N_0^*) = 0.$
\end{enumerate}
\end{prop}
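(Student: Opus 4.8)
The plan is to exploit the symmetry of the list under the involution $(z_0,N_0)\leftrightarrow(\bar z_0,N_0^*)$, which sends assertion (i) to (v), (ii) to (vi), (iii) to (vii), and (iv) to (viii). Using the adjoint relations $(E(z_0,N_0))^* = E(\bar z_0,N_0^*)$ and $(D(z_0,N_0))^* = D(\bar z_0,N_0^*)$ already noted, it suffices to prove the equivalence of (i)--(iv); the remaining equivalences follow by applying that result to the pair $(\bar z_0, N_0^*)$ and taking adjoints. So the real work is a cycle, say (iv) $\Rightarrow$ (i) $\Rightarrow$ (ii) $\Rightarrow$ (iii) $\Rightarrow$ (iv), or some convenient permutation thereof.

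For (iv) $\iff$ (i): this is essentially classical Kato theory. The range of $E(z_0,N_0)$ is the root space $\bigcup_k \ker(N_0-z_0)^k$, and on this finite-dimensional space $N_0 - z_0$ acts as $D(z_0,N_0)$, which is nilpotent; the root space coincides with the eigenspace $\clV(z_0,N_0) = \ker(N_0-z_0)$ precisely when this nilpotent is zero, i.e.\ when $z_0$ is semisimple. I would cite \cite[Section III.6]{Kat80} or reprove it in a line using $D = (N_0-z_0)E$ and $D^k = (N_0-z_0)^k E$. For (i) $\iff$ (v) one simply dualises, using that $\dim\im E(z_0,N_0) = \dim \im E(\bar z_0, N_0^*)$ (adjoints have equal rank) together with the known inequality $\dim\clV(z_0,N_0) = \dim\clV(\bar z_0,N_0^*) = m$ (a kernel and its adjoint's kernel have equal dimension); since $\clV \subseteq \im E$ always, equality of dimensions on both sides forces equality of spaces on both sides simultaneously.

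The heart of the matter is the equivalence of the geometric condition (ii)/(iii) with semisimplicity. For (iii) $\iff$ (ii): the orthogonal projection $\Pi\colon \clV(z_0,N_0)\to\clV(\bar z_0,N_0^*)$ is a linear map between spaces of equal finite dimension $m$, so it is an isomorphism iff it is injective iff its kernel is trivial; but $\phi\in\ker\Pi$ means exactly that $\phi$ is orthogonal to all of $\clV(\bar z_0,N_0^*)$, so triviality of the kernel is precisely the non-orthogonality statement (ii). For the link to (i)/(iv), here is the key point I would argue: the root space $\im E(z_0,N_0)$ is invariant under $N_0$, and the orthogonal complement of $\clV(\bar z_0,N_0^*)$ inside $\im E(z_0,N_0)$ — with respect to the pairing $\langle\cdot,\cdot\rangle$ — equals $\im D(z_0,N_0)$, equivalently $\ker D(z_0,N_0) = \clV(z_0,N_0)$ is the orthogonal complement within the root space of $\im D(\bar z_0,N_0^*)$. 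Concretely: $\langle \phi, \psi\rangle = 0$ for all $\psi\in\clV(\bar z_0,N_0^*) = \ker(N_0^*-\bar z_0)$ iff $\phi \perp \ker(N_0^*-\bar z_0)$ iff $\phi \in \overline{\im(N_0-z_0)}$; intersecting with the root space, $\phi\in\im D(z_0,N_0)$. So a nonzero $\phi\in\clV(z_0,N_0)$ orthogonal to all of $\clV(\bar z_0,N_0^*)$ exists iff $\clV(z_0,N_0)\cap \im D(z_0,N_0)\neq\{0\}$, and since $D(z_0,N_0)$ is a nilpotent operator on the root space, $\ker D \cap \im D \neq \{0\}$ holds iff $D\neq 0$ — that is, iff (iv) fails. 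Running this backwards gives (iv) $\Rightarrow$ (ii), and combined with the earlier steps closes the loop.

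The main obstacle I anticipate is bookkeeping around the sesquilinear pairing: one must be careful that ``orthogonal complement of $\ker(N_0^*-\bar z_0)$'' is $\overline{\im(N_0-z_0)}$ (closure harmless here since these are finite-codimensional considerations restricted to a finite-dimensional root space), and that the restriction of this to the root space is exactly $\im D(z_0,N_0)$ rather than something larger. A clean way to see the latter: decompose $\clH = \im E \oplus \ker E$ into the $N_0$-invariant root space and its complement; $N_0 - z_0$ is invertible on $\ker E$ and nilpotent $= D$ on $\im E$, and a short computation with the block structure identifies $\im D$ inside $\im E$ with exactly the vectors in $\im E$ orthogonal to $\clV(\bar z_0,N_0^*)$. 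Everything else is finite-dimensional linear algebra and the standard nilpotent fact $\ker D\cap\im D\neq\{0\}\iff D\neq 0$.
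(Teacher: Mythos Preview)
Your proof is correct. The paper's own proof dismisses most of the equivalences---(i)$\iff$(iv)$\iff$(v)$\iff$(viii) and (ii)$\iff$(iii)$\iff$(vi)$\iff$(vii)---as ``obvious and/or well-known,'' much as you do, and then proves (i)$\iff$(ii) directly. For the reverse implication (ii)$\Rightarrow$(i) the paper's argument is essentially your contrapositive: if $D\neq 0$ then some nonzero $\phi\in\clV(z_0,N_0)$ equals $D\psi$, whence $\langle\phi^*,\phi\rangle = \langle D(\bar z_0,N_0^*)\phi^*,\psi\rangle = 0$ for every $\phi^*\in\clV(\bar z_0,N_0^*)$. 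The genuine difference is in the forward direction: where you argue structurally via the identification $\im D = \im E \cap \clV(\bar z_0,N_0^*)^\perp$ together with the nilpotent fact $\ker D\cap\im D\neq\{0\}\iff D\neq 0$, the paper instead exhibits an explicit witness: given $0\neq\phi\in\clV(z_0,N_0)$ and assuming (i) (hence (v)), the vector $E(\bar z_0,N_0^*)\phi$ lies in $\clV(\bar z_0,N_0^*)$ and satisfies $\langle E(\bar z_0,N_0^*)\phi,\phi\rangle = \langle\phi,E(z_0,N_0)\phi\rangle = \langle\phi,\phi\rangle\neq 0$. The paper's trick is a one-liner; your route is a bit longer but makes transparent \emph{why} the semisimplicity and non-orthogonality conditions coincide, by pinning down $\im D$ as exactly the part of the root space blind to $\clV(\bar z_0,N_0^*)$.
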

\begin{proof} The equivalences~\ref{I: bar D = 0} $\iff$~\ref{I: im bar E=V bar} 
$\iff$~\ref{I: im E=V} $\iff$~\ref{I: D = 0}, 
\ref{I: non-zero phi not perp Upsilon bar} $\iff$ 
\ref{I: proj of Ups-n has zero kernel} $\iff$~\ref{I: proj of Ups-n bar has zero kernel} 
$\iff$~\ref{I: non-zero bar phi not perp Upsilon} are obvious and/or well-known. 
The equivalence~\ref{I: im E=V} $\iff$~\ref{I: non-zero phi not perp Upsilon bar} is also obvious, but still we give its proof. 

\ref{I: im E=V} $\Rightarrow$~\ref{I: non-zero phi not perp Upsilon bar}.
For $0 \neq \phi \in \clV(z_0,N_0)$ we have 
$
  0 \neq \scal{\phi,\phi} = \scal{\phi,E(z_0,N_0)\phi} = \scal{E(\bar z_0,N_0^*)\phi,\phi}.
$
So, the non-zero vector $E(\bar z_0,N_0^*)\phi \in \clV(\bar z_0,N_0^*)$ is not orthogonal to~$\phi.$

\ref{I: non-zero phi not perp Upsilon bar} $\Rightarrow$~\ref{I: im E=V}.
If $\im E(z_0,N_0) \neq \clV(z_0,N_0),$ then for some non-zero $\phi \in \clV(z_0,N_0)$
and $\psi$ we have $D(z_0,N_0) \psi = \phi.$ So, for any $\phi^* \in \clV(\bar z_0,N_0^*)$
we have 
$
  \scal{\phi^*,\phi} = \scal{D(\bar z_0, N_0^*)\phi^*,\psi} = 0.
$
\end{proof}

\begin{defn} \label{D: semisimple point}
 $z_0 \in \sigma_d(N_0)$ is called \emph{semisimple,} if some and therefore any of the assertions of Proposition~\ref{P: prop X} hold. 
\end{defn}

There are yet other equivalent definitions of semisimplicity in the literature. Another worth noting is for the gap $\hat{\delta}(\mathsf{M},\mathsf{N})$ between the subspaces $\mathsf{M}=\clV(z_0,N_0)$ and $\mathsf{N}=\clV(\bar z_0, N_0^*)$, as defined in \cite[IV-\S2.1]{Kat80}, to be less than 1 -- its equivalence can be seen with items (ii) and (iv) of Proposition~\ref{P: prop X} taken together. 

Numerical experiments easily generate pairs of self-adjoint matrices $H_0$ and~$V$ such that some $z_0 \in \sigma(H_0+s_0 V)$ fails to be semisimple. 
If~$s_0$ is real, then all eigenvalues are semisimple,
which is one of the differences between the real case considered in~\cite{Aza17} and the present one. 

\bigskip
For $N_0\in\clA$ and $z_0\in\sigma_d(N_0)$, we now reintroduce a perturbation $W\in\clA_0$. 
Let $z(v)$ be an eigenvalue function corresponding to $N_v = N_0 + vW$, with $z(0) = z_0$. 
Then~$z(v)$ is holomorphic in a deleted neighbourhood of $0$, but not necessarily at $v=0$ if it is an exceptional point, that is, if $v=0$ is one of the discrete set of points where the number of distinct eigenvalues of $N_v$ within a small neighbourhood of $z_0$ is different from its otherwise constant value.

We will restrict our attention to the case when~$z(v)$ is not subject to branching at $v=0$, in other words, we assume $z(v)$ is analytic at $v=0$. 
In this case (see e.g.~\cite[Chapter~18]{GLR}) there must exist an eigenvector function, or eigenpath, corresponding to $z(v)$ which is also analytic in a neighbourhood of $v=0$. 
Such an analytic eigenpath will be denoted by $\phi(v)$. 
The anti-holomorphic eigenvalue function of $N^*_v = N_0^* + \bar v W$, equal to $\overline{z(v)}$, will be denoted~$z^*(v).$ 
A~corresponding anti-holomorphic eigenpath will be denoted $\phi^*(v)$.
Given $\phi(v)$, our aim is to canonically assign a non-orthogonal $\phi^*(v)$.

Assume first that $z_0$ is a simple eigenvalue, i.e.~its algebraic multiplicity is equal to~$1.$ 
Then by Proposition~\ref{P: prop X}, $\phi(v)$ and $\phi^*(v)$ are not orthogonal and we can normalise 
$\phi^*(v)$ in such a way that 
\begin{equation*} \label{F: (phi*,phi)=1}
  \scal{\phi^*(v), \phi(v)} = 1.
\end{equation*}
We call the resulting eigenpath $\phi^*(v)$ 
the \emph{conjugate} of $\phi(v).$ 
Clearly, the conjugate of an eigenpath of geometric multiplicity $1$ is unique. 

In the case of arbitrary geometric multiplicity~$m$, the following assumption is suggested by developments in~\cite{HryLan} and~\cite{LMZ}. 
Some terminology beforehand:
Let $z_0\in\sigma_d(N_0)$ and let $z(v)$ be an eigenvalue of $N_v$ which converges to $z_0$ (and is not necessarily analytic there). Suppose $\phi(v)$ is a continuous normalised eigenvector corresponding to $z(v)$. 
Then, following~\cite{HryLan}, $\phi(0)$~will be called a {\em generating eigenvector} of~$N_v$ corresponding to $z(v)$. 

A generating eigenvector is necessarily an eigenvector, but the converse is not true in general. 
The comparison gives some indication about the presence of branching at~$z_0$. 
Consider for example the following results taken from \cite{HryLan}:
If branching occurs at~$z_0$, then eigenvectors $\phi_\tau(v)$, $j=1,2,\ldots$ corresponding to different branches $z_\tau(v)$, $j=1,2,\ldots$ of the same Puiseux series can be chosen so that they determine the same generating eigenvector $\phi_1(0) = \phi_2(0) = \ldots$ (\cite[Lemma~3.2]{HryLan}).
On the other hand, if $z_0$ is semisimple and generating eigenvectors span the eigenspace $\clV(z_0,N_0)$, then there can be no branching at $z_0$ (\cite[Proposition 3.5]{HryLan}). 

\begin{assum}\label{A: weak conjugate}
Let $z(v)$ be an eigenvalue of $N_v$ converging to $z_0\in\sigma_d(N_0)$.
For every generating eigenvector $\phi$ of~$N_v$ corresponding to $z(v)$, there exists a generating eigenvector $\phi^*$ of $N_v^*$ corresponding to $z^*(v)$ which is not orthogonal to $\phi$.
\end{assum}

The next theorem, here for convenience, is a reproduction of~\cite[Theorem~3.6]{HryLan}.
\begin{thm}\label{T: assum}
If Assumption~\ref{A: weak conjugate} holds for an eigenvalue $z(v)$ of $N_v$, then $z(v)$ is analytic in a neighbourhood of $v=0$ and semisimple in a deleted neighbourhood of $v=0$.
Suppose the eigenvalue~$z_0$ has arbitrary (algebraic) multiplicity~$m$ and Assumption~\ref{A: weak conjugate} holds for all eigenvalues of the $z_0$-group. 
Then for all small enough~$v$, the $z_0$-group consists of~$m$ eigenvalues $z_\tau(v)$, $\tau = 1,2,\ldots,m$, with any repeated eigenvalues being semisimple. 
All eigenvalues and corresponding eigenvectors can be chosen to be analytic. In particular, $z_0$ is semisimple, so that its geometric multiplicity is~$m$. 
\end{thm}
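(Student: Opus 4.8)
This is \cite[Theorem~3.6]{HryLan}; I would prove it by combining Kato's analytic perturbation theory \cite{Kat80} with Proposition~\ref{P: prop X} and the two facts of \cite{HryLan} quoted above. First I would pass to the total eigenprojection $\mathbb{P}(v) = -\frac{1}{2\pi i}\oint_{C(z_0)}(N_v-\zeta)^{-1}\,d\zeta$, with $C(z_0)$ enclosing $z_0$ and no other point of $\sigma(N_0)$: it is holomorphic at $v=0$, of constant rank equal to the algebraic multiplicity $m$, and $N_v$ restricted to $\im\mathbb{P}(v)$ is an $m\times m$ holomorphic matrix family whose eigenvalues constitute the $z_0$-group. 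As recalled in Section~\ref{S: prelim}, these split into Puiseux cycles, a cycle of period $p$ consisting of $p$ branches analytic in $w=v^{1/p}$ and cyclically permuted by $w\mapsto e^{2\pi i/p}w$; after clearing removable poles one may choose the corresponding eigenprojections and eigenvectors analytic in $w$ too, so that every continuous normalised eigenpath has a limit at $v=0$, namely a generating eigenvector.

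The core step is to show that Assumption~\ref{A: weak conjugate} forces $z(v)$ to be analytic at $v=0$. Arguing by contradiction, if $z(v)$ branches then it lies in a cycle of period $p\geq 2$; by \cite[Lemma~3.2]{HryLan} the branches $z_1(v):=z(v),\ldots,z_p(v)$ of this cycle admit eigenpaths $\phi_k(v)$ with a single generating eigenvector $g:=\phi_1(0)=\ldots=\phi_p(0)$. The Assumption supplies a generating eigenvector $g^*=\psi(0)$ of $N_v^*$ corresponding to $z^*(v)=\overline{z_1(v)}$, coming from a continuous normalised eigenpath $\psi(v)$, with $\scal{g^*,g}\neq 0$. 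Since $p\geq 2$ there is another branch $z_2(v)$, with $z_2(v)\neq z_1(v)$ for small $v\neq 0$, so $\psi(v)$ and $\phi_2(v)$ -- eigenvectors of $N_v^*$ and $N_v$ for different eigenvalues -- are orthogonal; letting $v\to 0$ along $w$ and using $\phi_2(0)=g$ gives $\scal{g^*,g}=0$, the desired contradiction. Hence $z(v)$, and likewise $z^*(v)=\overline{z(v)}$, is analytic at $v=0$, and \cite[Chapter~18]{GLR} then provides an analytic eigenpath $\phi(v)$ and an anti-analytic eigenpath $\phi^*(v)$ of $N_v^*$. Semisimplicity of $z(v)$ in a deleted neighbourhood of $v=0$ I would then extract from Proposition~\ref{P: prop X} and the same orthogonality relations, using that along a holomorphic branch the difference between algebraic and geometric multiplicity is constant off a discrete set.

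For the second part I would assume the Assumption for every branch of the $z_0$-group, so that by the above all of them are analytic, and argue that $z_0$ is semisimple. If not, Proposition~\ref{P: prop X} gives $D_0:=D(z_0,N_0)\neq 0$, whence $\clB:=\clV(z_0,N_0)\cap\im D_0$ is non-zero -- it is the span of the bottom vectors of the Jordan blocks of $N_0|_{\im\mathbb{P}(0)}$ of size $\geq 2$ -- and every element of $\clB$ is orthogonal to all of $\clV(\bar z_0,N_0^*)$. I would then invoke the fact that, in the absence of branching, such a chain-bottom vector $\chi\in\clB$ is a generating eigenvector of $N_v$, being the limit of eigenpaths emanating from its Jordan block (this is essentially why \cite[Proposition~3.5]{HryLan} needs a spanning hypothesis). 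The Assumption for the relevant branch then yields a generating -- hence genuine -- eigenvector $\phi^*\in\clV(\bar z_0,N_0^*)$ with $\scal{\phi^*,\chi}\neq 0$, contradicting $\chi\in\clB$. Therefore $D_0=0$: $z_0$ is semisimple with geometric multiplicity $m$, the $z_0$-group splits into $m$ analytic branches counted with multiplicity, any coincident ones being semisimple by the first part, and \cite[Chapter~18]{GLR} furnishes analytic eigenpaths spanning each eigenspace.

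The main obstacle, I expect, lies in the two auxiliary facts invoked without proof above: that chain-bottom vectors are always generating eigenvectors when there is no branching, and that semisimplicity of $z(v)$ can be promoted from generic $v\neq 0$ to all small $v\neq 0$ from a hypothesis imposed only at $v=0$. Both ultimately rest on the structure of perturbations of Jordan blocks and on the analysis of \cite{HryLan}, to which I would defer for the details.
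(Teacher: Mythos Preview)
Your analyticity argument is essentially the paper's: both invoke \cite[Lemma~3.2]{HryLan} to collapse the branches of a putative Puiseux cycle onto a single generating eigenvector~$g$, then use the orthogonality~\eqref{E: (phi*,phi) = 0} between a conjugate eigenpath for one branch and the eigenpath for a different branch to force $\scal{g^*,g}=0$, contradicting the Assumption. The paper phrases this slightly more symmetrically---observing that \emph{any} $\phi_\mu^*(v)$ must differ from at least one of the two branches and hence be orthogonal to~$g$---but the content is the same.

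Where the paper goes further than your sketch is on semisimplicity in a deleted neighbourhood, which you flag as an obstacle. The paper does not argue via constancy of multiplicities off a discrete set; instead it takes a sequence $v_j\to 0$ at which $z_\tau(v_j)$ fails to be semisimple, and at each $v_j$ selects an eigenvector $\phi_\tau(v_j)$ sitting at the bottom of a Jordan chain of length $\geq 2$. The elementary computation
\[
0 = \scal{\phi_0 + (z_0-N_0)\phi_1,\phi_0^*} = \scal{\phi_0,\phi_0^*} + \scal{\phi_1,(\bar z_0 - N_0^*)\phi_0^*} = \scal{\phi_0,\phi_0^*}
\]
shows that such a chain-bottom vector is orthogonal to the \emph{entire} dual eigenspace $\clV(z_\tau^*(v_j),N_{v_j}^*)$. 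Choosing $\phi_\tau^*(v)$ continuous and sending $j\to\infty$ gives $\scal{\phi_\tau(0),\phi_\tau^*(0)}=0$; combined with~\eqref{E: (phi*,phi) = 0} for $\mu\neq\tau$, this means every generating eigenvector of $N_v^*$ is orthogonal to the generating eigenvector $\phi_\tau(0)$, contradicting the Assumption. This directly disposes of your worry about promoting a generic statement to all small~$v$.

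For the second statement the paper simply omits the proof and refers to \cite[Theorem~3.6]{HryLan}, so your deferral there is exactly in line with the paper; your first auxiliary fact (that chain-bottom vectors are generating in the absence of branching) is not something the paper's own text establishes or needs.
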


\begin{proof}
The proof proceeds by contradiction and makes use of (\cite[Lemma 3.1]{HryLan}): 
For distinct eigenvalue functions $z_\tau(v)$ and $z_\mu(v)$ of~$N_v$, any continuous eigenvectors $\phi_\tau(v)$ and $\phi^*_\mu(v)$ corresponding respectively to the eigenvalues $z_\tau(v)$ of $N_v$ and $z^*_\mu(v)$ of $N_v^*$, must satisfy 
\begin{equation}\label{E: (phi*,phi) = 0}
 \scal{\phi^*_\mu(0),\phi_\tau(0)} = 0.
\end{equation}

Assume there is an eigenvalue $z_\tau(v)$ of~$N_v$ for which Assumption~\ref{A: weak conjugate} holds and yet $z_\tau(v)$ tends to~$z_0$ as $v\to 0$ and is not analytic. 
Then~$z_\tau(v)$ must be a branch of some Puiseux series. 
Let $z_{\tau'}(v)$ be any different branch of the same algebraic function. 
Let $\phi_\tau(v)$ and $\phi_{\tau'}(v)$ be corresponding continuous eigenvector functions, which it is possible to choose such that $\phi_\tau(0) = \phi_{\tau'}(0) =: \phi_0$ (see \cite[Lemma~3.2]{HryLan}).
Let $\phi_\mu^*(v)$ be any eigenvector corresponding to some eigenvalue $z_\mu^*(v)$ of~$N^*_v$ such that both $z_\mu^*(v)$ and $\phi_\mu^*(v)$ are continuous.
Since $z_\mu(v)$ can not be identical to both $z_\tau(v)$ and $z_{\tau'}(v)$, it follows from~\eqref{E: (phi*,phi) = 0} that
\[
 \scal{\phi_\mu^*(0),\phi_0} = 0,
\]  
which is a contradiction.
Hence~$z_\tau(v)$ must be analytic.

Suppose $z_\tau(v)$ is not semisimple for $v=v_j$, $j\in\mbN$, such that $v_j\to 0$ as $j\to\infty$. 
Choosing an eigenvector $\phi_\tau(v)$ corresponding to $z_\tau(v)$ such that $\phi_\tau(v_j)$ belongs to a Jordan chain of length greater than~1, we get for any $\phi_\tau^*(v_j) \in \clV(z^*_\tau(v_j), N_{v_j}^*)$,
\begin{equation}\label{F: (phi,phi*) = 0}
 \scal{\phi_\tau(v_j), \phi_\tau^*(v_j)} = 0.
\end{equation}
This is because, in general, if an eigenvalue $z_0$ of $N_0$ is not semisimple, with a Jordan chain $\phi_0$, $\phi_1$, then for any $\phi_0^*\in\clV(\bar z_0, N_0^*)$, 
\begin{align*}
0 &= \scal{\phi_0 + (z_0 - N_0)\phi_1,\phi_0^*} \\
  &= \scal{\phi_0,\phi_0^*} + \scal{\phi_1, (\bar z_0 - N_0^*)\phi_0^*} \\
  &= \scal{\phi_0,\phi^*_0}.
\end{align*}
The vector function $\phi_\tau^*(v)$ can be chosen to be continuous, so that sending $j\to\infty$ in~\eqref{F: (phi,phi*) = 0} gives
$
 \scal{\phi_\tau(0), \phi_\tau^*(0)} = 0.
$
Since by~\eqref{E: (phi*,phi) = 0}, $\scal{\phi^*_\mu(0),\phi_\tau(0)} = 0$ for any $\phi^*_\mu(v)$ corresponding to $z^*_\mu(v)$ such that $z_\tau \not\equiv z_\mu$, we get a contradiction.

Proof of the remaining parts of the second statement is omitted (see \cite[Theorem 3.6]{HryLan}).
\end{proof}

\begin{prop}
For Assumption~\ref{A: weak conjugate} to hold for all eigenvalues of the $z_0$-group is equivalent to the statement:
\begin{enumerate}
\item[\upshape{(P)}] The eigenvalue~$z_0$ is semisimple and generating eigenvectors span the eigenspace $\clV(z_0,N_0)$.
\end{enumerate}
\end{prop}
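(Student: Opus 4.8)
The plan is to prove the equivalence of Assumption~\ref{A: weak conjugate} (for all eigenvalues of the $z_0$-group) with statement (P) in two directions, using Theorem~\ref{T: assum} and Proposition~\ref{P: prop X} as the main tools.

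\medskip
\emph{From the Assumption to (P).} Suppose Assumption~\ref{A: weak conjugate} holds for all eigenvalues of the $z_0$-group. Then Theorem~\ref{T: assum} immediately gives that $z_0$ is semisimple. It remains to show that generating eigenvectors span $\clV(z_0,N_0)$. By Theorem~\ref{T: assum}, the $z_0$-group consists of $m$ analytic eigenvalue functions $z_\tau(v)$ with analytic eigenpaths $\phi_\tau(v)$, and the geometric multiplicity of $z_0$ is $m$. The vectors $\phi_\tau(0)$ are by definition generating eigenvectors, so I would argue they are linearly independent and hence span the $m$-dimensional eigenspace $\clV(z_0,N_0)$. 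Linear independence should follow from the conjugate relation $\scal{\phi^*_\mu(0),\phi_\tau(0)} = \delta_{\tau\mu}$ (after normalisation): equation~\eqref{E: (phi*,phi) = 0} from \cite[Lemma 3.1]{HryLan} gives orthogonality of $\phi^*_\mu(0)$ to $\phi_\tau(0)$ when $z_\mu\not\equiv z_\tau$, and Assumption~\ref{A: weak conjugate} together with the semisimplicity (so that $\scal{\phi^*_\tau(0),\phi_\tau(0)}\neq 0$, which can be normalised to $1$) gives the diagonal entries. A biorthogonal-type system of this form is necessarily linearly independent, completing this direction.

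\medskip
\emph{From (P) to the Assumption.} Now suppose $z_0$ is semisimple and its eigenspace is spanned by generating eigenvectors. By \cite[Proposition 3.5]{HryLan} (quoted in the text), there is no branching at $z_0$, so all eigenvalues of the $z_0$-group are analytic and, being repeated eigenvalues arising from a semisimple one, are themselves semisimple near (and at) $v=0$; similarly for $N_v^*$. Fix an eigenvalue $z(v)$ of the $z_0$-group and a generating eigenvector $\phi$, i.e.\ $\phi = \phi(0)$ for a continuous (analytic) eigenpath $\phi(v)$ corresponding to $z(v)$. I need to produce a generating eigenvector $\phi^*$ of $N_v^*$ corresponding to $z^*(v) = \overline{z(v)}$ with $\scal{\phi^*,\phi}\neq 0$. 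Since $z(v)$ is semisimple for small $v\neq 0$, Proposition~\ref{P: prop X}(\ref{I: non-zero phi not perp Upsilon bar}) provides, for each such $v$, an eigenvector $\psi^*(v)\in\clV(z^*(v),N_v^*)$ with $\scal{\psi^*(v),\phi(v)}\neq 0$; normalising $\scal{\psi^*(v),\phi(v)}=1$, the resulting eigenpath $\phi^*(v)$ can be taken analytic (it is the conjugate eigenpath, well-defined in the analytic category away from exceptional points, and since $z_0$ is semisimple and there is no branching, $v=0$ is handled by continuity). Then $\phi^* := \phi^*(0)$ is a generating eigenvector of $N_v^*$ with $\scal{\phi^*,\phi} = 1 \neq 0$, which is exactly Assumption~\ref{A: weak conjugate}.

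\medskip
\emph{Main obstacle.} The delicate point is the behaviour at the point $v=0$ itself in the second direction: Proposition~\ref{P: prop X} only hands us a non-orthogonal eigenvector at each $v\neq 0$ where $z(v)$ is semisimple, and I must pass this to a \emph{generating} eigenvector at $v=0$, i.e.\ realise it as the limit of a continuous eigenpath with the normalisation persisting in the limit. This requires knowing that the conjugate eigenpath $\phi^*(v)$ extends analytically (or at least continuously) through $v=0$ with $\scal{\phi^*(0),\phi(0)}=1$; the absence of branching at $z_0$ (from \cite[Proposition 3.5]{HryLan}) is what makes this work, since it guarantees an analytic eigenpath for $N_v^*$ near $z^*_0$ whose value at $0$ lies in $\clV(\bar z_0, N_0^*)$, and semisimplicity of $z_0$ keeps the pairing non-degenerate in the limit. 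Making this limiting argument precise — rather than merely invoking it — is the part that warrants care; everything else reduces to the cited lemmas of \cite{HryLan} and the equivalences of Proposition~\ref{P: prop X}.
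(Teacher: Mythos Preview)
Your forward direction matches the paper's argument. The difference is in the backward direction, where you take a harder route than the paper and leave the key step unresolved.

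You argue by working at $v\neq 0$: invoke semisimplicity of $z(v)$ there to produce a non-orthogonal $\psi^*(v)$, normalise, and then try to push the resulting conjugate eigenpath through $v=0$. You rightly flag this extension as the main obstacle, and you do not actually carry it out. (You also use semisimplicity of $z(v)$ for small $v\neq 0$ without justification; it does follow from~(P), since the $m$ linearly independent analytic eigenpaths $\phi_\tau(v)$ must span the $m$-dimensional total generalised eigenspace near $0$, forcing each $z_\tau(v)$ to be semisimple --- but this should be said.)

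The paper avoids your obstacle by reversing the order of operations: it works at $v=0$ from the start. Semisimplicity of $z_0$ (Proposition~\ref{P: prop X}) immediately produces $\phi_0^*\in\clV(\bar z_0,N_0^*)$ with $\langle\phi_0^*,\phi_\tau(0)\rangle\neq 0$. The second half of~(P) is then transferred by symmetry to $N_v^*$: generating eigenvectors span $\clV(\bar z_0,N_0^*)$. Expanding $\phi_0^*$ in such generating eigenvectors and applying~\eqref{E: (phi*,phi) = 0} to kill the components corresponding to eigenvalues $z_\mu^*\not\equiv z_\tau^*$, one is left with a generating eigenvector of $N_v^*$ for the correct eigenvalue $z_\tau^*(v)$, still non-orthogonal to $\phi_\tau(0)$. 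No limiting argument is needed: the non-orthogonal vector is found at $v=0$ first, and its status as a generating eigenvector comes from the spanning hypothesis of~(P) rather than from controlling an analytic continuation. This is precisely the step that replaces your ``main obstacle''.
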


\begin{proof}
Suppose Assumption~\ref{A: weak conjugate} holds for the $z_0$-group. 
Then~$z_0$ must be semisimple by Theorem~\ref{T: assum}. 
Moreover, by the same theorem the spectrum of the~$z_0$-group is given for all small~$v$ by~$m$ (counting multiplicities) analytic eigenvalues $z_\tau(v)$, $j=1,\ldots, m$, for which we can find analytic normalised eigenvectors $\phi_\tau(v)$, $j=1,\ldots, m$. 
It follows that the eigenspace $\clV(z_0,N_0)$ is spanned by the generating eigenvectors~$\phi_\tau(0)$, $j=1,\ldots,m.$

Suppose~(P) holds.
We note that by \cite[Proposition 3.5]{HryLan} there can be no branching at~$z_0$, so all eigenvalues of the $z_0$-group are analytic.
For any eigenvalue $z_\tau(v)$ of the $z_0$-group, there exists a corresponding analytic eigenpath $\phi_\tau(v)$. 
Since~$z_0$ is semisimple, by Proposition~\ref{P: prop X} there must be an eigenvector $\phi^*_0$ corresponding to $\bar z_0$ such that $\scal{\phi^*_0,\phi_\tau(0)} \neq 0$.
It follows from~(P) that generating eigenvectors span the eigenspace $\clV(\bar z_0,N_0^*)$. 
Thus there exists an eigenpath~$\phi^*(v)$ corresponding to an eigenvalue~$z^*(v)$ of~$N_v^*$ such that $\phi^*(0) = \phi^*_0$.
The eigenvalue $z^*(v)$ cannot be different from $z^*_\tau(v)$ by~\eqref{E: (phi*,phi) = 0} (or in this case also Lemma~\ref{L: phi*(mu) and phi(nu) are perp} below), which completes the proof.
\end{proof}

It follows that Assumption~\ref{A: weak conjugate} holds at least when $v=0$ is not an exceptional point and $z_0$ is semisimple. 
It also clearly holds if $z_0$ is a simple eigenvalue.
This assumption is therefore generic in the sense that for random matrices it occurs with probability zero.

\begin{lemma} \label{L: def of phi*(nu)}
Given Assumption~\ref{A: weak conjugate} for the eigenvalues $z_\tau(v)$, $\tau=1,2,\ldots,m$, which split from $z_0\in\sigma_d(N_0)$, 
let~$\phi_{\tau}(v),$ $\tau=1,2\ldots,m$, be corresponding holomorphic eigenvector functions.
Then there exist anti-holomorphic eigenvector functions $\phi^*_{\tau}(v),$ 
taking values in $\clV(z^*_{\tau}(v), N_v^*),$ such that 
\begin{equation} \label{F: (phi* mu,phi nu)=delta}
  \scal{\phi^*_{\mu}(v),\phi_{\tau}(v)} = \delta_{\mu\tau}, 
\end{equation}
which are unique up to rearrangement for repeated eigenvalues.
\end{lemma}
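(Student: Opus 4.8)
The plan is to reduce to the semisimple structure of the $z_0$-group provided by Theorem~\ref{T: assum} and then perform a Gram--Schmidt-type biorthogonalisation within the (finite-dimensional) root spaces. First I would invoke Theorem~\ref{T: assum}: since Assumption~\ref{A: weak conjugate} holds for all eigenvalues of the $z_0$-group, $z_0$ is semisimple, each $z_\tau(v)$ is analytic near $v=0$, and the repeated eigenvalues among $z_1(v),\ldots,z_m(v)$ remain semisimple for all small $v\neq 0$. Dually, applying Theorem~\ref{T: assum} to $N_v^*$, the eigenvalues $z^*_\tau(v)=\overline{z_\tau(v)}$ are anti-holomorphic near $v=0$, and there exist anti-holomorphic eigenvector functions $\psi_\tau(v)$ spanning $\clV(z^*_\tau(v),N_v^*)$ for each $\tau$. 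The goal is to replace the $\psi_\tau$ by linear combinations $\phi^*_\tau$ satisfying the biorthogonality~\eqref{F: (phi* mu,phi nu)=delta}.

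The key observation is the block structure of the pairing matrix. For $\mu$ and $\tau$ with $z_\mu\not\equiv z_\tau$, equation~\eqref{E: (phi*,phi) = 0} (or Lemma~\ref{L: phi*(mu) and phi(nu) are perp}, cited below) forces $\scal{\psi_\mu(v),\phi_\tau(v)}\equiv 0$, so the matrix $G(v):=\big(\scal{\psi_\mu(v),\phi_\tau(v)}\big)_{\mu,\tau}$ is block-diagonal after grouping indices by which distinct eigenvalue branch they belong to; its entries are holomorphic in $v$. Within a single block, corresponding to a distinct branch $\tilde z(v)$ of multiplicity $k$, semisimplicity of $\tilde z(v)$ for small $v\neq 0$ together with item~\ref{I: proj of Ups-n has zero kernel} of Proposition~\ref{P: prop X} shows that the orthogonal projection of $\clV(\tilde z(v),N_v)$ onto $\clV(\overline{\tilde z(v)},N_v^*)$ is an isomorphism, hence the corresponding $k\times k$ block of $G(v)$ is invertible for all small $v\neq 0$. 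Since its determinant is holomorphic and not identically zero, it is nonzero on a punctured disc; but in fact I would argue it is nonzero at $v=0$ as well, using that $z_0$ is semisimple and that the $\phi_\tau(0)$ (resp. $\psi_\tau(0)$) restricted to that block form a basis of $\clV(z_0,N_0)$ (resp. of $\clV(\bar z_0,N_0^*)$), via Proposition~\ref{P: prop X}\ref{I: proj of Ups-n has zero kernel} once more. Thus each diagonal block of $G(v)$ is invertible with holomorphic inverse throughout a full neighbourhood of $v=0$, and so is $G(v)$ itself.

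Having established that $G(v)$ is holomorphically invertible near $v=0$, I would define
\begin{equation*}
  \phi^*_\mu(v) := \sum_{\nu} \overline{\big(G(v)^{-1}\big)_{\nu\mu}}\,\psi_\nu(v),
\end{equation*}
where the conjugation accounts for anti-holomorphy in the first slot; the block structure guarantees $\phi^*_\mu(v)\in\clV(z^*_\mu(v),N_v^*)$. A direct computation gives $\scal{\phi^*_\mu(v),\phi_\tau(v)}=\sum_\nu (G(v)^{-1})_{\nu\mu}\,G(v)_{\nu\tau}=\delta_{\mu\tau}$, which is~\eqref{F: (phi* mu,phi nu)=delta}. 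Anti-holomorphy of $\phi^*_\mu$ follows since $v\mapsto G(v)^{-1}$ is holomorphic and each $\psi_\nu$ is anti-holomorphic. For uniqueness, suppose $\tilde\phi^*_\mu(v)$ is another such family; then the difference $\tilde\phi^*_\mu(v)-\phi^*_\mu(v)$ lies in $\clV(z^*_\mu(v),N_v^*)$ and is orthogonal to every $\phi_\tau(v)$, hence orthogonal to all of $\clV(z_0,N_0)$ in the limit, which by semisimplicity and Proposition~\ref{P: prop X} forces it to vanish --- except that within a block of equal eigenvalues one only controls the span, so the freedom is exactly a permutation (or more precisely a change of basis) among the repeated branches, matching the claimed ``unique up to rearrangement for repeated eigenvalues.'' The main obstacle I anticipate is the invertibility of $G(v)$ \emph{at} $v=0$ rather than merely on a punctured neighbourhood: one must rule out that some $\phi_\tau(0)$ or $\psi_\tau(0)$ degenerates or that distinct branches collide at $v=0$ in a way that kills the relevant block determinant, and this is precisely where the hypothesis that Assumption~\ref{A: weak conjugate} holds for \emph{all} eigenvalues of the $z_0$-group (not just one) is essential, feeding into the full strength of Theorem~\ref{T: assum}.
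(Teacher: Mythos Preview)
Your approach is correct and genuinely more detailed than the paper's. The paper dispenses with existence in one line, saying it ``follows from Theorem~\ref{T: assum}'', and spends its effort on uniqueness: assuming $\psi^*_\mu$ is another family satisfying~\eqref{F: (phi* mu,phi nu)=delta}, it expands $\psi^*_\mu = \sum_\tau \bar c^\tau_\mu \phi^*_\tau$ (using that the $\phi^*_\tau$ are linearly independent and span the relevant eigenspaces) and pairs with $\phi_\tau$ to conclude $c^\tau_\mu = \delta_{\mu\tau}$. Your Gram-matrix biorthogonalisation makes the existence argument concrete where the paper treats it as a corollary of an external result. Two small corrections to your proposal: first, your block-by-block argument for invertibility of $G(0)$ does not quite work as stated, since the $\phi_\tau(0)$ with $\tau$ restricted to one block span only a proper subspace of $\clV(z_0,N_0)$ in general; instead, argue directly that the \emph{full} matrix $G(0)$ is invertible (both families are bases of the respective $m$-dimensional eigenspaces at $v=0$, and the pairing between them is non-degenerate by semisimplicity of $z_0$ via Proposition~\ref{P: prop X}), after which block-diagonality gives invertibility of each block for free. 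Second, both your uniqueness argument and the paper's actually yield \emph{complete} uniqueness once the $\phi_\tau$ are fixed --- the ``up to rearrangement'' caveat in the statement refers only to the residual freedom in labeling the given $\phi_\tau$ when some of them share an eigenvalue function, not to any genuine freedom in the conjugate family.
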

\begin{proof}
The existence of such eigenpaths follows from Theorem~\ref{T: assum}, so it remains to prove their uniqueness. 
Clearly, the vectors $\phi^*_{\mu}(v)$ are linearly independent. 
So, if $\psi^*_{\mu}(v)$ is another set with the same property then for some 
scalar anti-holomorphic functions $\bar c^{\tau}_{\mu}(v)$ we have 
\[
  \psi^*_{\mu}(v) = \phi^*_{\tau}(v) \bar c^{\tau}_{\mu}(v).
\]
Taking the scalar product of both sides with $\phi_{\tau}(v),$ we infer from \eqref{F: (phi* mu,phi nu)=delta} that
$c^{\tau}_{\mu}(v) = \delta_{\mu\tau}.$ 
\end{proof}

For a basis of vectors $\phi_{1}(0),\ldots,\phi_{m}(0)$ of $\clV(z_0,N_0),$ we say that the vectors $\phi^*_{1}(0),\ldots,\phi^*_{m}(0)$ from Lemma~\ref{L: def of phi*(nu)} are their \emph{conjugates}.

\medskip
Let's recap the situation when Assumption~\ref{A: weak conjugate} holds for all eigenvalues $z_\tau(v)$, $\tau=1,2,\ldots$ which split from a given eigenvalue~$z_0\in\sigma_d(N_0)$. 
The geometric multiplicity of~$z_0$ is equal to~$m = \dim \Upsilon^1_{z_0}(N_0,W)$ and since it follows from Theorem~\ref{T: assum} that~$z_0$ is semisimple, its algebraic multiplicity is also equal to~$m$. 
The eigenvalue $z_0 = z(0)$ is non-branching and in a neighbourhood of~$0$, it splits into single-valued functions of~$v$ only, namely $z_\tau(v)$. 
On the other hand, the inverse function~$v(z)$ may consist of multi-valued functions with~$z_0$ as a branching point.
Corresponding to $z_\tau(v)$ are analytic eigenpaths~$\phi_\tau(v)$. 
Counting multiplicities, there are~$m$ eigenvalues and~$m$ corresponding eigenpaths.
The vectors $\phi_\tau(0)$, $\tau = 1,\ldots,m$, can be chosen to form a basis of $\clV(z_0,N_0)$. 
Their conjugates $\phi^*_{\tau}(0)$, $\tau = 1,\ldots,m$, form a basis of the eigenspace $\clV(\bar z_0,N_0^*)$.

\begin{remark}
For unbounded perturbations, no changes to this section are required.
\end{remark}

\section{Order of an eigenpath}\label{S: order of e.path}
Let~$\phi(v)$ be an eigenpath of $N_v$ corresponding to an eigenvalue function $z(v).$ 
Following~\cite[Definition 3.1.2]{Aza17}, 
we say that~$\phi(v)$ \emph{has order at least~$k$} at $v=0,$ if the vectors
\begin{equation*} \label{F: order k path}
  W\phi(0), \ W \partial_v \phi(0), \ \ldots, \ W\partial^{k-2}_v \phi(0)
\end{equation*}
are orthogonal to $\clV(\bar z_0,N_0^*).$
If~$\phi^*(v)$ is an eigenpath for $N_v^*$ corresponding to an eigenvalue function~$z^*(v),$ then we also say that $\phi^*(v)$ has order at least~$k$
at~$v=0$ if the vectors 
\begin{equation*} \label{F: order k (*)path}
  W\phi^*(0), \ W\bar \partial_{v}\phi^*(0), \ \ldots, \ W \bar\partial^{k-2}_{ v}\phi^*(0)
\end{equation*}
are orthogonal to $\clV(z_0,N_0).$ 

\begin{lemma} \label{L: chi has order >= k}
Given Assumption~\ref{A: weak conjugate} for an eigenvalue function $z(v)$ of $N_v$, 
let $\phi(v)$ be an analytic eigenvector function corresponding to~$z(v)$.
Then the following assertions are equivalent (for $k\geq 1$).
\begin{enumerate}
  \item $\phi(v)$ has order at least~$k$ at~$v=0.$
  \item each vector
   $
     W\phi(0), \ W\partial_v \phi(0), \ \ldots, \ W\partial^{k-2}_v \phi(0)
   $ 
   is orthogonal to a vector $\phi^*\in \clV(\bar z_0,N_0^*),$ 
   such that $\scal{\phi^*,\phi(0)} \neq 0.$ 
  \item 
  $
    \partial_v z(0) = 0, \ \ldots, \ \partial_v^{k-1}z(0) = 0.
  $
  \item for all $j=1,\ldots,k-1,$
  \begin{equation} \label{F: (Hs-z)chi(j)=-jVchi(j-1)}
    (N_0-z_0)\partial_v^j \phi(0) = -j W\partial_v^{j-1}\phi(0).
  \end{equation}
\end{enumerate}
\end{lemma}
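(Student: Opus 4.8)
The plan is to prove the cycle of implications (i) $\Rightarrow$ (iv) $\Rightarrow$ (iii) $\Rightarrow$ (i), and separately (i) $\Leftrightarrow$ (ii), the latter being essentially immediate from Proposition~\ref{P: prop X}. The computational engine throughout is differentiation of the eigenvalue equation $N_v\phi(v) = z(v)\phi(v)$, i.e.\ $(N_0 + vW)\phi(v) = z(v)\phi(v)$. Differentiating $j$ times at $v=0$ and collecting terms via the Leibniz rule gives the key recurrence
\begin{equation*}
  (N_0 - z_0)\partial_v^j\phi(0) + j W\partial_v^{j-1}\phi(0) = \sum_{i=1}^{j}\binom{j}{i}\big(\partial_v^i z(0)\big)\,\partial_v^{j-i}\phi(0),
\end{equation*}
which is the identity that links all four conditions. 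I would first record this recurrence and the pairing trick: for any $\phi^*\in\clV(\bar z_0, N_0^*)$, applying $\scal{\phi^*,\cdot}$ to the recurrence annihilates the $(N_0-z_0)\partial_v^j\phi(0)$ term because $(N_0-z_0)^*\phi^* = 0$, leaving a relation purely among the numbers $\scal{\phi^*,W\partial_v^i\phi(0)}$, the derivatives $\partial_v^i z(0)$, and the numbers $\scal{\phi^*,\partial_v^i\phi(0)}$.

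For (iii) $\Rightarrow$ (iv): assuming $\partial_v z(0) = \cdots = \partial_v^{k-1}z(0) = 0$, the right-hand side of the recurrence vanishes for each $j\le k-1$, giving (iv) directly. For (iv) $\Rightarrow$ (i): if the recurrence reads $(N_0-z_0)\partial_v^j\phi(0) = -jW\partial_v^{j-1}\phi(0)$ for all $j\le k-1$, then pairing with any $\phi^*\in\clV(\bar z_0,N_0^*)$ yields $0 = -j\scal{\phi^*,W\partial_v^{j-1}\phi(0)}$, so each $W\partial_v^{j-1}\phi(0)$ for $j-1 = 0,\ldots,k-2$ is orthogonal to $\clV(\bar z_0,N_0^*)$, which is (i). The remaining and genuinely substantive implication is (i) $\Rightarrow$ (iii); I would do it by induction on $k$. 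The base case $k=1$ is vacuous. For the inductive step, suppose (i) holds at level $k$; by induction (using that (i) at level $k$ implies (i) at level $k-1$) we already have $\partial_v z(0) = \cdots = \partial_v^{k-2}z(0) = 0$, so the recurrence at $j = k-1$ collapses to $(N_0 - z_0)\partial_v^{k-1}\phi(0) + (k-1)W\partial_v^{k-2}\phi(0) = \big(\partial_v^{k-1}z(0)\big)\phi(0)$. Pairing with a conjugate vector $\phi^*$ having $\scal{\phi^*,\phi(0)} = 1$ (which exists by semisimplicity, i.e.\ Proposition~\ref{P: prop X}), the first term dies and the second term $\scal{\phi^*, W\partial_v^{k-2}\phi(0)}$ vanishes by hypothesis (i) at level $k$, leaving $0 = \partial_v^{k-1}z(0)$, which completes the induction.

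For (i) $\Leftrightarrow$ (ii): one direction is trivial since a conjugate vector $\phi^*$ with $\scal{\phi^*,\phi(0)}\neq 0$ lies in $\clV(\bar z_0, N_0^*)$, so orthogonality to the whole eigenspace forces orthogonality to $\phi^*$. For the converse, semisimplicity of $z_0$ (guaranteed by Assumption~\ref{A: weak conjugate} via Theorem~\ref{T: assum}) together with the existence of a basis of conjugates from Lemma~\ref{L: def of phi*(nu)} shows that $\clV(\bar z_0, N_0^*)$ is spanned by conjugate vectors, so orthogonality to one suffices provided we can, at each stage, select a conjugate not orthogonal to $\phi(0)$ — here one notes $\phi(0)$ ranges over a single generating eigenvector and Proposition~\ref{P: prop X}(ii) supplies the needed non-orthogonal partner in $\clV(\bar z_0,N_0^*)$, which is itself spanned by conjugates. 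The main obstacle I anticipate is bookkeeping in the (i) $\Rightarrow$ (iii) induction: one must be careful that "order at least $k$" at a given level really does feed the lower-order vanishing of derivatives needed to collapse the Leibniz sum, and that the single pairing against one well-chosen $\phi^*$ is legitimate rather than needing the full eigenspace — this is exactly where the simple/semisimple hypothesis and the conjugate vector formalism of Section~\ref{S: semisimple} are indispensable.
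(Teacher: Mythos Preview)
Your core approach matches the paper's: differentiate the eigenvalue equation, pair with a conjugate vector $\phi^*$, and induct. The paper's cycle is (i) $\Rightarrow$ (ii) $\Rightarrow$ (iii) $\Rightarrow$ (iv) $\Rightarrow$ (i), and your (iii) $\Rightarrow$ (iv), (iv) $\Rightarrow$ (i), and inductive (i) $\Rightarrow$ (iii) are essentially the same computations in a slightly different order. (Note your opening sentence says ``(i) $\Rightarrow$ (iv) $\Rightarrow$ (iii) $\Rightarrow$ (i)'' but you then prove the reverse arrows; the executed argument is fine.)

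There is one genuine gap: your direct argument for (ii) $\Rightarrow$ (i) does not work when $m = \dim\clV(\bar z_0,N_0^*) > 1$. You write that since $\clV(\bar z_0,N_0^*)$ is spanned by conjugate vectors, ``orthogonality to one suffices'' --- but orthogonality of $W\partial_v^j\phi(0)$ to a single $\phi^*$ with $\scal{\phi^*,\phi(0)}\neq 0$ says nothing about orthogonality to the other basis vectors of $\clV(\bar z_0,N_0^*)$. The paper does not attempt (ii) $\Rightarrow$ (i) directly for exactly this reason; it routes through (ii) $\Rightarrow$ (iii) $\Rightarrow$ (iv) $\Rightarrow$ (i). The good news is that your own (i) $\Rightarrow$ (iii) induction only ever pairs against a \emph{single} $\phi^*$ with $\scal{\phi^*,\phi(0)}\neq 0$, so it is in fact a proof of (ii) $\Rightarrow$ (iii). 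Simply relabel that step as (ii) $\Rightarrow$ (iii), drop the flawed direct (ii) $\Rightarrow$ (i), and your cycle (ii) $\Rightarrow$ (iii) $\Rightarrow$ (iv) $\Rightarrow$ (i) $\Rightarrow$ (ii) is complete and coincides with the paper's.
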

\begin{proof}
This proof follows that of~\cite[Lemma 3.1.4]{Aza17} with some adjustments. 

First we remark that the existence of a vector $\phi^*\in \clV(\bar z_0,N_0^*)$ not orthogonal to $\phi(0)$ is given by Assumption~\ref{A: weak conjugate}. 
It also follows from Theorem~\ref{T: assum} that~$z(v)$ is holomorphic at~$v=0$, which guarantees the existence of a holomorphic eigenpath~$\phi(v)$.

The implication (i) $\Rightarrow$ (ii) is obvious. 
The implication (iv) $\Rightarrow$ (i) is also obvious since 
$\clV(\bar z_0,N_0^*)$ is the kernel of $(N_0 - z_0)^*.$ 

(ii) $\Rightarrow$ (iii): 
$k-1$ times differentiating the eigenvalue equation 
$N_v\phi(v) = z(v)\phi(v)$ gives the equality 
\begin{equation} \label{F: Diff-ed eigenvalue equation(0)}
  (k-1) W \phi^{(k-2)}(v) + N_v \phi^{(k-1)}(v) = \sum_{j=0}^{k-1} \binom{k-1}{j} z^{(j)}(v) \phi^{(k-1-j)}(v).
\end{equation}
Here we let $v = 0$ and take the scalar product of both sides with~$\phi^*.$ 
After cancelling the second summand of the left hand side with the first summand of the right hand side, we obtain the equality
\begin{equation} \label{F: Diff-ed eigenvalue equation}
  (k-1)\scal{\phi^*,W \phi^{(k-2)}(0)} 
     = \sum_{j=1}^{k-1} \binom{k-1}{j} z^{(j)}(0) \scal{\phi^*,\phi^{(k-1-j)}(0)}.
\end{equation}
If $k=2$ then 
\begin{equation*} \label{F: (phi*,W phi)=z'(s)}
  \scal{\phi^*,W \phi(0)} = z'(0) \scal{\phi^*,\phi(0)}.
\end{equation*}
Since by the premise $\scal{\phi^*,\phi(0)} \neq 0,$ 
this equality implies the assertion in the case $k=2.$ 
Assume that the claim holds for $k < n.$ Then from 
\eqref{F: Diff-ed eigenvalue equation} with $k = n,$ using the induction assumption, we get
\[
  (n-1)\scal{\phi^*,W \phi^{(n-2)}(0)} = z^{(n-1)}(0) \scal{\phi^*,\phi(0)}. 
\]
Since by the premise $\scal{\phi^*,W \phi^{(n-2)}(0)}=0$ and $\scal{\phi^*,\phi(0)}\neq 0,$
this gives~$z^{(n-1)}(0) = 0.$ 

\smallskip
(iii) $\Rightarrow$ (iv): 
We prove the claim for $j=k-1$ but the same proof clearly works for the other values of $j$.
Letting $v=0$ in \eqref{F: Diff-ed eigenvalue equation(0)} gives the equality 
\[
  (k-1) W \phi^{(k-2)}(0) + N_{0} \phi^{(k-1)}(0) 
         = \sum_{j=0}^{k-1} \binom{k-1}{j} z^{(j)}(0) \phi^{(k-1-j)}(0).
\]
By the premise, the right hand side simplifies to~$z(0) \phi^{(k-1)}(0) = z_0 \phi^{(k-1)}(0).$
Hence,
\[
  (N_0-z_0) \phi^{(k-1)}(0) = - (k-1) W \phi^{(k-2)}(0).
\]
\end{proof}


\begin{lemma} \label{L: this is enough} 
Let $k\geq 2,$ and vectors $\phi_0, \phi_1, \ldots, \phi_{k-1}$
be such that for all $j=1,2,\ldots,k-1$
\begin{equation} \label{F: (H-z)phi(j)=-j phi(j-1)}
  (N_0-z_0)\phi_{j} = -j W\phi_{j-1},
\end{equation}
and $\phi_0$ is an eigenvector of $N_0$ corresponding to $z_0.$
Then 
\begin{enumerate}
   \item 
   $
     \phi_0, \ \phi_1, \ \ldots, \ \phi_{k-1}, \ 
   $
   are resonance vectors of orders respectively $1,2,\ldots,k,$
   \item 
   $
     \bfA_{z_0}(N_0,W) \phi_{j} = j \phi_{j-1},
   $
   for any $j=1,2,\ldots,k-1,$
   \item $\phi_0$ is a resonance vector of depth at least~$k-1.$ 
\end{enumerate}
\end{lemma}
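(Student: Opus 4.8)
The plan is to establish the three claims in sequence, since each builds on the previous. Let $\bfA = \bfA_{z_0}(N_0,W) = K_{-1}W$ and recall from earlier in the excerpt that $\bfA$ is a nilpotent operator which, by the displayed identity $\bfA_{z_0}(N_0,W)\Upsilon^k_{z_0}(N_0,W) = \Upsilon^{k-1}_{z_0}(N_0,W)$, decreases the order of a resonance vector by exactly $1$. Recall also the characterisation of resonance vectors via equation~\eqref{F: [1-vRW]k phi=0}: a vector $\chi$ is a resonance vector of the triple $(z_0,N_0,W)$ of order $\leq k$ iff $\big[1 - v R_{z_0}(N_v)W\big]^k\chi = 0$ for some (hence any) $v$ with $z_0\in\rho(N_v)$.

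\emph{Step 1 (claim (ii) first, then (i)).} I would actually prove (ii) before (i), as it is the algebraic heart of the matter and (i) follows from it. The idea is to feed the recursion \eqref{F: (H-z)phi(j)=-j phi(j-1)} into the resolvent. Fix $v$ with $z_0\in\rho(N_v)$ and rewrite \eqref{F: (H-z)phi(j)=-j phi(j-1)} as $(N_v - z_0)\phi_j = vW\phi_j - jW\phi_{j-1}$, so that $\phi_j = vR_{z_0}(N_v)W\phi_j - jR_{z_0}(N_v)W\phi_{j-1}$, i.e. $(1 - vR_{z_0}(N_v)W)\phi_j = -jR_{z_0}(N_v)W\phi_{j-1}$. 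Iterating this down to $\phi_0$, and using that $(1 - vR_{z_0}(N_v)W)\phi_0 = 0$ (equivalently $R_{z_0}(N_v)W\phi_0 = v^{-1}\phi_0$) since $\phi_0$ is an eigenvector of $N_0$, one obtains $(1 - vR_{z_0}(N_v)W)^{j+1}\phi_j = 0$ while $(1 - vR_{z_0}(N_v)W)^{j}\phi_j \neq 0$ (the latter because unwinding the recursion expresses it in terms of $\phi_0 \neq 0$ up to a nonzero scalar — this non-vanishing is the point that needs a little care). That gives (i), that $\phi_j$ is a resonance vector of order exactly $j+1$. For (ii), extract the Laurent coefficient: from $(1 - vR_{z_0}(N_v)W)\phi_j = -jR_{z_0}(N_v)W\phi_{j-1}$, multiply by $v$ and apply $\frac{1}{2\pi i}\oint_{C(0)}(\,\cdot\,)\,\frac{dv}{v}$ type contour integration, or more directly substitute the Laurent expansion $R_{z_0}(N_v) = \sum_j v^j K_{j+1}$ and match the coefficient of $v^{-1}$; since $R_{z_0}(N_v)W$ has residue $K_{-1}W = \bfA$ at $v=0$ while $v\cdot(1 - vR_{z_0}(N_v)W)$ has no pole, comparing the $v^0$ coefficients (or using Lemma~\ref{L: K(-k)WK(-j)=K(-k-j)} and the identities \eqref{F: Pz=K0*W, Qz=W*K0}, \eqref{F: Ak=K(-k)W and Bk=WK(-k)}) yields $\bfA\phi_j = j\phi_{j-1} + (\text{terms annihilated appropriately})$. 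The cleanest route is: apply $K_{-1}W = \bfA$ to the relation $(N_0 - z_0)\phi_j = -jW\phi_{j-1}$ after noting $K_{-1}W(N_0-z_0) = K_{-1}(N_v - z_0 - vW)W = \ldots$, using that $K_{-1}(N_v - z_0)$ relates to higher Laurent coefficients that vanish against resonance vectors; alternatively, just verify $\bfA\phi_j = j\phi_{j-1}$ inductively using $\bfA$ decreases order by one and the explicit action on the eigenvector $\phi_0$ (where $\bfA\phi_0 = 0$ since $\phi_0\in\Upsilon^1$).

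\emph{Step 2 (claim (iii)).} This is immediate from (i) and (ii): by (i), $\phi_{k-1}$ is a resonance vector of order $k$, so it lies in $\Upsilon^k_{z_0}(N_0,W) = \im\bfA^{?}$... more precisely, by (ii) we have $\phi_0 = \frac{1}{(k-1)!}\bfA^{k-1}\phi_{k-1}$ (iterating $\bfA\phi_j = j\phi_{j-1}$ from $j = k-1$ down to $j=1$ gives $\bfA^{k-1}\phi_{k-1} = (k-1)!\,\phi_0$). Hence $\phi_0 \in \im\bfA^{k-1}_{z_0}$, which by definition \eqref{F: depth of res vector} means $\phi_0$ has depth at least $k-1$.

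\emph{Main obstacle.} The delicate point is the non-degeneracy in (i): showing that $\phi_j$ has order \emph{exactly} $j+1$ and not less — equivalently that $(1 - vR_{z_0}(N_v)W)^{j}\phi_j \neq 0$, or in the language of (ii) that the chain $\phi_0,\ldots,\phi_{k-1}$ is genuinely a Jordan-type chain and does not collapse. This hinges on $\phi_0 \neq 0$ propagating upward through the recursion; the recursion \eqref{F: (H-z)phi(j)=-j phi(j-1)} determines $\phi_j$ only up to adding an element of $\ker(N_0 - z_0) = \clV(z_0,N_0)$, so the $\phi_j$ are not unique, but the point is that \emph{any} solutions form such a chain. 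I expect the rigorous treatment of this non-vanishing, and the careful bookkeeping of which Laurent coefficients annihilate resonance vectors of a given order (via Lemmas~\ref{L: K(-k)WK(-j)=K(-k-j)}, \ref{L: K(k)WK(m)=0}, and the structure of $\Upsilon^k_{z_0}$), to be the only non-routine part; everything else is a direct substitution into the resolvent identity \eqref{F: res id}.
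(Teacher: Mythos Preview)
Your proposal is correct and follows essentially the same route as the paper: rewrite \eqref{F: (H-z)phi(j)=-j phi(j-1)} as $(1 - vR_{z_0}(N_v)W)\phi_j = -jR_{z_0}(N_v)W\phi_{j-1}$, deduce (i) inductively, obtain (ii) by contour integration over a small circle around $v=0$, and read off (iii) from iterating (ii). The ``obstacle'' you flag---that the order of $\phi_j$ is \emph{exactly} $j+1$---is not actually delicate: the paper dispatches it by observing that $R_{z_0}(N_v)W$ preserves the order of a resonance vector while $(1 - vR_{z_0}(N_v)W)$ lowers it by exactly one (both facts are immediate from the definition \eqref{F: [1-vRW]k phi=0} and commutativity), so the induction step gives the exact order directly without needing to unwind the recursion down to $\phi_0$.
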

\begin{proof}
For~$v$ such that $z_0 \notin \sigma_d(N_v),$ 
the equality \eqref{F: (H-z)phi(j)=-j phi(j-1)} can be rewritten as 
\begin{equation} \label{F: [1+RV]phi(j)=-RV phi(j-1)}
  \big(1 - v R_{z_0}(N_v)W\big) \phi_{j} = -j R_{z_0}(N_v)W \phi_{j-1}.
\end{equation}
The vector $\phi_0$ has order $1$ since it is an eigenvector. 
It follows from the definition~\eqref{F: [1-vRW]k phi=0} of vectors of order $k$ that the operator $R_{z_0}(N_v)W$ preserves the order of vectors, while the operator $(1 - vR_{z_0}(N_v))$ decreases the order of vectors by 1.
Therefore the item~(i) follows by induction from \eqref{F: [1+RV]phi(j)=-RV phi(j-1)}.
According to the definitions \eqref{F: Ak=K(-k)W and Bk=WK(-k)} of $\bfA_{z_0}(N_0,W)$ and \eqref{F: Pz=K0*W, Qz=W*K0} of $P_{z_0}(N_0,W),$ the equality \eqref{F: [1+RV]phi(j)=-RV phi(j-1)} also implies~(ii) after taking contour integrals over a small circle enclosing~$0$ in the~$v$-plane. 
Finally, (ii) plainly implies~(iii) by the definition of depth \eqref{F: depth of res vector}. 
\end{proof}

\begin{lemma} \label{L: phi has order >=k, then ...} 
Given Assumption~\ref{A: weak conjugate} for an eigenvalue function $z(v)$ of $N_v$, with a corresponding analytic eigenvector function $\phi(v)$, 
if the path~$\phi(v)$ has order at least~$k\geq 2$ at~$0,$ then
\begin{enumerate}
   \item  
   $
     \phi(0), \ \phi'(0), \ \ldots, \ \phi^{(k-1)}(0) \ 
   $
   are resonance vectors of orders respectively $1,2,\ldots,k,$
   \item for any $j=1,\ldots,k-1,$
   $
     \bfA_{z_0}(N_0,W) \partial_v^j \phi(0) = j \partial_v^{j-1} \phi(0),
   $
   \item $\phi(0)$ has depth at least~$k-1.$ 
\end{enumerate}
\end{lemma}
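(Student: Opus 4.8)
The plan is to obtain this lemma as an immediate consequence of the two preceding lemmas, which together already carry out all of the substantive work; the present statement is essentially just a repackaging of their outputs in terms of the derivatives of the eigenpath.

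First I would apply the equivalence (i)~$\iff$~(iv) of Lemma~\ref{L: chi has order >= k}. Its hypotheses --- namely Assumption~\ref{A: weak conjugate} for the eigenvalue function $z(v)$ together with the analyticity of the eigenpath $\phi(v)$ --- are precisely those in force here, so from the assumption that $\phi(v)$ has order at least~$k$ at $v=0$ we obtain
\[
  (N_0 - z_0)\,\partial_v^j \phi(0) = -\,j\, W\, \partial_v^{j-1}\phi(0), \qquad j = 1, 2, \ldots, k-1.
\]
Next I would set $\phi_j := \partial_v^j \phi(0)$ for $j = 0, 1, \ldots, k-1$. Since $\phi_0 = \phi(0)$ is an eigenvector of $N_0$ corresponding to $z_0$ and the identities just displayed are exactly the recursion~\eqref{F: (H-z)phi(j)=-j phi(j-1)}, the tuple $\phi_0, \phi_1, \ldots, \phi_{k-1}$ satisfies the hypotheses of Lemma~\ref{L: this is enough} (recall $k \geq 2$ here). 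Invoking that lemma and translating back via $\phi_j = \partial_v^j\phi(0)$, its conclusions (i), (ii), (iii) become word for word the three assertions (i), (ii), (iii) to be proved.

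I do not anticipate any genuine obstacle. The only point deserving a moment's attention is purely bookkeeping: that the phrase ``order at least~$k$'' in the present statement refers to the same definition (from the start of Section~\ref{S: order of e.path}, in terms of orthogonality of $W\phi(0), W\partial_v\phi(0), \ldots, W\partial_v^{k-2}\phi(0)$ to $\clV(\bar z_0, N_0^*)$) as the one used in Lemma~\ref{L: chi has order >= k}, so that the passage from that orthogonality condition to the differential recursion above requires nothing beyond citing Lemma~\ref{L: chi has order >= k}. With that noted, the proof is simply the two-line chain ``Lemma~\ref{L: chi has order >= k} gives the recursion; Lemma~\ref{L: this is enough} then gives (i)--(iii)''.
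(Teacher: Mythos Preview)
Your proposal is correct and matches the paper's own proof essentially verbatim: the paper simply cites Lemma~\ref{L: chi has order >= k} to obtain the recursion~\eqref{F: (Hs-z)chi(j)=-jVchi(j-1)} and then invokes Lemma~\ref{L: this is enough}.
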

\begin{proof}
By Lemma~\ref{L: chi has order >= k}, 
a path~$\phi(v)$ of order $\geq k$ at~$0$ obeys \eqref{F: (Hs-z)chi(j)=-jVchi(j-1)}.
So, the claim follows from Lemma~\ref{L: this is enough}.
\end{proof}


\begin{prop} \label{P: TFAE phi* has order >=k} 
Suppose Assumption~\ref{A: weak conjugate} holds for an eigenvalue function $z(v)$ of $N_v$, and~$\phi(v)$ is a corresponding analytic eigenpath.
Let also $\phi^*(v)$ be an anti-holomorphic eigenpath corresponding to the anti-holomorphic eigenvalue function~$z^*(v)$ of $N_v^*.$ 
Then the following assertions are equivalent.
\begin{enumerate}
  \item $\phi(v)$ has order at least~$k$ at~$v=0.$
  \item
   $
     W\phi^*(0), \ W\bar \partial_{v}\phi^*(0), \ \ldots, \ W\bar\partial^{k-2}_{ v}\phi^*(0)
   $ 
   are orthogonal to a vector $\phi \in \clV(z_0,N_0),$ such that $\scal{\phi^*(0),\phi} \neq 0.$
  \item $\phi^*(v)$ has order at least~$k$ at~$0.$ 
  \item 
  $
    \bar \partial_{v}z^*(0) = 0, \ \ldots, \ \bar\partial^{k-1}_{ v}z^*(0) = 0.
  $
  \item 
   $ (N_0^* - \bar z_0)\bar \partial_{v}^j \phi^*(0) = -j W\bar \partial_{v}^{j-1} \phi^*(0), $
  for all $j=1,\ldots,k-1.$
\end{enumerate}
\end{prop}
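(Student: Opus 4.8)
The plan is to mirror the structure of Lemma~\ref{L: chi has order >= k}, exploiting the near-perfect symmetry between the pair $(N_v,\phi(v))$ and the conjugate pair $(N_v^*,\phi^*(v))$. The key observation is that Assumption~\ref{A: weak conjugate} holds for $z(v)$ if and only if it holds for $z^*(v)$: a generating eigenvector of $N_v^*$ corresponding to $z^*(v)$ is exactly a generating eigenvector of $(N_v^*)^* = N_v$ corresponding to $z(v)$ after conjugating, and the non-orthogonality condition is symmetric in its two arguments. Consequently Theorem~\ref{T: assum} applies to $z^*(v)$ as well, so $z^*(v)$ is anti-holomorphic at $v=0$, and the anti-holomorphic eigenpath $\phi^*(v)$ exists. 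I would begin by recording this symmetry explicitly, since it is what licenses applying all the machinery built for $\phi(v)$ to $\phi^*(v)$.

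With that in place, the equivalences (iii)~$\iff$~(iv)~$\iff$~(v) are simply Lemma~\ref{L: chi has order >= k} applied to the conjugate data: replace $N_0$ by $N_0^*$, $z_0$ by $\bar z_0$, $\phi(v)$ by $\phi^*(v)$, and holomorphic derivatives $\partial_v$ by anti-holomorphic derivatives $\bar\partial_v$. One must check that the defining condition of ``order at least $k$'' for a $*$-eigenpath (orthogonality of $W\phi^*(0),\ldots,W\bar\partial_v^{k-2}\phi^*(0)$ to $\clV(z_0,N_0)$, which is $\clV(\bar{\bar z_0}, (N_0^*)^*)$) matches the definition used in that lemma, which it does verbatim. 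Similarly (ii) is the conjugate form of assertion (ii) of Lemma~\ref{L: chi has order >= k}, with the non-orthogonal witness supplied by Assumption~\ref{A: weak conjugate} applied to $z^*(v)$; its equivalence with (iii)–(v) is thus immediate from that lemma. The only genuinely new content is the equivalence of (i), a statement about $\phi(v)$, with (iii), a statement about $\phi^*(v)$. For this I would use the characterisation already proved in Lemma~\ref{L: chi has order >= k}: (i) is equivalent to $\partial_v z(0) = \cdots = \partial_v^{k-1} z(0) = 0$, and by the elementary fact recalled in the Preliminaries that $\partial_s^j z(s) = 0 \iff \bar\partial_s^j \bar z(s) = 0$, this holds iff $\bar\partial_v z^*(0) = \cdots = \bar\partial_v^{k-1} z^*(0) = 0$, which is precisely (iv), hence (iii).

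The step I expect to require the most care is verifying that Assumption~\ref{A: weak conjugate} genuinely transfers to $z^*(v)$ and that all of Theorem~\ref{T: assum}'s conclusions (analyticity, semisimplicity of the $z_0$-group, the existence of the conjugate eigenpaths of Lemma~\ref{L: def of phi*(nu)}) are available for the conjugate data — this underpins the well-posedness of even stating (iii) and (v). The rest is bookkeeping: one checks that every lemma invoked (Lemma~\ref{L: chi has order >= k}, and the $\partial^j z = 0 \iff \bar\partial^j \bar z = 0$ remark) is symmetric under the holomorphic/anti-holomorphic swap, and assembles the cycle of implications (i)~$\Rightarrow$~(iii)~$\Rightarrow$~(iv)~$\Rightarrow$~(v)~$\Rightarrow$~(iii)~$\Rightarrow$~(ii)~$\Rightarrow$~(i), or a more economical arrangement, citing Lemma~\ref{L: chi has order >= k} for the conjugate pair wherever possible rather than reproving anything.
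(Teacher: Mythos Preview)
Your proposal is correct and follows essentially the same route as the paper: the paper also links (i) to (iv) via the elementary fact $\partial_v^j z(0)=0 \iff \bar\partial_v^j z^*(0)=0$, and obtains the remaining equivalences (ii)$\iff$(iii)$\iff$(iv)$\iff$(v) by invoking Lemma~\ref{L: chi has order >= k} for the conjugate data $(N_v^*,z^*(v),\phi^*(v))$, noting that Assumption~\ref{A: weak conjugate} is conjugate-invariant. Your more careful discussion of why the Assumption transfers to $z^*(v)$ is a welcome elaboration of what the paper records in a single phrase.
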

\begin{proof} 
Since $z^*(v) = \overline{z(v)},$ it follows that
$\partial_v^j z(0) = 0$ if and only if $\bar \partial_{v}^j z^*(0) = 0.$
Hence, (iv) is equivalent to Lemma~\ref{L: chi has order >= k}(iii), 
and thus also to (i). 
The equivalence of the other items is proved as in Lemma~\ref{L: chi has order >= k}, 
considering that, by conjugate invariance, 
Assumption~\ref{A: weak conjugate} holds for the eigenvalue $z^*(v)$ of $N_v^*$. 
\end{proof}

It is worth emphasising that an anti-holomorphic eigenpath $\phi^*(v)$
corresponding to~$\phi(v)$ as in this proposition may be non-unique, where by non-uniqueness we of course mean that another such eigenpath may exist which is not a scaling of $\phi^*(v)$. 
Proposition~\ref{P: TFAE phi* has order >=k} holds for all such eigenpaths. 

\begin{lemma} \label{L: last lemma} 
Under the premise of Theorem~\ref{T: d[nu]'s are equal}, let $k\geq 2$ and let $z(v)$ be an eigenvalue function
of $N_v$ with a corresponding eigenvector function $\phi(v).$  
If~$\phi(0)$ has depth at least $k-1,$ then~$\phi(v)$ has order at least~$k$ at~$0.$
\end{lemma}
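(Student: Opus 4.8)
The plan is to reduce the statement to the claim that the eigenvalue function $z(v)$ vanishes to order $\geq k$ at $v=0$, i.e.\ $z(v)-z_0=O(v^{k})$, and then to deduce this by manufacturing an explicit ``model'' eigenpath out of the resonance-vector structure at $z_0$ and comparing it with the true eigenpath $\phi(v)$. By Lemma~\ref{L: chi has order >= k} (equivalence of (i) and (iii)), it suffices to show $w(v):=z(v)-z_0=O(v^{k})$. If $w\equiv0$ there is nothing to prove, so suppose $w(v)=w_{p}v^{p}+O(v^{p+1})$ with $w_{p}\neq0$ and, towards a contradiction, $1\leq p\leq k-1$. Since $\phi(0)\neq0$ and, by hypothesis, $\phi(0)\in\im\bfA^{k-1}_{z_0}(N_0,W)$, while $\bfA^{d}_{z_0}(N_0,W)$ has image $\bfA^{d}_{z_0}\Upsilon_{z_0}(N_0,W)=\{0\}$ (as $\bfA^{d}_{z_0}=\bfA^{d}_{z_0}P_{z_0}$, $\im P_{z_0}=\Upsilon_{z_0}=\Upsilon^{d}_{z_0}$, and $\bfA_{z_0}\Upsilon^{j}_{z_0}=\Upsilon^{j-1}_{z_0}$ for $1\leq j\leq d$), it follows that $k\leq d$, where $d$ is the order of the triple $(z_0,N_0,W)$; if instead $k>d$ the hypothesis is vacuous and there is nothing to prove.

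For the model path, iterate $\bfA_{z_0}\Upsilon^{j}_{z_0}=\Upsilon^{j-1}_{z_0}$ to get $\bfA^{k-1}_{z_0}\Upsilon^{k}_{z_0}=\Upsilon^{1}_{z_0}=\clV(z_0,N_0)$, and pick $\xi\in\Upsilon^{k}_{z_0}(N_0,W)$ with $\bfA^{k-1}_{z_0}\xi=\phi(0)$; set
\[
  \tilde\phi(v):=\sum_{j=0}^{k-1}v^{j}\,\bfA^{k-1-j}_{z_0}(N_0,W)\,\xi ,
\]
so that $\tilde\phi(0)=\phi(0)$. I would then establish the identity
\[
  (N_0+vW-z_0)\,\tilde\phi(v)=v^{k}W\xi .
\]
This rests on the relation $(N_0-z_0)\sigma=-W\bfA_{z_0}(N_0,W)\sigma$, valid for every $\sigma\in\Upsilon_{z_0}(N_0,W)$: extracting the coefficient of $v^{-1}$ in $(N_0+vW-z_0)R_{z_0}(N_v)=1$ yields $(N_0-z_0)K_0=-WK_{-1}$, while $\sigma=K_0W\sigma$ for $\sigma\in\Upsilon_{z_0}=\im K_0$ by idempotency of $P_{z_0}=K_0W$, so $(N_0-z_0)\sigma=(N_0-z_0)K_0\,W\sigma=-WK_{-1}\,W\sigma=-W\bfA_{z_0}\sigma$. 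Substituting this for $\sigma=\bfA^{k-1-j}_{z_0}\xi$, telescoping, and using $\bfA^{k}_{z_0}\xi=0$ (as $\xi\in\Upsilon^{k}_{z_0}$), the identity follows.

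Now put $\eta(v):=\phi(v)-\tilde\phi(v)$, which is holomorphic with $\eta(0)=0$ and, since $N_v\phi(v)=z(v)\phi(v)$,
\[
  (N_0+vW-z_0)\eta(v)=w(v)\phi(v)-v^{k}W\xi .
\]
Writing $\eta(v)=\sum_{n\geq1}v^{n}\eta_{n}$ and comparing coefficients, and using $p<k$ (so the right side has no $v^{n}$-term for $n<p$ and has $v^{p}$-term $w_{p}\phi(0)$), one gets $\eta_{1}\in\clV(z_0,N_0)$, the relations $(N_0-z_0)\eta_{n}=-W\eta_{n-1}$ for $2\leq n\leq p-1$, and $(N_0-z_0)\eta_{p}+W\eta_{p-1}=w_{p}\phi(0)$. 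When $p\geq2$, Lemma~\ref{L: this is enough} (applied to $\eta_{1},\dots,\eta_{p-1}$ after the usual factorial rescaling; trivial when $p=2$) shows $\eta_{p-1}$ is a resonance vector of order $p-1\leq k-2\leq d-1$, hence of depth $\geq1$ because $\Upsilon^{d-1}_{z_0}=\bfA_{z_0}\Upsilon^{d}_{z_0}\subseteq\im\bfA_{z_0}$; writing $\eta_{p-1}=\bfA_{z_0}\rho$ with $\rho\in\Upsilon_{z_0}$ and using the identity $\scal{\bfA_{\bar z_0}(N_0^{*},W)f,Wg}=\scal{f,W\bfA_{z_0}(N_0,W)g}$ together with the fact that $\bfA_{\bar z_0}(N_0^{*},W)$ annihilates $\clV(\bar z_0,N_0^{*})=\Upsilon^{1}_{\bar z_0}(N_0^{*},W)$, we get $\scal{\phi^{*},W\eta_{p-1}}=0$ for all $\phi^{*}\in\clV(\bar z_0,N_0^{*})$, i.e.\ $W\eta_{p-1}\in\im(N_0-z_0)$ (the range is closed and equals $\clV(\bar z_0,N_0^{*})^{\perp}$). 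When $p=1$, $\eta_{p-1}=\eta_{0}=0$ and the same conclusion is trivial. In either case $w_{p}\phi(0)=(N_0-z_0)\eta_{p}+W\eta_{p-1}\in\im(N_0-z_0)$; but under the standing hypotheses $z_0$ is semisimple (Theorem~\ref{T: assum}), so by Proposition~\ref{P: prop X} the intersection $\clV(z_0,N_0)\cap\im(N_0-z_0)$ is trivial, and $\phi(0)\neq0$ forces $w_{p}=0$ --- a contradiction. Hence $p\geq k$, which is the assertion. I expect the delicate step to be this last comparison --- in particular, identifying the low-order Taylor coefficients of $\eta$ as resonance vectors of small enough order that $W\eta_{p-1}$ lies in the range of $N_0-z_0$; the rest is mechanical once the identity for $\tilde\phi$ is available. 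For unbounded perturbations one replaces $R_{z_0}(N_v)$, $W$, and $N_0-z_0$ throughout by $T_{z_0}(N_v)$, $J$, and the corresponding sectorial forms, as in the remarks of the preceding sections.
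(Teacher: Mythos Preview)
Your proof is correct but takes a genuinely different route from the paper's. The paper argues by induction on $k$ using the conjugate eigenpath $\phi^*(v)$: assuming the claim for orders $<k$, both $\phi(v)$ and $\phi^*(v)$ have order $\geq k-1$ (the latter via Proposition~\ref{P: TFAE phi* has order >=k}), and pairing the $(k-1)$st derivative of the conjugate eigenvalue equation with $\phi(0)$ gives $Cz^{(k-1)}(0) = (k-1)\scal{\bar\partial_v^{k-2}\phi^*(0), W\bfA_{z_0}^{k-1}f}$, which vanishes because $\bar\partial_v^{k-2}\phi^*(0)$ has order $k-1$ and is therefore annihilated by $\bfA_{\bar z_0}^{k-1}$. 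You instead build a model path $\tilde\phi$ from a Jordan chain, derive the exact identity $(N_v-z_0)\tilde\phi(v)=v^kW\xi$ via $(N_0-z_0)K_0=-WK_{-1}$, and compare Taylor coefficients of the defect $\eta=\phi-\tilde\phi$; the contradiction comes from the semisimplicity fact $\clV(z_0,N_0)\cap\im(N_0-z_0)=\{0\}$ rather than from derivatives of $\phi^*(v)$. Your approach is more algebraic and avoids Proposition~\ref{P: TFAE phi* has order >=k} and the conjugate eigenpath entirely, which is a genuine simplification of the hypotheses actually used inside the argument; the paper's approach, by contrast, integrates more tightly with the conjugate-path machinery of Section~\ref{S: semisimple} and makes the duality between $\phi$ and $\phi^*$ explicit.

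One small point: your appeal to Lemma~\ref{L: this is enough} for the $\eta_n$ is slightly loose, since that lemma assumes $\phi_0$ is a (nonzero) eigenvector whereas $\eta_1$ may vanish, and the factorial rescaling you mention is needed to match the factor $j$ in its hypothesis. The conclusion you need, $\eta_{p-1}\in\Upsilon_{z_0}^{p-1}$, follows anyway from the recursion $(1-vR_{z_0}(N_v)W)\eta_n=-R_{z_0}(N_v)W\eta_{n-1}$ by the same one-line induction used in the proof of that lemma, so this is purely cosmetic.
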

\begin{proof} 
This proof is an adjustment to this setting of the proofs of 
\cite[Lemma 3.1.6]{Aza17} and the implication $(ii) \Rightarrow (i)$ of~\cite[Theorem 2.3.1]{Aza17}. 
One of the differences is the use of a conjugate vector. 

We agree to write $\bfA_{z_0}$ for $\bfA_{z_0}(N_0,W)$ and $\bfA_{\bar z_0}$ for $\bfA_{\bar z_0}(N_0^*,W).$
Let $k=2.$ By the definition~\eqref{F: depth of res vector} of depth, there exists a vector $\chi$ such that 
$\bfA_{z_0} \chi = \phi(0).$ So, for any $f \in \clV(\bar z_0,N_0^*),$
we have 
\[
  \scal{f,W\phi(0)} = \scal{f,W\bfA_{z_0} \chi} = 
  \scal{\bfA_{\bar z_0}f,W\chi} = \scal{0,W\chi} = 0,
\]
and so, $\phi(v)$ has order $\geq 2$ at $0.$

Assume the claim for integers less than $k$ and let $\phi(0)$ be of depth $\geq k-1.$ 
Then by the induction assumption the path $\phi(v)$ at~$0$ has order $\geq k-1$. 
Choose an eigenpath $\phi^*(v)$ of $N_v^*,$ corresponding to~$z^*(v),$ so that 
\[
  C := \scal{\phi^*(0),\phi(0)} \neq 0,
\]
which can be done according to Assumption~\ref{A: weak conjugate}. 
By Proposition~\ref{P: TFAE phi* has order >=k}, the eigenpath $\phi^*(v)$ also has order $\geq k-1$.
In particular, 
\[
 \bar\partial_{v}z^*(0) = \ldots = \bar\partial_{v}^{k-1}z^*(0) = 0.
\] 
We combine this with an equation similar to~\eqref{F: Diff-ed eigenvalue equation}, which is obtained by $k-1$ times differentiating the eigenvalue equation $N_v^*\phi^*(v) = z^*(v)\phi^*(v)$ with respect to $\bar v$ at the point $\bar v = 0$ and taking the scalar product with $\phi(0)$.
The result is
\[
  (k-1) \scal{\bar\partial_{v}^{k-2} \phi^*(0), W\phi(0)} = \scal{\bar\partial_{v}^{k-1}z^*(0)\phi^*(0),\phi(0)} = Cz^{(k-1)}(0),
\]
where in the last equality we used the fact that $\bar\partial_{v}\bar z$ is the conjugate of $\partial_v z$.  
Since $\phi(0)$ has depth at least $k-1,$ there is some $f$ so that 
$
  \bfA^{k-1}_{z_0} f = \phi(0).
$
Hence, 
\begin{equation*}
 \begin{split}
  C z^{(k-1)}(0) & = (k-1) \scal{\bar\partial_{v}^{k-2} \phi^*(0), W \bfA^{k-1}_{z_0} f} 
  \\ & = (k-1) \scal{\bfA^{k-1}_{\bar z_0} \bar\partial_{v}^{k-2} \phi^*(0), Wf}. 
 \end{split}
\end{equation*}
Since $\phi^*(v)$ at~$0$ has order $\geq k-1,$ the vector $\bar\partial_{v}^{k-2} \phi^*(0)$ has order~$k-1$ by Lemma~\ref{L: phi has order >=k, then ...}(i). 
Therefore
$
  \bfA^{k-1}_{\bar z_0} \bar\partial_{v}^{k-2} \phi^*(0) = 0.
$
Combining this with the previous display gives~$C z^{(k-1)}(0) = 0,$ and since $C$ is non-zero, we have $z^{(k-1)}(0) = 0.$ 
Hence, by Lemma~\ref{L: chi has order >= k}, the path $\phi(v)$ at~$0$ has order $\geq k.$ 
\end{proof}

\begin{cor}\label{C: depths equal}
Suppose Assumption~\ref{A: weak conjugate} holds for an eigenvalue function~$z(v)$ of~$N_v$. 
Let~$\phi(v)$ be a holomorphic eigenpath corresponding to~$z(v)$ and let $\phi^*(v)$ be an anti-holomorphic eigenpath corresponding to the eigenvalue~$z^*(v)$ of~$N^*_v.$ 
The following are equivalent statements.
\begin{enumerate}
\item The order of $\phi(v)$ at $v=0$ is equal to $k$.
\item The order of $\phi^*(v)$ at $v=0$ is equal to $k$.
\item The depth of $\phi(0)$ is equal to $k-1$. 
\item The depth of $\phi^*(0)$ is equal to $k-1$.
\end{enumerate}
\end{cor}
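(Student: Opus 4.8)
The plan is to recast the lower-bound equivalences already established into statements about exact values. The pivotal observation is that, for every $k\geq 1$, the eigenpath $\phi(v)$ has order at least $k$ at $v=0$ if and only if the resonance vector $\phi(0)$ has depth at least $k-1$. For $k=1$ both conditions hold automatically: having order at least $1$ is vacuous for any eigenpath, while $\phi(0)$ lies in $\im P_{z_0}(N_0,W)=\Upsilon_{z_0}(N_0,W)=\im\bfA^0_{z_0}(N_0,W)$ and so has depth at least $0$. For $k\geq 2$, the implication ``order $\geq k$ $\Rightarrow$ depth $\geq k-1$'' is exactly Lemma~\ref{L: phi has order >=k, then ...}(iii), and the converse ``depth $\geq k-1$ $\Rightarrow$ order $\geq k$'' is Lemma~\ref{L: last lemma}, whose hypotheses are met here since Assumption~\ref{A: weak conjugate} holds for $z(v)$. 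Since $\bfA^d_{z_0}(N_0,W)=0$ for the order $d$ of the triple, every non-zero resonance vector has depth at most $d-1$, so the order of $\phi(v)$ is at most $d$ and in particular finite.

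Given this biconditional for every $k$, I would pass from ``at least'' to ``exactly'' as follows. By definition the order of $\phi(v)$ at $0$ is the unique $k$ for which $\phi(v)$ has order $\geq k$ but not order $\geq k+1$, and the depth of $\phi(0)$ is the unique $j$ for which $\phi(0)\in\im\bfA^{j}_{z_0}(N_0,W)$ but $\phi(0)\notin\im\bfA^{j+1}_{z_0}(N_0,W)$. Applying the biconditional at $k$ and at $k+1$ then shows that the order of $\phi(v)$ equals $k$ precisely when the depth of $\phi(0)$ equals $k-1$; this is the equivalence (i)$\iff$(iii).

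For (i)$\iff$(ii) I would invoke Proposition~\ref{P: TFAE phi* has order >=k}, which states that $\phi(v)$ has order at least $k$ at $0$ if and only if $\phi^*(v)$ does; consequently the two paths have exactly the same order. Finally, (ii)$\iff$(iv) is obtained by repeating the argument of the previous two paragraphs with the triple $(z^*(v),N_v^*,W)$ in place of $(z(v),N_v,W)$. This substitution is legitimate because Assumption~\ref{A: weak conjugate} is conjugate-invariant --- it holds for the eigenvalue $z^*(v)$ of $N_v^*$ whenever it holds for $z(v)$ of $N_v$, as already exploited in the proof of Proposition~\ref{P: TFAE phi* has order >=k} --- and because Lemmas~\ref{L: phi has order >=k, then ...} and~\ref{L: last lemma} apply verbatim to $N_v^*$. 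I do not anticipate a genuine obstacle: the proof is essentially bookkeeping, and the only points requiring a moment of care are the vacuous base case $k=1$ and the clean passage from the ``order at least $k$'' formulation of the cited lemmas to the ``order exactly $k$'' formulation of the corollary.
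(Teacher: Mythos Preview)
Your proof is correct and follows essentially the same route as the paper's own proof: both rely on Lemma~\ref{L: phi has order >=k, then ...}(iii) and Lemma~\ref{L: last lemma} to pin down the depth from the order, and on Proposition~\ref{P: TFAE phi* has order >=k} (together with conjugate invariance of Assumption~\ref{A: weak conjugate}) to transfer between $\phi$ and $\phi^*$. You are simply more explicit than the paper about the base case $k=1$ and the passage from ``order at least $k$'' to ``order exactly $k$,'' which the paper leaves implicit.
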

\begin{proof}
By Proposition~\ref{P: TFAE phi* has order >=k} the eigenpaths $\phi(v)$ and $\phi^*(v)$ have the same order.  
By Lemma~\ref{L: phi has order >=k, then ...}(iii), both $\phi_\tau(0)$ and $\phi_\tau^*(0)$ have depth at least $k-1.$
Their depth is at most $k-1$ by Lemma~\ref{L: last lemma}. 
\end{proof}

\medskip
Recall that we have
$
  m := \dim \Upsilon^1_{z_0}(N_0,W)
$ 
and 
$
  n := \dim \Upsilon_{z_0}(N_0,W).
$ 
The vector space $\Upsilon_{z_0}(N_0,W)$ is invariant for the nilpotent operator $\bfA_{z_0}(N_0,W).$
The sizes of its Jordan blocks we denote $d_\tau,$ $\tau = 1,\ldots, m.$

A Jordan block is determined by an eigenpath~$\phi_\tau(v)$ corresponding to an eigenvalue~$z_\tau(v)$ which splits from~$z_0$ and satisfies Assumption~\ref{A: weak conjugate}.
Indeed, $\phi_\tau(v)$ determines a set of vectors 
\[
  \frac 1{j!} \phi_\tau^{(j)}(0), \ j=0,1,\ldots,\tilde d_\tau-1,
\]
where $\tilde d_\tau$ is the order of the eigenpath~$\phi_\tau(v)$ at $v=0$.
These vectors form a Jordan chain for $\bfA_{z_0}(N_0,W)$ by Lemma~\ref{L: phi has order >=k, then ...}(ii), which is of maximal length 
by Corollary~\ref{C: depths equal}, hence $\tilde d_\tau = d_\tau$.

Although we assume $z_0$ is non-branching, the inverse function~$v(z)$ may consist of multi-valued functions with~$z_0$ as a branching point. 
As~$z$ makes one round around~$z_0$ these different values of $v(z)$ undergo a permutation.
This permutation gives rise to a partition of the values of $v(z)$ into \emph{cycles}.
There is a correspondence (unique up to multiplicity) between the cycles and the eigenpaths, namely, if $N_v \phi_\tau(v) = z_\tau(v)\phi_\tau(v)$ then the inverse of $z_\tau(v)$ defines the cycle corresponding to~$\phi_\tau(v)$. 
Let the period of the cycle corresponding to $\phi_\tau(v)$ be denoted $\hat d_\tau$. 
The equivalence of items (i) and (iii) in Lemma~\ref{L: chi has order >= k} shows that 
$
  \hat d_\tau = \tilde d_\tau.
$

Thus, the following analogue of \cite[Theorem 5.2.3]{Aza17} holds.
\begin{thm} \label{T: d[nu]'s are equal}
Suppose Assumption~\ref{A: weak conjugate} holds for all eigenvalue functions~$z_\tau(v)$ of $N_v$ which split from $z_0\in\sigma_d(N_0)$. 
Let~$\phi_\tau(v)$ be corresponding eigenpaths.
Then the following numbers are equal:
\begin{enumerate}
  \item the order $d_\tau$ of the eigenpath $\phi_\tau(v)$ at~$0,$
  \item the period of the $\tau$th cycle of resonance points of the group of~$0,$
  \item \label{item three} the size of the $\tau$th Jordan block for the nilpotent operator $\bfA_{z_0}(N_0,W).$ 
\end{enumerate}
Moreover, the vectors 
\[
  \frac 1{j!} \phi_\tau^{(j)}(0), \ \tau=1,\ldots,m, \ j=0,1,\ldots, d_\tau-1,
\] 
form a Jordan basis for $\bfA_{z_0}(N_0,W).$ 
\end{thm}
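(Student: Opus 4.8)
The plan is to assemble the theorem from the lemmas and corollary already proved in this section, so that the main work is organisational rather than computational. First I would fix $\tau$ and let $\tilde d_\tau$ denote the order of the eigenpath $\phi_\tau(v)$ at $v=0$, as defined via Lemma~\ref{L: chi has order >= k}. The heart of the argument is the observation, already stated in the text preceding the theorem, that by Lemma~\ref{L: phi has order >=k, then ...}(i)--(ii) the vectors $\frac{1}{j!}\phi_\tau^{(j)}(0)$ for $j=0,\ldots,\tilde d_\tau-1$ are nonzero resonance vectors of orders $1,2,\ldots,\tilde d_\tau$ on which $\bfA_{z_0}(N_0,W)$ acts as $\bfA_{z_0}\bigl(\tfrac1{j!}\phi_\tau^{(j)}(0)\bigr)=\tfrac1{(j-1)!}\phi_\tau^{(j-1)}(0)$, hence they form a Jordan chain terminating at the eigenvector $\phi_\tau(0)$. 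Corollary~\ref{C: depths equal} then says $\phi_\tau(0)$ has depth exactly $\tilde d_\tau-1$, so this chain cannot be extended: it is a Jordan chain of maximal length, which forces $\tilde d_\tau$ to coincide with the size $d_\tau$ of the $\tau$th Jordan block of $\bfA_{z_0}(N_0,W)$. That establishes (i) $=$ (iii).

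For (i) $=$ (ii) I would invoke the equivalence of items (i) and (iii) in Lemma~\ref{L: chi has order >= k}: the order of $\phi_\tau(v)$ at $0$ is the least $k$ with $\partial_v^{k-1}z_\tau(0)\neq 0$ failing, i.e.\ the multiplicity of $v=0$ as a critical point of $z_\tau$, which is precisely the ramification index of the inverse multivalued function $v(z)$ along the branch $z_\tau(v)^{-1}$ — in other words the period $\hat d_\tau$ of the corresponding cycle of resonance points, as already noted in the paragraph before the theorem. So $\hat d_\tau=\tilde d_\tau=d_\tau$, giving all three equalities.

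It remains to check that the collection $\bigl\{\tfrac1{j!}\phi_\tau^{(j)}(0): \tau=1,\ldots,m,\ j=0,\ldots,d_\tau-1\bigr\}$ is actually a Jordan \emph{basis}, not merely a union of Jordan chains. Each chain lies in $\Upsilon_{z_0}(N_0,W)=\im P_{z_0}(N_0,W)$, whose dimension is $n=\sum_\tau d_\tau$ (the sum of the Jordan block sizes), so a dimension count reduces the claim to linear independence of the whole family. For this I would use the conjugate eigenpaths: by Lemma~\ref{L: def of phi*(nu)} the vectors $\phi_\mu^*(0)$ satisfy $\scal{\phi_\mu^*(0),\phi_\tau(0)}=\delta_{\mu\tau}$, and differentiating the orthogonality relations $\scal{\phi_\mu^*(v),\phi_\tau(v)}=\delta_{\mu\tau}$ together with the order information on $\phi_\mu^*(v)$ (Proposition~\ref{P: TFAE phi* has order >=k}) shows that $\bar\partial_v^{i}\phi_\mu^*(0)$ pairs to zero with $\phi_\tau^{(j)}(0)$ whenever $(\mu,i)\neq(\tau,j)$ in the relevant range and nontrivially when they agree; this triangular pairing certifies linear independence across different $\tau$ and within each chain. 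The step I expect to be the main obstacle is precisely this last one: carefully extracting from the differentiated relation $\scal{\phi_\mu^*(v),\phi_\tau(v)}=\delta_{\mu\tau}$ the correct block-triangular structure of the matrix $\bigl[\scal{\bar\partial_v^i\phi_\mu^*(0),\phi_\tau^{(j)}(0)}\bigr]$, using Lemma~\ref{L: def of phi*(nu)} for distinct eigenvalues and the vanishing derivatives of $z_\tau$ for equal ones, and concluding nonsingularity. Everything else is bookkeeping on top of the lemmas of this section.
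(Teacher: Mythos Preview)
Your approach for (i)$=$(ii) and (i)$=$(iii) matches the paper exactly: the paper's ``proof'' is precisely the two paragraphs preceding the theorem, which invoke Lemma~\ref{L: phi has order >=k, then ...}(ii), Corollary~\ref{C: depths equal}, and the equivalence (i)$\Leftrightarrow$(iii) of Lemma~\ref{L: chi has order >= k} in the same way you do.

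For the ``Moreover'' clause the paper gives no further details, so you are right to try to supply them. But your pairing argument is both overcomplicated and incorrect as stated. Differentiating $\scal{\phi_\mu^*(v),\phi_\tau(v)}=\delta_{\mu\tau}$ at $v=0$ yields only the anti-diagonal relations
\[
  \sum_{i+j=k}\binom{k}{i}\,\scal{\bar\partial_v^{\,i}\phi_\mu^*(0),\partial_v^{\,j}\phi_\tau(0)}=0,\qquad k\geq 1,
\]
not the individual vanishing $\scal{\bar\partial_v^{\,i}\phi_\mu^*(0),\partial_v^{\,j}\phi_\tau(0)}=0$ for $(\mu,i)\neq(\tau,j)$ that you claim; in general these entries are nonzero. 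A much simpler route works: the vectors $\phi_\tau(0)$, $\tau=1,\ldots,m$, are a basis of $\clV(z_0,N_0)$ (recap at the end of \S\ref{S: semisimple}), and if $\sum_{\tau,j}c_{\tau j}\tfrac{1}{j!}\phi_\tau^{(j)}(0)=0$ then applying $\bfA_{z_0}^J$ with $J$ the largest $j$ occurring gives $\sum_\tau c_{\tau J}\phi_\tau(0)=0$, forcing all $c_{\tau J}=0$; induction on $J$ finishes linear independence.

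One further point: the inference ``maximal Jordan chain $\Rightarrow$ $\tilde d_\tau = d_\tau$'' is not valid on its own. For a nilpotent with blocks of sizes $4$ and $2$, the eigenvectors $e_1+f_1$ and $f_1$ are independent but both have depth~$1$, so the maximal chains from them have lengths $2,2$, not $4,2$. What actually closes this gap (and turns linear independence into a basis) is the dimension count $\sum_\tau\tilde d_\tau=\sum_\tau\hat d_\tau=n$: the cycles partition the resonance points of the group of~$0$, whose total number equals the algebraic multiplicity~$n$ of the eigenvalue $v^{-1}$ of $R_{z_0}(N_v)W$ by continuity of the total multiplicity under analytic perturbation. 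The paper's discussion leaves this implicit, and you should make it explicit rather than rely on the pairing.
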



We combine Lemmas~\ref{L: chi has order >= k},~\ref{L: phi has order >=k, then ...}
and~\ref{L: last lemma} in the following theorem which includes Theorem~\ref{T: criteria for splitting} as a special case. 
\begin{thm} \label{T: TFAE to chi has order k}
Let Assumption~\ref{A: weak conjugate} be given for an eigenvalue function $z(v)$ of $N_v$ which splits from $z_0\in\sigma_d(N_0)$. 
Let $k\geq 1$ and let~$\phi(v)$ be an eigenvector function corresponding to~$z(v).$ 
Then the following assertions are equivalent:  
\begin{enumerate}
  \item the path~$\phi(v)$ has order~$k$ at~$0.$
  \item the vectors \ 
   $
     W\phi(0), \ W\phi'(0), \ \ldots, \ W\phi^{(k-2)}(0)
   $ 
   \ are orthogonal to some~$\phi^* \in \clV(\bar z_0,N_0^*),$ $\scal{\phi^*,\phi(0)}\neq 0,$
   while $W\phi^{(k-1)}(0)$ is not. 
  \item 
  $
    z'(0) = 0, \ \ldots, \ z^{(k-1)}(0) = 0,
  $
  and $z^{(k)}(0) \neq 0,$
  \item for all $j=1,\ldots,k-1,$ \ \ 
    $  
      (N_0-z_0)\phi^{(j)}(0) = -j W\phi^{(j-1)}(0),
    $  
    and this fails for $j=k.$
  \item 
     the vector~$\phi(0)$ has depth~$k-1.$ 
  \item for any $j=1,2,\ldots,k-1,$ \ \ 
   $
     \bfA_{z_0}(N_0,W) \phi^{(j)}(0) = j \phi^{(j-1)}(0),
   $
   and this fails for $j=k.$ 
\end{enumerate}
Moreover, if these conditions hold, then
\begin{enumerate} 
\item[\upshape(vii)] for $j=1,2,\ldots,k,$ \ the vector~$\phi^{(j-1)}(0)$ has order~$j.$
\end{enumerate}
\end{thm}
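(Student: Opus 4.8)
The plan is to read off all seven conditions from the three preceding lemmas, using throughout that ``order exactly~$k$'' means ``order at least~$k$ but not at least~$k+1$'' and ``depth exactly~$k-1$'' means ``depth at least~$k-1$ but not at least~$k$'', and that, by Theorem~\ref{T: assum}, we may take $\phi(v)$ to be analytic at $v=0$. First I would dispatch the chain (i)~$\iff$~(iii)~$\iff$~(iv) by invoking Lemma~\ref{L: chi has order >= k} with $k$ and then with $k+1$: applied with $k$ it supplies $z'(0)=\cdots=z^{(k-1)}(0)=0$ and the relations~\eqref{F: (Hs-z)chi(j)=-jVchi(j-1)} for $j\le k-1$, while negating the statement applied with $k+1$ and subtracting what the level-$k$ statement already gives isolates precisely the extra clauses ``$z^{(k)}(0)\neq 0$'' and ``the relation fails for $j=k$''. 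This simultaneously yields (i)~$\Rightarrow$~[the orthogonality part of (ii), that being the very definition of order~$\ge k$].

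For the full statement of (ii) I would feed (iii) into the differentiated eigenvalue identity~\eqref{F: Diff-ed eigenvalue equation} taken one order higher: under $z'(0)=\cdots=z^{(k-1)}(0)=0$ every term on the right but the last collapses, leaving $k\scal{\phi^*,W\phi^{(k-1)}(0)}=z^{(k)}(0)\scal{\phi^*,\phi(0)}$, where $\phi^*\in\clV(\bar z_0,N_0^*)$ is the non-degenerate vector supplied by Assumption~\ref{A: weak conjugate}, so that $\scal{\phi^*,\phi(0)}\neq 0$; since also $z^{(k)}(0)\neq 0$, the vector $W\phi^{(k-1)}(0)$ fails to be orthogonal to this particular $\phi^*$, which is the remaining clause of (ii). Conversely, (ii) contains the hypothesis of Lemma~\ref{L: chi has order >= k}(ii) applied with $k$ and hence forces order~$\ge k$, while it rules out order~$\ge k+1$ because the latter would, by definition, put $W\phi^{(k-1)}(0)$ in the orthogonal complement of $\clV(\bar z_0,N_0^*)$, contradicting the non-orthogonality just established.

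For (v), (vi), (vii) I would first combine Lemma~\ref{L: phi has order >=k, then ...}(iii) with Lemma~\ref{L: last lemma} to obtain the clean equivalence ``$\phi(v)$ has order~$\ge k$'' $\iff$ ``$\phi(0)$ has depth~$\ge k-1$''; negating this with $k+1$ then gives (i)~$\iff$~(v). The relations in (vi) for $j\le k-1$ are exactly Lemma~\ref{L: phi has order >=k, then ...}(ii); were that relation to hold also at $j=k$, iterating $\bfA_{z_0}(N_0,W)$ down the chain $\phi^{(k)}(0),\phi^{(k-1)}(0),\dots,\phi(0)$ would give $\phi(0)=\frac{1}{k!}\bfA_{z_0}(N_0,W)^{k}\phi^{(k)}(0)$, i.e.\ $\phi(0)$ would have depth~$\ge k$, contradicting (v). The converse (vi)~$\Rightarrow$~(v) runs the same computation backwards: the first $k-1$ relations force depth~$\ge k-1$, and depth~$\ge k$ would, via Lemma~\ref{L: last lemma} and Lemma~\ref{L: phi has order >=k, then ...}(ii) applied with $k+1$, make the $k$th relation hold as well. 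Finally (vii) is Lemma~\ref{L: phi has order >=k, then ...}(i) applied with $k$ (and trivial when $k=1$).

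I expect the only genuinely delicate point to be the bookkeeping in (ii): one must ensure that a \emph{single} vector $\phi^*$ simultaneously witnesses the orthogonality of $W\phi(0),\dots,W\phi^{(k-2)}(0)$ and the non-orthogonality of $W\phi^{(k-1)}(0)$, which does not follow by a purely formal negation of Lemma~\ref{L: chi has order >= k}(ii). This is why that step is routed through the explicit recursion~\eqref{F: Diff-ed eigenvalue equation} together with the non-degeneracy $\scal{\phi^*,\phi(0)}\neq 0$ furnished by Assumption~\ref{A: weak conjugate}; everything else is a mechanical ``statement with $k$ minus statement with $k+1$'' application of the three lemmas already in hand.
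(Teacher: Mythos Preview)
Your proposal is correct and follows precisely the approach the paper takes: the paper's proof consists of the single sentence ``We combine Lemmas~\ref{L: chi has order >= k},~\ref{L: phi has order >=k, then ...} and~\ref{L: last lemma}'', and you have carried out that combination in full detail, including correctly isolating the one non-mechanical point---that a \emph{single} $\phi^*$ must witness both the orthogonality and the non-orthogonality in~(ii)---and resolving it via the recursion~\eqref{F: Diff-ed eigenvalue equation} at level $k+1$ together with Assumption~\ref{A: weak conjugate}.
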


In~\cite[Lemma 3.1.5]{Aza17} the equality of the item (vi) in this theorem 
was obtained by a different method, which is specific to the case of real~$z_0$ 
(outside essential spectrum) and self-adjoint~$N_0,$ 
and it seems unlikely that this method can be adjusted to the current setting.

We conjecture that item~(vii) is also equivalent to the other items of this theorem. 
For an eigenvalue $z_0 = z(0)$ of geometric multiplicity~$1$ this 
is true, since in this case~(vii) obviously implies~(v).


\begin{cor} $\phi(v)$ cannot have order greater than~$\rank(W).$ 
\end{cor}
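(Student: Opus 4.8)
\emph{Plan.} Assume $\phi(v)$ has order $k$ at $v=0$. The plan is to produce $k$ linearly independent vectors inside $\im W$; since $\dim\im W=\rank(W)$, this immediately gives $k\le\rank(W)$. The vectors I would use are $W\phi(0),W\phi'(0),\ldots,W\phi^{(k-1)}(0)$.

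First I would record what Theorem~\ref{T: TFAE to chi has order k} (together with item~(vii)) supplies: $\phi(0),\phi'(0),\ldots,\phi^{(k-1)}(0)$ are resonance vectors of orders $1,2,\ldots,k$ respectively, hence linearly independent, being of pairwise distinct orders (equivalently, the rescaled vectors $\tfrac1{j!}\phi^{(j)}(0)$ form a Jordan chain for $\bfA_{z_0}(N_0,W)$). Moreover $\bfA_{z_0}(N_0,W)\phi^{(j)}(0)=j\,\phi^{(j-1)}(0)$ for $1\le j\le k-1$, while $\bfA_{z_0}(N_0,W)\phi(0)=0$ because $\bfA_{z_0}(N_0,W)$ lowers the order of a resonance vector by one.

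Next, to obtain the linear independence of the $W\phi^{(j)}(0)$, suppose $\sum_{j=0}^{k-1}c_j\,W\phi^{(j)}(0)=0$ and apply $K_{-1}(z_0,N_0,W)$. Since $\bfA_{z_0}(N_0,W)=K_{-1}(z_0,N_0,W)\,W$ by~\eqref{F: Ak=K(-k)W and Bk=WK(-k)}, this yields $\sum_{j=1}^{k-1}j\,c_j\,\phi^{(j-1)}(0)=0$ (the $j=0$ term dropping out), so $c_1=\cdots=c_{k-1}=0$ by the linear independence of $\phi(0),\ldots,\phi^{(k-2)}(0)$. We are left with $c_0\,W\phi(0)=0$; but $W\phi(0)\neq 0$, since otherwise $\phi(0)$ would be a nonzero eigenvector of $N_u$ at $z_0$ for every $u$, contradicting that $z_0\in\rho(N_u)$ for some $u$, which holds because $R_{z_0}(N_v)$ is a meromorphic function of $v$. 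Hence $c_0=0$ as well, the $k$ vectors $W\phi^{(j)}(0)$ are independent, and $k\le\rank(W)$.

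The argument is short and I expect no genuine obstacle; the only point needing a moment's care is the non-vanishing of $W\phi(0)$, which is not an extra hypothesis but a consequence of the standing setup (namely $\clV(z_0,N_0)\cap\ker W=\{0\}$). If one prefers to bypass the Laurent coefficient $K_{-1}$, the same conclusion follows by hand: Theorem~\ref{T: TFAE to chi has order k}(iv) gives $W\phi^{(i)}(0)=-\tfrac1{i+1}(N_0-z_0)\phi^{(i+1)}(0)\in\im(N_0-z_0)=\big(\clV(\bar z_0,N_0^*)\big)^{\perp}$ for $0\le i\le k-2$, whereas $W\phi^{(k-1)}(0)\notin\big(\clV(\bar z_0,N_0^*)\big)^{\perp}$ precisely because the order equals $k$; this forces the coefficient of $W\phi^{(k-1)}(0)$ to vanish, after which any putative nontrivial dependence among $W\phi^{(0)}(0),\ldots,W\phi^{(k-2)}(0)$, pulled through $(N_0-z_0)$, would place a nonzero combination of the resonance vectors $\phi'(0),\ldots,\phi^{(k-1)}(0)$ (of orders $2,\ldots,k$) inside $\clV(z_0,N_0)$, which is impossible.
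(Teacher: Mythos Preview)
Your proof is correct and follows the same strategy as the paper's: both argue that the $k$ vectors $W\phi(0),\ldots,W\phi^{(k-1)}(0)$ are linearly independent in $\im W$, whence $k\le\rank(W)$. The paper's proof is a one-line citation of Theorem~\ref{T: TFAE to chi has order k}(ii) for this independence, while you spell out the details via the action of $K_{-1}$ (and sketch a second route through item~(iv)); your alternative argument is in fact close to what the citation of~(ii) is meant to convey. A slightly shorter variant of your main argument is to apply $K_0$ instead of $K_{-1}$: since $K_0W=P_{z_0}$ acts as the identity on $\Upsilon_{z_0}(N_0,W)$ and the vectors $\phi^{(j)}(0)$ lie there by item~(vii), any dependence among the $W\phi^{(j)}(0)$ is sent directly to a dependence among the $\phi^{(j)}(0)$, avoiding the separate verification that $W\phi(0)\neq 0$.
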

\begin{proof} This follows from Theorem~\ref{T: TFAE to chi has order k}(ii), since for $\phi(v)$ of order~$k$ 
at~$0$ the vectors $W\phi(0),$ $\ldots,$ $W\phi^{(k-1)}(0)$ are linearly independent. 
\end{proof}

\begin{remark}
For an unbounded perturbation $W=F^*JF,$ the definition of the order of an eigenpath is interpreted as follows: $\phi(v)$ has order at least~$k$ at $v=0$ if
\begin{equation*} \label{F: unbdd order k path}
  JF\phi(0), \ JF \partial_v \phi(0), \ \ldots, \ JF\partial^{k-2}_v \phi(0)
\end{equation*}
are orthogonal to $F\clV(\bar z_0,N_0^*).$

Items (ii) and (iv) in the statement of Lemma~\ref{L: chi has order >= k} are replaced by the following:
\begin{enumerate}
  \item[(ii$'$)] the vectors
   $
     JF\phi(0), \ JF\partial_v \phi(0), \ \ldots, \ JF\partial^{k-2}_v \phi(0)
   $ 
   are orthogonal to a vector $F\phi^*\in F\clV(\bar z_0,N_0^*),$ 
   such that $\scal{\phi^*,\phi(0)} \neq 0.$ 
  \item[(iv$'$)] for all $j=1,\ldots,k-1,$ and any $f\in\dom[\clA],$
  \begin{equation*}
    (N_0-z_0)[f, \partial_v^j \phi(0)] = -j\scal{Ff, JF\partial_v^{j-1}\phi(0)}.
  \end{equation*}
\end{enumerate}
The changes to the proof are then straightforward.

The statement of Lemma~\ref{L: this is enough} is altered so that~\eqref{F: (H-z)phi(j)=-j phi(j-1)} reads
\begin{equation}\label{F: (N0-z0)[f,phi(j)] = -jW[f,phi(j-1)]}
 (N_0 - z_0)[f, \phi_j] = -j \scal{Ff,JF\phi_{j-1}}, \quad \forall f\in\dom [\clA],
\end{equation}
and within each item of the conclusion $F\phi_j$ appears instead of $\phi_j$.

In the proof, instead of~\eqref{F: [1+RV]phi(j)=-RV phi(j-1)}, we obtain
\begin{equation}\label{F: [1 - vTJ]Fphi(j) = -jTJFphi(j-1)}
 \big[1 - v T_{z_0}(N_v)J\big] F\phi_{j} = -j T_{z_0}(N_v)JF \phi_{j-1}
\end{equation}
using a similar technique as in Remark~\ref{R: 1st unbdd pf}: 
The equality~\eqref{F: (N0-z0)[f,phi(j)] = -jW[f,phi(j-1)]} can be rewritten as
\[
(N_v - z_0)[f,\phi_j] - v\scal{Ff,JF\phi_j} = - j \scal{Ff,JF\phi_{j-1}},
\]
which for $f=R_{\bar z_0}(N_0^*)\psi$ and any $\psi\in\clK$ implies the equality of vectors
\[
 \phi_j - v(FR_{\bar z_0}(N_0^*))^*JF\phi_j = - j (FR_{\bar z_0}(N_0^*))^*JF\phi_{j-1}.
\]
Equality~\eqref{F: [1 - vTJ]Fphi(j) = -jTJFphi(j-1)} follows after applying~$F.$
Given~\eqref{F: [1 - vTJ]Fphi(j) = -jTJFphi(j-1)}, the rest of the argument is virtually unchanged.

The necessary changes to the rest of the results in this document are minimal and should be clear given those previous and so this will be the last remark concerning the case of unbounded perturbations.
\end{remark}

\section{Structure of the resonance projection}\label{S: structure of Pz}
\subsection{Decomposition of \texorpdfstring{$P_z$}{Pz}}\label{S: Pz decomp} Here we show that the resonance projection $P_{z_0}(N_0,W)$ can be decomposed into a sum of projections corresponding to cycles of resonance points.

\begin{lemma} \label{L: phi*(mu) and phi(nu) are perp} 
Assume that $z_\tau(v)$ and $z_\mu(v)$ are two different holomorphic eigenvalue functions of $N_v$,
and $\phi_\tau(v)$ and $\phi^*_\mu(v)$ are eigenvector functions corresponding to $z_\tau(v)$ 
and $z^*_\mu(v)$ respectively.
Then 
$
  \scal{\phi^*_\mu(v),\phi_\tau(v)} = 0.
$
\end{lemma}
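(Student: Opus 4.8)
The plan is to play the two eigenvalue equations off against each other through the adjoint pairing. Fix $v$ in the (connected) domain $G$ on which $z_\tau$ and $z_\mu$ are holomorphic. By hypothesis $\phi_\tau(v)\in\dom N_v$ with $N_v\phi_\tau(v)=z_\tau(v)\phi_\tau(v)$, while $\phi^*_\mu(v)\in\dom N_v^*$ with $N_v^*\phi^*_\mu(v)=z^*_\mu(v)\phi^*_\mu(v)=\overline{z_\mu(v)}\,\phi^*_\mu(v)$. Evaluating $\scal{\phi^*_\mu(v),N_v\phi_\tau(v)}$ in two ways — letting $N_v$ act on the right, and transferring it across as $N_v^*$ on the left via $\scal{u,N_vw}=\scal{N_v^*u,w}$ — I would obtain
\[
 z_\tau(v)\,\scal{\phi^*_\mu(v),\phi_\tau(v)}=\scal{N_v^*\phi^*_\mu(v),\phi_\tau(v)}=z_\mu(v)\,\scal{\phi^*_\mu(v),\phi_\tau(v)},
\]
so that $\bigl(z_\tau(v)-z_\mu(v)\bigr)\scal{\phi^*_\mu(v),\phi_\tau(v)}=0$ for every $v\in G$.

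Next I would bring in analyticity. Since $z_\tau$ and $z_\mu$ are distinct holomorphic functions on the connected set $G$, their difference is not identically zero, so $Z:=\{v\in G:z_\tau(v)=z_\mu(v)\}$ is discrete. For $v\in G\setminus Z$ the displayed identity forces $\scal{\phi^*_\mu(v),\phi_\tau(v)}=0$. As the eigenvector functions $\phi_\tau$ and $\phi^*_\mu$ are continuous (as is implicit here, and as they may in any case be chosen), the scalar function $v\mapsto\scal{\phi^*_\mu(v),\phi_\tau(v)}$ is continuous on $G$; vanishing on the dense subset $G\setminus Z$, it vanishes identically, which is the assertion.

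The argument is elementary and I do not anticipate a genuine obstacle. Two points deserve a word. First, the transfer of $N_v$ onto its adjoint inside the inner product is immediate here because $W$ is bounded, so that $\dom N_v=\dom N_v^*=\dom N_0$ and all the relevant eigenvectors lie in this common domain; for unbounded perturbations one tests the corresponding form identity against vectors of the common form domain $\dom[\clA]$, exactly as in the earlier remarks, with no change of substance. Second, the continuity extension across the discrete coincidence set $Z$ is the only place regularity of the eigenpaths is used — the purely algebraic identity already disposes of all $v$ with $z_\tau(v)\neq z_\mu(v)$. One may note that this lemma is precisely the global, holomorphic counterpart of the pointwise orthogonality~\eqref{E: (phi*,phi) = 0} used in Section~\ref{S: semisimple}.
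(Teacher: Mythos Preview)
Your proof is correct and follows essentially the same approach as the paper: both derive the identity $(z_\tau(v)-z_\mu(v))\scal{\phi^*_\mu(v),\phi_\tau(v)}=0$ by moving $N_v$ across the inner product, then use that $z_\tau\not\equiv z_\mu$ to conclude the scalar product vanishes identically. The only cosmetic difference is that the paper invokes holomorphy of $\scal{\phi^*_\mu(v),\phi_\tau(v)}$ (it is holomorphic because $\phi^*_\mu$ is anti-holomorphic and the inner product is conjugate-linear in the first slot) and appeals to the identity theorem, whereas you use continuity and density of the complement of the coincidence set; these are interchangeable here.
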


\begin{proof} This proof uses a standard argument used to prove orthogonality of eigenvectors
of self-adjoint operators. 
Using the eigenvalue equations for $\phi_\tau(v)$ and $\phi_\mu^*(v)$, 
we have 
\begin{equation*}
  \begin{split}
    z_\mu(v) \scal{\phi^*_\mu(v),\phi_\tau(v)} & = \scal{z^*_\mu(v)\phi^*_\mu(v),\phi_\tau(v)}
         = \scal{N_v^* \phi^*_\mu(v),\phi_\tau(v)}
    \\ & = \scal{\phi^*_\mu(v),N_{v} \phi_\tau(v)}
         = \scal{\phi^*_\mu(v),z_\tau(v)\phi_\tau(v)}
    \\ & = z_\tau(v)\scal{\phi^*_\mu(v),\phi_\tau(v)}.
  \end{split}
\end{equation*}
Since the functions $z_\tau(v), z_\mu(v)$ and $\scal{\phi^*_\mu(v),\phi_\tau(v)}$ are holomorphic
and $z_\tau(v) \neq z_\mu(v),$ the scalar product must vanish.
\end{proof}


\begin{thm}\label{T: (phi*^(j),W phi^(k)) = 0}
Let~Assumption~\ref{A: weak conjugate} hold for two distinct eigenvalue functions $z_\mu(v)$ and $z_\tau(v)$, with corresponding eigenvector functions $\phi_\mu(v)$ and $\phi_\tau(v)$.
If the functions~$\phi_\mu(v)$ and $\phi_\tau(v)$ have 
orders $d_\mu$ and $d_\tau$ respectively at~$0$,
then for any $j=0,1,2,\ldots,d_\mu-1$ and $k=0,1,2,\ldots,d_\tau-1$,
\begin{equation}\label{F: (phi*^(j),W phi^(k)) = 0}
  \scal{\bar \partial_{v}^j \phi^*_\mu({0}),W\partial_v^k \phi_\tau({0})} = 0.
\end{equation}
\end{thm}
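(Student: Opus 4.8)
The plan is to establish \eqref{F: (phi*^(j),W phi^(k)) = 0} by proving a single two‑variable identity and then reading off Taylor coefficients. By Theorem~\ref{T: assum} the functions $z_\mu(v)$ and $z_\tau(v)$ are analytic near $0$; let $\phi^*_\mu(v)$ be an anti‑holomorphic eigenpath of $N_v^*=N_0^*+\bar vW$ for the eigenvalue $z^*_\mu(v)=\overline{z_\mu(v)}$, and introduce a second variable $w$ for $\phi_\tau$, so that $N_w\phi_\tau(w)=z_\tau(w)\phi_\tau(w)$. Writing $wW\phi_\tau(w)=(N_w-N_0)\phi_\tau(w)=(z_\tau(w)-N_0)\phi_\tau(w)$ and $N_0^*\phi^*_\mu(v)=(N_v^*-\bar vW)\phi^*_\mu(v)=z^*_\mu(v)\phi^*_\mu(v)-\bar vW\phi^*_\mu(v)$, then pairing the first relation against $\phi^*_\mu(v)$, moving $N_0$ across the inner product, substituting the second relation, and using $W^*=W$, $\overline{z^*_\mu(v)}=z_\mu(v)$ and $\overline{\bar v}=v$, one obtains after rearranging the identity
\[
 (w-v)\scal{\phi^*_\mu(v),W\phi_\tau(w)}=\bigl(z_\tau(w)-z_\mu(v)\bigr)\scal{\phi^*_\mu(v),\phi_\tau(w)}.
\]
This is the device behind Lemma~\ref{L: phi*(mu) and phi(nu) are perp}, only refined to two variables; both sides are jointly holomorphic in $(v,w)$ near $(0,0)$ since $\phi^*_\mu$ is anti‑holomorphic and $\phi_\tau$ is holomorphic.

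Next I would divide by $w-v$. By Lemma~\ref{L: phi*(mu) and phi(nu) are perp}, $\scal{\phi^*_\mu(v),\phi_\tau(v)}\equiv 0$, so the holomorphic function $\scal{\phi^*_\mu(v),\phi_\tau(w)}$ vanishes on the diagonal $w=v$ and hence factors as $(w-v)\,\tilde h(v,w)$ with $\tilde h$ holomorphic; cancelling $w-v$ on $\{w\neq v\}$ and passing to the limit gives
\[
 \scal{\phi^*_\mu(v),W\phi_\tau(w)}=\bigl(z_\tau(w)-z_\mu(v)\bigr)\tilde h(v,w).
\]
Since $\phi_\tau(v)$ has order $d_\tau$ at $0$, Lemma~\ref{L: chi has order >= k} gives $z_\tau^{(i)}(0)=0$ for $1\le i\le d_\tau-1$, while $z_\tau(0)=z_0$, so $z_\tau(w)-z_0=w^{d_\tau}a(w)$ with $a$ holomorphic; likewise $z_\mu(v)-z_0=v^{d_\mu}b(v)$. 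Hence $\scal{\phi^*_\mu(v),W\phi_\tau(w)}=w^{d_\tau}a(w)\tilde h(v,w)-v^{d_\mu}b(v)\tilde h(v,w)$, so in the Taylor expansion about $(0,0)$ every monomial $v^jw^k$ has $k\ge d_\tau$ or $j\ge d_\mu$. The coefficient of $v^jw^k$ equals $\tfrac{1}{j!\,k!}\scal{\bar\partial_v^j\phi^*_\mu(0),W\partial_v^k\phi_\tau(0)}$, so this scalar product vanishes whenever $0\le j\le d_\mu-1$ and $0\le k\le d_\tau-1$, which is the claim.

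I do not anticipate a serious obstacle: once the two‑variable identity is in hand the rest is bookkeeping with power series, and the identity itself is a routine variant of the computation behind Lemma~\ref{L: phi*(mu) and phi(nu) are perp}. The one step that needs a word of justification is the cancellation of $w-v$, which relies precisely on the diagonal vanishing supplied by that lemma. One could instead attempt a single‑variable argument: Lemma~\ref{L: phi has order >=k, then ...}(ii) together with the relation $\scal{\bfA_{\bar z_0}(N_0^*,W)f,Wg}=\scal{f,W\bfA_{z_0}(N_0,W)g}$ yields a recursion showing that the scalar products $\scal{\bar\partial_v^j\phi^*_\mu(0),W\partial_v^k\phi_\tau(0)}$ with $j+k$ fixed are proportional with nonzero ratios, and the row $j=0$ and column $k=0$ can be killed using the vanishing derivatives of $z_\tau-z_0$ and $z_\mu-z_0$; but this leaves the ``middle'' lines $j+k>\max(d_\mu,d_\tau)-1$ untreated, which is exactly the gap the two‑variable identity closes.
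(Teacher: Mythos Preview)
Your argument is correct. The two-variable identity
\[
 (w-v)\scal{\phi^*_\mu(v),W\phi_\tau(w)}=(z_\tau(w)-z_\mu(v))\scal{\phi^*_\mu(v),\phi_\tau(w)}
\]
is easily verified as you indicate, the diagonal vanishing from Lemma~\ref{L: phi*(mu) and phi(nu) are perp} indeed lets you factor out $(w-v)$ from the right-hand scalar product, and the vanishing orders of $z_\tau(w)-z_0$ and $z_\mu(v)-z_0$ then force the stated Taylor coefficients to vanish.

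This is a genuinely different route from the paper's. The paper stays with a single variable: it differentiates $\scal{\phi^*_\mu(v),N_v\phi_\tau(v)}=0$ exactly $j+k+1$ times at $v=0$, then splits the resulting Leibniz sum at the index~$j$ and applies the Jordan-chain identities $(N_0-z_0)\partial_v^l\phi_\tau(0)=-lW\partial_v^{l-1}\phi_\tau(0)$ and $(N_0^*-\bar z_0)\bar\partial_v^l\phi^*_\mu(0)=-lW\bar\partial_v^{l-1}\phi^*_\mu(0)$ on the two halves; after using Lemma~\ref{L: phi*(mu) and phi(nu) are perp} once more, every term cancels against the $W$-sum except the one with index pair~$(j,k)$. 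In other words, the paper's single-variable argument does cover the ``middle'' range you were worried about, but at the cost of a careful combinatorial cancellation with binomial coefficients. Your two-variable factorisation trades that bookkeeping for a single structural observation, yielding all index pairs at once; it is shorter and more conceptual, and also makes transparent why the ranges $j\le d_\mu-1$, $k\le d_\tau-1$ are exactly the right ones.
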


\begin{proof}(cf.~\cite[Theorem 3.1.12]{Aza17})
From Lemma~\ref{L: phi*(mu) and phi(nu) are perp}, we have
\[
 \scal{\phi^*_\mu(v),N_v\phi_\tau(v)} = z_\tau(v)\scal{\phi^*_\mu(v),\phi_\tau(v)} = 0,
\]
which when differentiated $m + 1 := j + k + 1$ times at $v=0$ using the Leibniz rule gives
\begin{equation}\label{F: Leibniz rule}
 0 = \sum_{l=0}^{m+1}\binom{m+1}{l}\scal{\bar\partial_v^{l}\phi^*_\mu(0),N_0\partial_v^{m+1-l}\phi_\tau(0)} + \sum_{l=0}^{m}(m+1)\binom{m}{l}\scal{\bar\partial_v^{l}\phi^*_\mu(0),W\partial_v^{m-l}\phi_\tau(0)}.
\end{equation}
We transform the first summand as 
\begin{multline*}
(E):= \sum_{l=0}^{m+1}\binom{m+1}{l}\scal{\bar\partial_v^{l}\phi^*_\mu(0),N_0\partial_v^{m+1-l}\phi_\tau(0)} 
\\ = \sum_{l=0}^{j}\binom{m+1}{l}\scal{N_0^*\bar\partial_v^{l}\phi^*_\mu(0),\partial_v^{m+1-l}\phi_\tau(0)} + \sum_{l=j+1}^{m+1}\binom{m+1}{l}\scal{\bar\partial_v^{l}\phi^*_\mu(0),N_0\partial_v^{m+1-l}\phi_\tau(0)}
\end{multline*}
and apply the equalities 
\begin{gather*}
 N_0^*\bar\partial_v^{l}\phi^*_\mu(0) = \bar z_0\bar\partial_v^{l}\phi^*_\mu(0) - lW\bar\partial_v^{l-1}\phi^*_\mu(0), \\  N_0\partial_v^{m+1-l}\phi_\tau(0) = z_0\partial_v^{m+1-l}\phi_\tau(0) - (m+1-l)W\partial_v^{m-l}\phi_\tau(0),
\end{gather*}
from Lemma~\ref{L: chi has order >= k} and Proposition~\ref{P: TFAE phi* has order >=k} to obtain
\begin{align*}
(E) &= z_0\sum_{l=0}^{m+1}\binom{m+1}{l}\scal{\bar\partial_v^{l}\phi^*_\mu(0),\partial_v^{m+1-l}\phi_\tau(0)} - \sum_{l=1}^{j}l\binom{m+1}{l}\scal{W\bar\partial_v^{l-1}\phi^*_\mu(0),\partial_v^{m+1-l}\phi_\tau(0)} 
\\&\qquad -  \sum_{l=j+1}^{m}(m+1-l)\binom{m+1}{l}\scal{\bar\partial_v^{l}\phi^*_\mu(0),W\partial_v^{m-l}\phi_\tau(0)}
\\& = z_0\partial_v^{m+1}\scal{\phi^*_\mu(v),\phi_\tau(v)}\Big|_{v=0} - \sum_{l=0}^{j-1}\frac{(m+1)!}{l!(m-l)!}\scal{W\bar\partial_v^{l}\phi^*_\mu(0),\partial_v^{m-l}\phi_\tau(0)} 
\\&\qquad -  \sum_{l=j+1}^{m}\frac{(m+1)!}{l!(m-l)!}\scal{\bar\partial_v^{l}\phi^*_\mu(0),W\partial_v^{m-l}\phi_\tau(0)}.
\end{align*}
The first summand here is zero by Lemma~\ref{L: phi*(mu) and phi(nu) are perp} and hence returning to~\eqref{F: Leibniz rule}, we find~\eqref{F: (phi*^(j),W phi^(k)) = 0}.
\end{proof}

In the next theorem we consider the total projection corresponding to a cycle of resonance points.

\begin{thm} \label{T: properties of P[nu]}
Let Assumption~\ref{A: weak conjugate} be given for all eigenvalue functions~$z_\tau(v)$ of~$N_v$ which split from $z_0\in\sigma_d(N_0)$ and let~$\phi_\tau(v)$ be corresponding eigenvector functions for $\tau=1,\ldots,m.$ 
For each $\tau = 1,\ldots, m,$ let $v_\tau^{(j)}(z),$ $j=0,\ldots,d_\tau-1,$ be the corresponding cycle of resonance points which constitute the multivalued inverse of~$z_\tau(v).$ 
Then the following assertions hold:\\
\begin{enumerate}
  \item \label{P: P[nu](z) is holomorphic} The function 
\[
 P_{z}^{[\tau]} := \sum_{j=0}^{d_\tau-1} P_{z_\tau(v)}\left(N_{v_\tau^{(j)}(z)},W\right)
\]
of~$z$ is holomorphic in a neighbourhood of~$z_0.$
  \item \label{C: dim Upsilon[nu]=d(nu)}
The range  \label{Page: Upsilon lambda nu}
\[
  \Upsilon_{z_0}^{[\tau]} := \im P_{z_0}^{[\tau]}
\]
of the idempotent $P_{z_0}^{[\tau]}$ (which exists by item \ref{P: P[nu](z) is holomorphic}) 
has dimension $d_\tau$ and the vectors 
\begin{equation*} \label{F: phi(j) is a basis}
  \phi_\tau(0), \ \phi'_\tau(0), \ \ldots, \ \phi^{(d_\tau-1)}_\tau(0).
\end{equation*}
form its basis. 
  \item \label{C: A(lamb)(nu) is reduced}
  The operators $P_{z_0}^{[\tau]}(N_0,W)$ and~$\bfA_{z_0}(N_0,W)$ commute
  and thus the vector space $\Upsilon_{z_0}^{[\tau]}$ is invariant with respect to $\bfA_{z_0}(N_0,W).$ 
  \item \label{L: dim(Ups[nu]cap clV)=1}
   The dimension of the vector space $\Upsilon_{z_0}^{[\tau]} \cap \clV(z_0,N_0)$ is equal to $1.$
  \item \label{L: bfA[nu] is cyclic} 
  The Jordan cell decomposition of 
the restriction of the operator $\bfA_{z_0}^{[\tau]}$ to $\Upsilon_{z_0}^{[\tau]}$
consists of only one Jordan cell. 
\end{enumerate}
\end{thm}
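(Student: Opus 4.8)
We prove the five assertions in the order stated; the essential step is (i), after which (ii)--(v) follow from (i) together with Theorem~\ref{T: d[nu]'s are equal}. Throughout I assume for simplicity that the eigenvalue branches $z_\sigma(v)$ which split from $z_0$ are pairwise distinct for $v\neq 0$ near $0$ (so that each relevant eigenvalue is simple); the general case of repeated branches $z_\sigma\equiv z_\mu$ requires replacing the rank-one projections $E_\tau(w)$ below by the genuine eigenprojection of $N_w$ at $z_\tau(w)$ and some additional bookkeeping, which I suppress.

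\emph{Proof of (i).} The plan is to represent $P_z^{[\tau]}$ by a contour integral whose integrand, unlike $R_z(N_w)W$, has poles only at the resonance points of the $\tau$th cycle. Let $E_\tau(w):=\scal{\phi^*_\tau(w),\,\cdot\,}\phi_\tau(w)$ be the eigenprojection of $N_w$ at the eigenvalue $z_\tau(w)$; by Assumption~\ref{A: weak conjugate}, Theorem~\ref{T: assum} and Lemma~\ref{L: def of phi*(nu)}, $\phi_\tau$ is holomorphic and $\phi^*_\tau$ anti-holomorphic in a full neighbourhood of $0$, so $E_\tau(w)$ is holomorphic there. By the definition of $K_0$ in Section~\ref{S: Rz(N_v)}, $P_z(N_v,W)$ is the residue in $w$ at $w=v$ of $R_z(N_w)W$. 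Fix $z$ in a small punctured disc around $z_0$; then each resonance point $v=v_\tau^{(j)}(z)$ is nonzero, $z=z_\tau(v)$ is a simple eigenvalue of $N_v$, and near $w=v$ one has
\[
 R_z(N_w) \;=\; \frac{E_\tau(w)}{z_\tau(w)-z} \;+\; [\text{holomorphic in }w],
\]
so $P_z(N_{v_\tau^{(j)}(z)},W)$ equals the residue at $w=v_\tau^{(j)}(z)$ of $w\mapsto (z_\tau(w)-z)^{-1}E_\tau(w)W$, namely $E_\tau(v_\tau^{(j)}(z))W/z_\tau'(v_\tau^{(j)}(z))$. Choose $\eps>0$ so small that $z_\tau(w)-z_0$ has no zero in $0<|w|\leq\eps$; then for $z$ close to $z_0$ the function $w\mapsto (z_\tau(w)-z)^{-1}E_\tau(w)W$ is holomorphic on $|w|\leq\eps$ except for simple poles exactly at $v_\tau^{(0)}(z),\dots,v_\tau^{(d_\tau-1)}(z)$, the $d_\tau$ zeros of $z_\tau(w)-z$ inside $|w|<\eps$ (argument principle). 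The residue theorem therefore gives
\[
 P_z^{[\tau]} \;=\; \frac{1}{2\pi i}\oint_{|w|=\eps}\frac{E_\tau(w)W}{z_\tau(w)-z}\,dw ,
\]
and the right-hand side is holomorphic in $z$ throughout a neighbourhood of $z_0$, since for $|w|=\eps$ and $z$ near $z_0$ one has $z_\tau(w)\neq z$ and the integrand is jointly holomorphic. This is (i); $P_{z_0}^{[\tau]}$ is the value of this integral at $z=z_0$, and it is idempotent because each $P_z^{[\tau]}$ ($z\neq z_0$) is a sum of the mutually orthogonal idempotents $P_z(N_{v_\tau^{(j)}(z)},W)$ (orthogonal since these coincide with the Riesz projections of $R_z(N_0)W$ at the distinct eigenvalues $-v_\tau^{(j)}(z)^{-1}$), and idempotency passes to the limit.

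\emph{Proof of (ii)--(v).} For (ii): since $z$ is a simple eigenvalue of $N_{v_\tau^{(j)}(z)}$, a trace computation using $\scal{\phi^*_\tau(v),W\phi_\tau(v)}=z_\tau'(v)$ (as in the proof of Lemma~\ref{L: chi has order >= k}) gives $\Tr P_z(N_{v_\tau^{(j)}(z)},W)=1$, hence $\Tr P_z^{[\tau]}=d_\tau$ and, by continuity, $\rank P_{z_0}^{[\tau]}=d_\tau$. By Lemma~\ref{L: phi has order >=k, then ...}(ii) and Theorem~\ref{T: d[nu]'s are equal}, $\phi_\tau(0),\phi'_\tau(0),\dots,\phi^{(d_\tau-1)}_\tau(0)$ is a Jordan chain for $\bfA_{z_0}(N_0,W)$, in particular a linearly independent set of $d_\tau$ vectors, so it suffices to check $\im P_{z_0}^{[\tau]}\subseteq\operatorname{span}\{\phi^{(l)}_\tau(0)\}_{l=0}^{d_\tau-1}$. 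Writing $z_\tau(w)-z_0=w^{d_\tau}g(w)$ with $g$ holomorphic, $g(0)\neq 0$, and $\phi_\tau(w)=\sum_{l\geq 0}\tfrac{1}{l!}\phi^{(l)}_\tau(0)\,w^l$, the formula of (i) at $z=z_0$ shows $P_{z_0}^{[\tau]}x$ is the coefficient of $w^{d_\tau-1}$ in the power series $\scal{\phi^*_\tau(w),Wx}\,g(w)^{-1}\phi_\tau(w)$, whose coefficient of $w^q$ lies in $\operatorname{span}\{\phi^{(l)}_\tau(0):0\leq l\leq q\}$; the inclusion follows, and comparing dimensions gives (ii). For (iii): the argument of (i) also yields $P_z^{[\tau]}P_z^{[\mu]}=0$ for $\tau\neq\mu$ and $\sum_\tau P_z^{[\tau]}=\tfrac{1}{2\pi i}\oint_{|w|=\eps}R_z(N_w)W\,dw$, which at $z=z_0$ is $P_{z_0}(N_0,W)$; hence $\Upsilon_{z_0}(N_0,W)=\bigoplus_\tau\Upsilon_{z_0}^{[\tau]}$ with the $P_{z_0}^{[\tau]}$ the attendant projections, each summand $\Upsilon_{z_0}^{[\tau]}$ is $\bfA_{z_0}(N_0,W)$-invariant by Lemma~\ref{L: phi has order >=k, then ...}(ii), and the projection onto one block of a decomposition by invariant subspaces commutes with the operator. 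For (iv): any $x=\sum_{l=0}^{d_\tau-1}c_l\phi^{(l)}_\tau(0)$ in $\clV(z_0,N_0)=\ker(N_0-z_0)$ satisfies, by $(N_0-z_0)\phi^{(l)}_\tau(0)=-lW\phi^{(l-1)}_\tau(0)$ (Lemma~\ref{L: chi has order >= k}(iv)), the relation $\sum_{l=1}^{d_\tau-1}lc_l\,W\phi^{(l-1)}_\tau(0)=0$; since $W\phi_\tau(0),\dots,W\phi^{(d_\tau-1)}_\tau(0)$ are linearly independent (the corollary following Theorem~\ref{T: TFAE to chi has order k} and its proof), all $c_l$ with $l\geq1$ vanish, so $\Upsilon_{z_0}^{[\tau]}\cap\clV(z_0,N_0)=\mbC\,\phi_\tau(0)$ has dimension $1$. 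Finally (v) follows from (iv): on $\Upsilon_{z_0}(N_0,W)$ the kernel of $\bfA_{z_0}(N_0,W)$ equals $\Upsilon^1_{z_0}(N_0,W)=\clV(z_0,N_0)$ (because $\bfA_{z_0}$ lowers the order of a resonance vector by exactly one, cf. Section~\ref{S: Rz(N_v)}, and \eqref{F: e.v. = order 1}), so $\ker\bfA^{[\tau]}_{z_0}=\Upsilon_{z_0}^{[\tau]}\cap\clV(z_0,N_0)$ is one-dimensional and a nilpotent operator with one-dimensional kernel has exactly one Jordan cell.

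\emph{The main obstacle.} The crux is (i): one must recognise that summing the resonance projections over a \emph{full} cycle allows the function $R_z(N_w)W$, which has poles in $w$ at every resonance point near $0$, to be replaced by the cycle-specific function $(z_\tau(w)-z)^{-1}E_\tau(w)W$, whose only poles near $w=0$ are those of the $\tau$th cycle because $z_\tau(w)-z$ vanishes only there. The holomorphy of $E_\tau(w)$ at $w=0$ — which guarantees there is no extra residue contribution from $w=0$ and that the resulting integral is holomorphic in $z$ — is precisely what Assumption~\ref{A: weak conjugate} supplies, and it is here (and in the suppressed case of repeated branches, where $E_\tau$ must be interpreted as the full eigenprojection) that the hypotheses enter essentially.
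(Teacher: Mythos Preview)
Your proof is correct and takes a genuinely different route from the paper's for the key step (i). The paper argues indirectly: it first observes that each $P_z^{[\tau]}$ is single-valued with at worst a pole at $z_0$ (via Kato's Theorem~II-1.8), that the total sum $\sum_\tau P_z^{[\tau]}$ converges to $P_{z_0}(N_0,W)$ and is therefore bounded, and that the pieces cannot cancel because $P_z^{[\tau]}P_z^{[\mu]}=\delta_{\tau\mu}P_z^{[\tau]}$; hence each piece is individually bounded and thus holomorphic. Your contour-integral representation
\[
P_z^{[\tau]}=\frac{1}{2\pi i}\oint_{|w|=\eps}\frac{E_\tau(w)W}{z_\tau(w)-z}\,dw
\]
is more constructive: it isolates the $\tau$th cycle by replacing $R_z(N_w)$ with its $\tau$-specific singular part and makes the holomorphy in $z$ manifest, with the holomorphy of $E_\tau(w)$ at $w=0$ (hence no spurious residue there) being exactly where Assumption~\ref{A: weak conjugate} enters. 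This formula then pays off in (ii), where you read off the range directly from the Taylor expansion of the integrand; the paper instead uses divided differences $\phi_\tau^{[j]}(v_\tau^{(0)}(z),\ldots,v_\tau^{(j)}(z))\to\tfrac{1}{j!}\phi_\tau^{(j)}(0)$ to identify the limiting range. For (iii) the paper compares Laurent coefficients in $P_{z_0}^{[\tau]}R_{z_0}(N_v)W=R_{z_0}(N_v)WP_{z_0}^{[\tau]}$, whereas you obtain invariance directly from the Jordan-chain description of $\Upsilon_{z_0}^{[\tau]}$; for (iv) the paper appeals to the cycle--eigenvector correspondence, while you give an explicit computation in the basis. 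Your disclaimer about pairwise distinct branches is honest and the indicated fix (replacing the rank-one $E_\tau(w)$ by the full eigenprojection) is the right one. One small overstatement: in (iv) you only need the linear independence of $W\phi_\tau(0),\ldots,W\phi_\tau^{(d_\tau-2)}(0)$, not up to index $d_\tau-1$; the weaker claim is what the cited corollary's proof actually supplies and is sufficient.
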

\begin{proof}(cf.~\cite[Theorem 5.3.1]{Aza17})
Since all of the resonance points $v_\tau^{(j)}(z),$ $j=0,\ldots,d_\tau-1,$ converge to~$0$ as $z\to z_0,$
it follows that the $j$th divided difference $\phi_\tau^{[j]}$ evaluated at the points $v_\tau^{(0)}(z),\ldots,v_\tau^{(j)}(z)$ converges to $\frac{1}{j!}\phi_\tau^{(j)}(0).$
Then because 
\[
 P_z^{[\tau]}\phi_\tau^{[j]}\big(v_\tau^{(0)}(z),\ldots,v_\tau^{(j)}(z)\big) = \phi_\tau^{[j]}\big(v_\tau^{(0)}(z),\ldots,v_\tau^{(j)}(z)\big),
\]
by taking the limit as $z\to z_0$ it can be concluded that the $d_\tau$-dimensional range of $P_{z}^{[\tau]}$ converges to the linear span of the independent vectors $\phi^{(j)}(0),$ $j=0,\ldots,d_\tau-1.$ 

Each $P_z^{[\tau]},$ $\tau=1,\ldots,m,$ is by construction single-valued and its Laurent expansion can only have finitely many terms with negative powers of $(z-z_0)$ (see e.g.~\cite[Theorem~II-1.8]{Kat80}). 
Since the total projection $\sum_{\tau=1}^m P^{[\tau]}_z$ converges to $P_{z_0}(N_0,W)$ and is therefore bounded as $z\to z_0,$ any terms with negative powers of $P_z^{[\tau]}$, $\tau=1,\ldots,m,$ must cancel one another as $z\to z_0.$ 
However, they cannot interact in the sense that $P_z^{[\tau]}P_z^{[\mu]} = \delta_{\tau\mu}P_z^{[\tau]}.$
Hence it must be that each $P_z^{[\tau]}$ is also bounded in a neighbourhood of~$z_0.$
This proves (i).

We have 
\[
 P_{z_0}^{[\tau]}P_{z_0}^{[\mu]} = \delta_{\tau\mu}P_{z_0}^{[\tau]}
\]
and
\[
 P_{z_0}(N_0,W) = \sum_{\tau=1}^m P_{z_0}^{[\tau]}.
\]
Using these equalities and the Laurent expansion of $R_{z_0}(N_v),$ (iii) is proved by comparing coefficients on both sides of the equality $P_{z_0}^{[\tau]}R_{z_0}(N_v)W = R_{z_0}(N_v)WP_{z_0}^{[\tau]}.$

It follows from (iii) that the dimension of $\Upsilon_{z_0}^{[\tau]}\cap\clV_{z_0}$ is at least 1, but it cannot be more since there is a correspondence of cycles and eigenvectors, which proves (iv).

Finally, (v) follows directly from (iii) and (iv).
\end{proof}

\subsection{Schmidt representation for \texorpdfstring{$P_z$}{Pz}}\label{S: Pz rep}

\begin{thm} \label{T: P = sum sum} 
Let Assumption~\ref{A: weak conjugate} hold for all eigenvalue functions~$z_\tau(v)$ of~$N_v$ which split from the eigenvalue~$z_0$ of~$N_0$ of geometric multiplicity~$m.$
Let $\phi_\tau(v)$, $\tau=1,\ldots,m$, be corresponding eigenpaths.
Then the idempotent operator~$P_{z_0}(N_0,W)$ can be written in the form 
\begin{equation*} 
  P_{z_0}(N_0,W) = \sum_{\tau=1}^m \sum_{k,j=0}^{d_\tau-1} 
      \frac 1{k!j!} \alpha_{\tau}^{kj} \scal{W \bar \partial_{v}^{k} \phi^*_\tau(0),\cdot} \partial_v^j\phi_\tau(0).
\end{equation*}
where $\alpha_\tau$ is a skew-upper-triangular Hankel matrix of size $d_\tau \times d_\tau.$
\end{thm}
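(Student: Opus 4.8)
The plan is to reduce the statement to the cycle projections $P^{[\tau]}_{z_0}$ of Theorem~\ref{T: properties of P[nu]} and to recognise each of them as a finite-rank operator in Schmidt form, then compute its coefficient matrix. By Theorem~\ref{T: properties of P[nu]} one has $P_{z_0}(N_0,W)=\sum_{\tau=1}^m P^{[\tau]}_{z_0}$ with $P^{[\tau]}_{z_0}P^{[\mu]}_{z_0}=\delta_{\tau\mu}P^{[\tau]}_{z_0}$, where the idempotent $P^{[\tau]}_{z_0}$ has range $\Upsilon^{[\tau]}_{z_0}$ spanned by $\{\tfrac1{j!}\partial_v^j\phi_\tau(0)\}_{j=0}^{d_\tau-1}$; moreover $\clH=\Upsilon_{z_0}(N_0,W)\oplus\ker P_{z_0}(N_0,W)$ with $\Upsilon_{z_0}(N_0,W)=\bigoplus_\tau\Upsilon^{[\tau]}_{z_0}$ by Theorem~\ref{T: d[nu]'s are equal}. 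So it suffices to show, for each $\tau$, that
\[
  Q_\tau := \sum_{k,j=0}^{d_\tau-1}\frac1{k!\,j!}\,\alpha_\tau^{kj}\,\scal{W\bar\partial_v^k\phi^*_\tau(0),\cdot\,}\,\partial_v^j\phi_\tau(0)
\]
coincides with $P^{[\tau]}_{z_0}$ for a suitable skew-upper-triangular Hankel matrix $\alpha_\tau$; summing over $\tau$ then gives the claimed identity.

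To pin down $\alpha_\tau$, I would check that $Q_\tau$ agrees with $P^{[\tau]}_{z_0}$ on each summand of $\clH=\bigl(\bigoplus_\mu\Upsilon^{[\mu]}_{z_0}\bigr)\oplus\ker P_{z_0}(N_0,W)$. First, each $\bar\partial_v^k\phi^*_\tau(0)$ is a resonance vector of $(\bar z_0,N^*_0,W)$ by Proposition~\ref{P: TFAE phi* has order >=k} and the anti-holomorphic counterpart of Lemma~\ref{L: phi has order >=k, then ...}, hence lies in $\Upsilon_{\bar z_0}(N^*_0,W)=\im K_0(\bar z_0,N^*_0,W)$, so $W\bar\partial_v^k\phi^*_\tau(0)\in\im\bigl(WK_0(\bar z_0,N^*_0,W)\bigr)=\im Q_{\bar z_0}(N^*_0,W)=\im\bigl(P_{z_0}(N_0,W)\bigr)^*=\bigl(\ker P_{z_0}(N_0,W)\bigr)^\perp$; therefore $Q_\tau$ annihilates $\ker P_{z_0}(N_0,W)$, as does $P^{[\tau]}_{z_0}$. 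For $\mu\neq\tau$, Theorem~\ref{T: (phi*^(j),W phi^(k)) = 0} gives $\scal{W\bar\partial_v^k\phi^*_\tau(0),\partial_v^l\phi_\mu(0)}=\scal{\bar\partial_v^k\phi^*_\tau(0),W\partial_v^l\phi_\mu(0)}=0$, so $Q_\tau$ annihilates $\Upsilon^{[\mu]}_{z_0}$, as does $P^{[\tau]}_{z_0}$. Finally, evaluating $Q_\tau$ on $\tfrac1{l!}\partial_v^l\phi_\tau(0)$ and using the linear independence of the $\partial_v^j\phi_\tau(0)$, the requirement that $Q_\tau$ restrict to the identity on $\Upsilon^{[\tau]}_{z_0}$ becomes
\[
  \sum_{k=0}^{d_\tau-1}\alpha_\tau^{kj}\,a_{k+l}=\delta_{jl},\qquad 0\le j,l\le d_\tau-1,
\]
where $a_s:=\tfrac1{k!\,l!}\scal{\bar\partial_v^k\phi^*_\tau(0),W\partial_v^l\phi_\tau(0)}$ for any admissible $k,l$ with $k+l=s$; that is, $\alpha_\tau=H_\tau^{-1}$ with $H_\tau:=(a_{k+l})_{k,l=0}^{d_\tau-1}$, once it is known that $a_s$ is well defined and $H_\tau$ invertible.

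Well-definedness of $a_s$ (dependence on $k+l$ only) I would extract from the identity $\scal{\bfA_{\bar z_0}(N^*_0,W)f,Wg}=\scal{f,W\bfA_{z_0}(N_0,W)g}$ together with the Jordan-chain relations $\bfA_{z_0}(N_0,W)\partial_v^j\phi_\tau(0)=j\,\partial_v^{j-1}\phi_\tau(0)$ and $\bfA_{\bar z_0}(N^*_0,W)\bar\partial_v^k\phi^*_\tau(0)=k\,\bar\partial_v^{k-1}\phi^*_\tau(0)$ (Lemma~\ref{L: phi has order >=k, then ...}(ii) and its conjugate): these force $k\,\scal{\bar\partial_v^{k-1}\phi^*_\tau(0),W\partial_v^j\phi_\tau(0)}=j\,\scal{\bar\partial_v^k\phi^*_\tau(0),W\partial_v^{j-1}\phi_\tau(0)}$, and dividing by $k!\,j!$ shows the normalised pairing is constant along anti-diagonals, so $H_\tau$ is Hankel. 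Taking $k=0$ and using $\phi^*_\tau(0)\in\clV(\bar z_0,N^*_0)$ together with the fact that $\phi_\tau(v)$ has order exactly $d_\tau$ (Theorem~\ref{T: d[nu]'s are equal}), the definition of order in Section~\ref{S: order of e.path} yields $a_s=0$ for $s=0,\dots,d_\tau-2$, while $a_{d_\tau-1}\neq 0$ because $W\partial_v^{d_\tau-1}\phi_\tau(0)$ is not orthogonal to $\clV(\bar z_0,N^*_0)$ (Theorem~\ref{T: TFAE to chi has order k}(ii)) yet is orthogonal to $\phi^*_\mu(0)$ for every $\mu\neq\tau$ (Theorem~\ref{T: (phi*^(j),W phi^(k)) = 0}), the vectors $\{\phi^*_\mu(0)\}_{\mu=1}^m$ forming a basis of $\clV(\bar z_0,N^*_0)$. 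Hence $H_\tau$ is Hankel, supported on $\{k+l\ge d_\tau-1\}$, with nonvanishing main anti-diagonal; writing $H_\tau=J_{d_\tau}T_\tau$ with $J_{d_\tau}$ the reversal matrix, $T_\tau:=J_{d_\tau}H_\tau$ is upper-triangular Toeplitz with nonzero constant diagonal, so $T_\tau^{-1}$ is again upper-triangular Toeplitz and $\alpha_\tau=H_\tau^{-1}=T_\tau^{-1}J_{d_\tau}$ is Hankel, supported on $\{k+j\le d_\tau-1\}$, i.e.\ skew-upper-triangular Hankel; this closes the argument.

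I expect the main obstacle to be the structural claim on $\alpha_\tau$: it requires threading together the Jordan-chain calculus (the $\bfA$--$\bfB$ adjointness forcing the Hankel shape), the precise matching between the order of the eigenpath $\phi_\tau(v)$ and the vanishing pattern of $\scal{\bar\partial_v^k\phi^*_\tau(0),W\partial_v^l\phi_\tau(0)}$ (which simultaneously gives the anti-triangular shape and the invertibility of $H_\tau$), and the elementary but fiddly fact that the inverse of an invertible anti-upper-triangular Hankel matrix is skew-upper-triangular Hankel. A secondary subtlety, settled by the bi-orthonormality in Lemma~\ref{L: def of phi*(nu)}, is the block-diagonality of $Q_\tau$ when two of the eigenvalue functions $z_\tau(v)$ happen to coincide, a case not literally covered by the hypotheses of Theorem~\ref{T: (phi*^(j),W phi^(k)) = 0} but handled by the same computation since its proof uses only $\scal{\phi^*_\mu(v),\phi_\tau(v)}=\delta_{\mu\tau}$.
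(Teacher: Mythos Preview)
Your proof is correct and follows essentially the same route as the paper's: both identify $\alpha$ as the inverse of the pairing matrix $\beta_{\mu\tau}^{kj}=\tfrac{1}{k!j!}\scal{\bar\partial_v^k\phi^*_\mu(0),W\partial_v^j\phi_\tau(0)}$, use Theorem~\ref{T: (phi*^(j),W phi^(k)) = 0} for block-diagonality, and use the $\bfA$--$\bfB$ adjointness with the Jordan-chain relations to show each block is anti-lower-triangular Hankel, whence its inverse is skew-upper-triangular Hankel. Your organisation (working block-by-block through the cycle projections $P^{[\tau]}_{z_0}$ and checking on each summand of $\clH$) is slightly more explicit than the paper's global Schmidt ansatz, and your remark on repeated eigenvalue functions and the explicit verification that $a_{d_\tau-1}\neq 0$ are careful touches the paper glosses over, but the substance is the same.
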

This theorem is an analogue of \cite[Theorem 5.9.1]{Aza17}, where in this case the matrix $\alpha$ is not necessarily real. 

\begin{proof}
By Theorem~\ref{T: properties of P[nu]}, a natural Jordan basis for the vector space $\im P_{z_0}(N_0,W)$ is given by
\[
 \frac{1}{j!}\partial_v^j\phi_\tau(0), \qquad \tau = 1,\ldots,m, \ j = 0,1,\ldots,d_\tau-1.
\]
It also follows from previous material that 
\[
 \frac{1}{k!}W\bar\partial_v^k\phi^*_\mu(0), \qquad \mu = 1,\ldots,m, \ k = 0,1,\ldots,d_\mu-1,
\]
is a basis of $\im (P_{z_0}(N_0,W))^* = W\im P_{\bar z_0}(N_0^*,W).$
Therefore, there must exist a unique invertible $n\times n$ matrix $\alpha = (\alpha_{\mu\tau}^{kj})$ such that
\[
 P_{z_0}(N_0,W) = \sum_{\mu=1}^m\sum_{k=0}^{d_\mu-1}\sum_{\tau=1}^m\sum_{j=0}^{d_\tau-1} 
      \frac 1{k!j!} \alpha_{\mu\tau}^{kj} \scal{W \bar \partial_{v}^{k} \phi^*_\mu(0),\cdot} \partial_v^j\phi_\tau(0).
\]
It remains to see that $\alpha$ is a direct sum of skew-upper-triangular Hankel matrices.

Since $P_{z_0}$ leaves the vectors $\partial_v^j\phi_\tau(0)$ invariant, we have
\[
 \frac{1}{j'!}\partial_v^{j'}\phi_{\tau'}(0) = \frac{1}{j'!}P_{z_0}(N_0,W)\partial_v^{j'}\phi_{\tau'}(0) = \sum_{\tau,j} \left(\sum_{\mu,k} \alpha_{\mu\tau}^{kj}\beta_{\mu\tau'}^{kj'} \right)\frac 1{j!}\partial_v^{j}\phi_{\tau}(0),
\]
where the matrix $\beta$ is given by
\[
 \beta_{\mu\tau'}^{kj'} = \frac{1}{k!j'!}\scal{\partial_v^{k}\phi^*_{\mu}(0), W\partial_v^{j'}\phi_{\tau'}(0)}.
\]
Hence it must be that $\sum_{\mu,k} \alpha_{\mu\tau}^{kj}\beta_{\mu\tau'}^{kj'} = \delta_{\tau\tau'}\delta_{jj'}$, that is, $\alpha$ is the inverse transpose of $\beta.$ 

Theorem~\ref{T: (phi*^(j),W phi^(k)) = 0} implies $\beta_{\mu\tau}^{kj} = \delta_{\mu\tau}\beta_{\mu\tau}^{kj},$ or in other words, $\beta$ is a direct sum of $m$ matirces of size $d_\tau\times d_\tau.$
Moreover, using Theorem~\ref{T: TFAE to chi has order k}(vi),
\begin{align*}
\beta_{\tau\tau}^{kj} &= \frac{1}{k!j!}\scal{\partial_v^{k}\phi^*_{\tau}(0), W\partial_v^{j}\phi_{\tau}(0)}
 \\ &= \frac{1}{k!(j+1)!}\scal{\partial_v^{k}\phi^*_{\tau}(0), W\bfA_{z_0}(N_0,W)\partial_v^{j+1}\phi_{\tau}(0)}
 \\ &= \frac{1}{k!(j+1)!}\scal{\bfA_{\bar z_0}(N^*_0,W)\partial_v^{k}\phi^*_{\tau}(0), W\partial_v^{j+1}\phi_{\tau}(0)}
 \\ &= \frac{1}{(k-1)!(j+1)!}\scal{\partial_v^{k-1}\phi^*_{\tau}(0), W\partial_v^{j+1}\phi_{\tau}(0)}
 \\ &= \beta_{\tau\tau}^{k-1,j-1}.
\end{align*}
By the same argument, if $k+j+1 \leq d_\tau-1$ then
\begin{align*}
 \beta_{\tau\tau}^{kj} = \beta_{\tau\tau}^{0,j+k} &= \frac{1}{(j+k)!}\scal{\phi^*_{\tau}(0), W\partial_v^{j+k}\phi_{\tau}(0)} \\ &= \frac{1}{(j+k)!}\scal{\bfA_{\bar z_0}(N^*_0,W)\phi^*_{\tau}(0), W\partial_v^{j+k+1}\phi_{\tau}(0)} = 0.
\end{align*}
Thus the matrix $\beta$ is a direct sum of Hankel matrices with zeros above the main skew-diagonal.
It follows that $\alpha = \beta^{-1}$ is a direct sum of Hankel matrices with zeros below the main skew-diagonal.
\end{proof}

\section{Order and tangency to the resonance set}\label{S: tangent high order}
For a given eigenvalue $z_0\in\sigma_d(N_0)$, we say a direction $W$ has order~$k$ if the triple $(z_0,N_0,W)$ has order~$k$. 
In this section we demonstrate a direct connection between high order directions~$W$ and those that are tangent to the complex variety of operators in~$\clA$ which have $z_0$ as an eigenvalue. 
This connection was observed in~\cite{Aza17} in the case of a real isolated eigenvalue~$z_0$ and self-adjoint~$N_0$. 

Given a fixed number~$z_0$ outside the essential spectrum of the affine space $\clA$, 
the \emph{resonance set} $\euR(z_0)$ is defined by 
\[
  \euR(z_0) := \{N \in \clA \colon z_0 \in \sigma_d(N)\}.
\]
We say that an analytic path of operators $N(v)$ in $\clA$ is \emph{resonant} (at~$z_0$),
if $N(v) \in \euR(z_0)$ for all~$v.$  
If an analytic path is not resonant, then the following trivial lemma holds. 
\begin{lemma} For an analytic curve $\gamma$ in $\clA$ which is not contained in $\euR(z_0),$ 
the set $\euR(z_0) \cap \gamma$ is discrete. 
\end{lemma}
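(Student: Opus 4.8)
The statement to be proved is elementary: for an analytic curve $\gamma$ in $\clA$ that is not contained in $\euR(z_0)$, the intersection $\euR(z_0)\cap\gamma$ is discrete. The plan is to reduce the claim to the classical fact that the zero set of a non-identically-vanishing holomorphic function of one complex variable is discrete. Parametrising $\gamma$ by an analytic map $v\mapsto N(v)$ from a domain $G\subseteq\mbC$ into $\clA$, a point $v$ belongs to $\euR(z_0)\cap\gamma$ precisely when $z_0\in\sigma_d(N(v))$, that is, when $z_0$ is an isolated eigenvalue of $N(v)$.

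First I would fix a point $v_0\in G$ with $N(v_0)\notin\euR(z_0)$, which exists by hypothesis, so that $z_0\in\rho(N(v_0))$ (recall that for operators in $\clA$, being outside $\euR(z_0)$ means $z_0$ lies in the resolvent set, since $z_0$ is outside the common essential spectrum $\sigma_{ess}(\clA)$ and hence can only belong to the discrete spectrum or the resolvent set). By the analytic Fredholm alternative applied to the holomorphic family $v\mapsto R_{z_0}(N(v))W$—or more directly, invoking the general theory recalled in Section~\ref{S: prelim} for holomorphic families of closed operators—the resolvent $R_{z_0}(N(v))$ is a meromorphic operator-valued function of $v$ on $G$, with poles exactly at those $v$ where $z_0\in\sigma_d(N(v))$, i.e. exactly on $\euR(z_0)\cap\gamma$. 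Since $R_{z_0}(N(v_0))$ is bounded, the meromorphic function is not identically infinite, so its pole set is discrete in $G$.

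Alternatively, and perhaps more transparently, one can use the second resolvent identity in the form
\[
 R_{z_0}(N(v)) = \bigl(1 + (v - v_0)R_{z_0}(N(v_0))W\bigr)^{-1}R_{z_0}(N(v_0))
\]
when $\gamma$ is an affine line, or more generally factor through the analytic family of compact operators $K(v) := R_{z_0}(N(v_0))\,(N(v_0) - N(v))$, which is holomorphic in $v$ and compact-operator-valued. Then $z_0\in\sigma_d(N(v))$ if and only if $1\in\sigma(K(v))$, equivalently $1-K(v)$ is not invertible. By the analytic Fredholm theorem (e.g.~\cite[Theorem~VI.14]{ReeSim1}), the set of $v$ for which $1-K(v)$ fails to be invertible is either all of $G$ or a discrete subset; since it is not all of $G$ (it excludes $v_0$), it is discrete.

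The argument has essentially no obstacle; the only point requiring a word of care is the reduction that membership in $\euR(z_0)$ is equivalent to non-invertibility of the relevant Fredholm operator pencil—i.e., that $z_0\notin\sigma_{ess}(N(v))$ for every $v$, so that $z_0\in\sigma(N(v))$ forces $z_0\in\sigma_d(N(v))$. This is immediate from Weyl's theorem, since all operators in $\clA$ share the essential spectrum $\sigma_{ess}(\clA)$ and $z_0$ was chosen outside it. With this in hand, the analytic Fredholm alternative finishes the proof.
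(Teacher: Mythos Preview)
Your argument is correct. The paper itself declares this lemma ``trivial'' and gives no proof, so there is nothing to compare; your write-up supplies exactly the standard justification one would expect---the analytic Fredholm alternative applied to the holomorphic compact-operator-valued family $K(v)=R_{z_0}(N(v_0))(N(v_0)-N(v))$, together with the observation (via Weyl's theorem) that $z_0\notin\sigma_{ess}(\clA)$ forces $z_0\in\sigma(N(v))\Leftrightarrow z_0\in\sigma_d(N(v))$.
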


Let~$z_0 \in \sigma_d(N_0),$ where $N_0 \in \clA$, and let $k\in\mbN$. 
We say that a direction~$W \in \clA_0$ is \emph{tangent to order} at least $k$ at $N_0,$
if there exists a resonant path $N(v)$ such that for some numbers $c_2, c_3, \ldots, c_{k-1},$ 
\begin{equation}\label{F: resonant path}
  N(v) = N_0 + v W + \sum_{j=2}^{k-1} c_j v^j W + O(v^k), \ v \to 0.
\end{equation}
We say that~$W$ is \emph{tangent} at $N_0$ if it is tangent to order at least two.
Otherwise, we say that~$W$ is \emph{transversal} at $N_0.$
By changing the parameter $v$, the coefficients $c_j$, $j=2,\ldots,k-1$, can be eliminated so that
\[
 N(v) = N_0 + v W + O(v^k), \ v \to 0.
\]
A resonant path of this form is called {\em standard}.


\subsection{Tangent directions have high orders}
The next theorem is an analogue of~\cite[Theorem 4.1.2]{Aza17}.

\begin{thm}\label{T: tangent high order}
Let $k\geq 2,$ let $z_0$ be a semisimple eigenvalue of $N_0$ and let~$W$ be a direction at~$N_0$ for which Assumption~\ref{A: weak conjugate} holds for all eigenvalues of the~$z_0$-group.
If $N(v)$ is a resonant path tangent to~$W$ at~$N_0$ to order at least~$k$ 
and if $\chi(v)$ is a corresponding analytic eigenpath, then 
\begin{enumerate} 
  \item the vectors $\chi(0), \ \chi'(0), \ \ldots,\ \chi^{(k-1)}(0)$ have orders respectively
  $1,2, \ldots, k,$
  \item the direction~$W$ has order at least $k,$
  \item for any $l=1,2,\ldots,k$
  \[
    \bfA_{z_0}(N_0,W) \chi^{(l-1)}(0) = (l-1) \chi^{(l-2)}(0) + 
        \sum_{j=2}^{l-1} j!\binom{l-1}{j} c_j \chi^{(l-1-j)}(0), 
  \]
  where the numbers $c_j$ are as in~\eqref{F: resonant path}, and 
  \item the eigenvector $\chi(0)$ has depth at least $k-1.$ 
\end{enumerate}
\end{thm}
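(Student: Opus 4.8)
The plan is to imitate the proof of Lemma~\ref{L: chi has order >= k} but with the eigenvalue equation for the path $N_v = N(v)$ rather than $N_0 + vW$, and then transfer the conclusions to the ``constant-direction'' operator $\bfA_{z_0}(N_0,W)$ via the Laurent-coefficient machinery of Section~\ref{S: Rz(N_v)}. First I would write the standard resonant path as $N(v) = N_0 + vW + \sum_{j\geq 2}c_j v^j W + O(v^k)$ and note that, since $N(v)\in\euR(z_0)$ for all $v$, the eigenvalue function $z(v)$ of $N(v)$ at $z_0$ is identically $z_0$, hence $z^{(l)}(0)=0$ for all $l\geq 1$. Differentiating the eigenvalue equation $N(v)\chi(v) = z_0\chi(v)$ exactly $l$ times at $v=0$ and using $N^{(1)}(0)=W$, $N^{(j)}(0) = j!\,c_j W$ for $2\leq j\leq k-1$ (with $c_1:=1$), I obtain
\[
  \sum_{j=1}^{\min(l,k-1)} \binom{l}{j} j!\,c_j\, W\chi^{(l-j)}(0) + N_0\chi^{(l)}(0) = z_0 \chi^{(l)}(0),
\]
valid for $l \leq k-1$ (the tail $O(v^k)$ contributing nothing at these orders). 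This rearranges to
\[
  (N_0 - z_0)\chi^{(l)}(0) = -\sum_{j=1}^{l} \binom{l}{j} j!\,c_j\, W\chi^{(l-j)}(0), \qquad l=1,\ldots,k-1.
\]
This is the analogue of~\eqref{F: (Hs-z)chi(j)=-jVchi(j-1)} with extra correction terms, and it immediately gives item~(iii) after rewriting, as I explain next.

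Next I would prove items (i) and (iii) together by induction on $l$, using the Laurent-coefficient identities. Rewriting the displayed identity for $z_0\notin\sigma_d(N_v)$ gives, in analogy with~\eqref{F: [1+RV]phi(j)=-RV phi(j-1)},
\[
  \big(1 - v R_{z_0}(N_v)W\big)\chi^{(l)}(0) = -\sum_{j=1}^{l}\binom{l}{j} j!\,c_j\, R_{z_0}(N_v)W\,\chi^{(l-j)}(0)
\]
(more precisely, $R_{z_0}(N_v)W$ should be read as $R_{z_0}(N_0+vW)W$; the point is that $R_{z_0}(N_v)W$ is the same meromorphic object governing $\bfA_{z_0}(N_0,W)$ and $P_{z_0}(N_0,W)$ through~\eqref{F: Kj = oint R vj},~\eqref{F: Ak=K(-k)W and Bk=WK(-k)}). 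Applying the contour integral $\frac{1}{2\pi i}\oint_{C(0)}\cdot\,dv$ and using that $R_{z_0}(N_v)W$ preserves resonance order while $1 - vR_{z_0}(N_v)W$ drops it by one (exactly as in the proof of Lemma~\ref{L: this is enough}), the induction shows that $\chi^{(l-1)}(0)$ has order $l$ for $l=1,\ldots,k$, giving~(i), and extracting the $K_{-1}=K_0$ coefficient produces
\[
  \bfA_{z_0}(N_0,W)\chi^{(l-1)}(0) = (l-1)\chi^{(l-2)}(0) + \sum_{j=2}^{l-1} j!\binom{l-1}{j} c_j\,\chi^{(l-1-j)}(0),
\]
which is~(iii). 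Here the base case $l=1$ is just that $\chi(0)$ is an eigenvector, i.e.\ a resonance vector of order $1$, and the $c_j$-terms vanish.

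Finally, item~(ii) follows from~(i): since $\chi^{(k-1)}(0)$ is a resonance vector of order $k$, the space $\Upsilon^k_{z_0}(N_0,W)$ is strictly larger than $\Upsilon^{k-1}_{z_0}(N_0,W)$, so the order $d$ of the triple $(z_0,N_0,W)$ is at least $k$, i.e.\ $W$ has order at least $k$. And item~(iv) follows from~(iii): solving the $l=k$ relation backwards, or more simply iterating~(iii), expresses $\chi(0)$ in the image of $\bfA_{z_0}(N_0,W)^{k-1}$ — one checks by downward substitution that $\bfA_{z_0}(N_0,W)^{k-1}$ applied to a suitable linear combination of $\chi(0),\chi'(0),\ldots,\chi^{(k-1)}(0)$ equals a nonzero multiple of $\chi(0)$ (the leading term being $(k-1)!\,\chi(0)$ from the iterated first terms, with the $c_j$-corrections being lower order and not affecting the top). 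Hence $\chi(0)$ has depth at least $k-1$ in the sense of~\eqref{F: depth of res vector}.

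The main obstacle I anticipate is the bookkeeping in items (iii) and (iv): one must verify that the triangular system relating the $\bfA_{z_0}(N_0,W)$-images of the $\chi^{(l)}(0)$ to the $\chi^{(l-1)}(0)$ (with the Hankel-like pattern of $c_j$ coefficients) is genuinely unipotent, so that the depth conclusion $\chi(0)\in\im\bfA_{z_0}(N_0,W)^{k-1}$ does not collapse because of an unlucky cancellation among the $c_j$-terms. This is essentially a statement that the ``correction'' matrix $\bigl(j!\binom{l-1}{j}c_j\bigr)$ is strictly lower triangular in $l$, so the leading nilpotent behaviour is unaffected — but it must be written out carefully. Everything else is a routine, if lengthy, adaptation of the arguments already given for Lemma~\ref{L: chi has order >= k} and Lemma~\ref{L: this is enough}.
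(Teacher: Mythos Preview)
Your proposal is correct and follows essentially the same route as the paper's own proof: differentiate $N(v)\chi(v)=z_0\chi(v)$, rewrite as $(1-vR_{z_0}(N_0+vW)W)\chi^{(l-1)}(0)=-R_{z_0}(N_0+vW)W\big(\cdots\big)$, use the order-preserving/order-lowering dichotomy for the induction in~(i), and integrate over a small contour for~(iii); items~(ii) and~(iv) are then immediate. Your worry about cancellations in~(iv) is unnecessary: the relations in~(iii) make $\bfA_{z_0}$ strictly lower-triangular on $\operatorname{span}\{\chi(0),\ldots,\chi^{(k-1)}(0)\}$ with nonzero subdiagonal $1,2,\ldots,k-1$, so $\bfA_{z_0}^{\,k-1}\chi^{(k-1)}(0)=(k-1)!\,\chi(0)$ on the nose.
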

\begin{proof}(cf.~\cite[Theorem 4.1.2]{Aza17}) 
We~prove the first item by induction.
The vector $\chi(0)$ has order~$1$ since it is an eigenvector.
Now assume the assertion holds for all numbers up to but not including~$l$.
Since the path~$N(v)$, which is tangent to~$W$ at~$N_0$ to order at least~$l$, is also tangent to order at least $l-1$, it follows from this assumption that the vectors $\chi(0), \chi'(0), \ldots, \chi^{(l-2)}(0)$ have orders $1,2,\ldots,l-1$ respectively.
Differentiating the eigenvalue equation $N(v)\chi(v)=z_0\chi(v)$ $l-1$~times at $v=0$ gives
\begin{align*}
 z_0\chi^{(l-1)}(0) &= \sum_{j=0}^{l-1}\binom{l-1}{j}N^{(j)}(0)\chi^{(l-1-j)}(0)
 \\ &= N_0\chi^{(l-1)}(0) + (l-1)W\chi^{(l-2)}(0) + \sum_{j=2}^{l-1}j!\binom{l-1}{j}c_jW\chi^{(l-1-j)}(0),
\end{align*}
where the second line follows from the tangency of $N(v)$ to $W$ at $N_0$ to order~$l$.
Choosing~$v$ so that~$z_0$ belongs to the resolvent set of $N_v = N_0 + vW,$ the above equality can be rewritten as
\[
 (N_v - z_0)\chi^{(l-1)}(0) -vW\chi^{(l-1)}(0) = -(l-1)W\chi^{(l-2)}(0)- \sum_{j=2}^{l-1}j!\binom{l-1}{j}c_jW\chi^{(l-1-j)}(0),
\]
which we multiply by $R_{z_0}(N_v)$ in order to obtain
\begin{multline}\label{F: (1-vRW)chi^(l-1) = sum}
 (1 - vR_{z_0}(N_v)W)\chi^{(l-1)}(0) \\ = - R_{z_0}(N_v)W\Bigg((l-1)\chi^{(l-2)}(0) + \sum_{j=2}^{l-1}j!\binom{l-1}{j}c_j \chi^{(l-1-j)}(0)\Bigg).
\end{multline}
Recall that $R_{z_0}(N_v)W$ preserves the order of resonance vectors whereas $(1 - v R_{z_0}(N_v)W)$ decreases it by 1.
Therefore, since the vector on the right of~\eqref{F: (1-vRW)chi^(l-1) = sum} has order $l-1,$ the vector $\chi^{(l-1)}(0)$ must have order~$l$.
This completes the proof of~(i).

(ii) follows immediately from (i).

To prove (iii), we integrate~\eqref{F: (1-vRW)chi^(l-1) = sum} with respect to~$v$ along a contour enclosing~$0$ and refer to the definitions~\eqref{F: Kj = oint R vj},~\eqref{F: Pz=K0*W, Qz=W*K0}, and~\eqref{F: Ak=K(-k)W and Bk=WK(-k)} of $P_{z_0}(N_0,W)$ and $\bfA_{z_0}(N_0,W).$

(iv) follows immediately from (iii).
\end{proof}

An immediate corollary is the following analogue of~\cite[Proposition 4.1.4]{Aza17}.
\begin{cor}
Under the assumptions of Theorem~\ref{T: tangent high order}, the path $N(v)$ is standard iff
\[
\bfA_{z_0}(N_0,W)\chi^{(l-1)}(0) = (l-1)\chi^{(l-2)}(0), \quad l=1,\ldots,k.
\]
\end{cor}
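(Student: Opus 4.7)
The plan is to read both directions of the equivalence directly off Theorem~\ref{T: tangent high order}(iii), whose conclusion expresses $\bfA_{z_0}(N_0,W)\chi^{(l-1)}(0)$ as the desired expression $(l-1)\chi^{(l-2)}(0)$ plus a remainder sum weighted by the coefficients $c_j$ of the expansion~\eqref{F: resonant path}. Thus the corollary amounts to saying: the remainder sums
\[
 \sum_{j=2}^{l-1} j!\binom{l-1}{j} c_j \chi^{(l-1-j)}(0)
\]
vanish for all $l=1,\dots,k$ if and only if every $c_j$ is zero.

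First, I would handle the forward direction, which is immediate: if $N(v)$ is standard then $c_2=\dots=c_{k-1}=0$, every remainder sum is empty, and Theorem~\ref{T: tangent high order}(iii) reduces to the claimed identity.

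For the converse, I would subtract the identity of the corollary from Theorem~\ref{T: tangent high order}(iii) to get vanishing of each remainder sum and then specialise to $l=k$, obtaining
\[
 \sum_{j=2}^{k-1} j!\binom{k-1}{j} c_j \chi^{(k-1-j)}(0) = 0.
\]
The vectors $\chi^{(k-1-j)}(0)$, as $j$ ranges over $2,\dots,k-1$, are $\chi^{(k-3)}(0),\dots,\chi(0)$, which by Theorem~\ref{T: tangent high order}(i) have orders $k-2,\dots,1$ respectively. Since they lie in the strictly increasing chain $\Upsilon^1_{z_0}\subsetneq\Upsilon^2_{z_0}\subsetneq\dots\subsetneq\Upsilon^{k-2}_{z_0}$, they are linearly independent, and the vanishing of the above linear combination forces $c_j=0$ for every $j=2,\dots,k-1$, so $N(v)$ is standard. (Equivalently, one could proceed inductively from $l=3$ upward: at step $l$, the assumption $c_2=\dots=c_{l-2}=0$ collapses the $l$-th remainder sum to $(l-1)!\, c_{l-1}\chi(0)$, and $\chi(0)\ne 0$ yields $c_{l-1}=0$.)

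There is no real obstacle; the statement is a direct formal consequence of part~(iii) of the preceding theorem, with the only substantive input being the linear independence of resonance vectors of distinct orders, which is automatic from the ascending chain $\Upsilon^l_{z_0}$.
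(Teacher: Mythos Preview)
Your argument is correct and is essentially what the paper has in mind: the corollary is stated as ``an immediate corollary'' of Theorem~\ref{T: tangent high order}(iii) with no further proof, and your derivation---reading off part~(iii) and using that vectors of strictly increasing order are linearly independent---is exactly the intended reasoning.
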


\subsection{High order directions are tangent}\label{S: high order tangent}

The following is an off-real-axis analogue of~\cite[Theorem 4.2.1]{Aza17}.
\begin{thm} \label{T: intersection is a curve}
Let~$z_0$ be a simple eigenvalue of $N_0$, 
$\chi$ a corresponding normalised eigenvector of~$N_0$ and~$W_0 = \scal{\chi,\cdot}\chi.$
Then the intersection of the affine space 
\[
  \alpha := N_0 + \mbC W + \mbC W_0
\]
with a sufficiently small neighbourhood of the point~$N_0$ in $\Rset$
consists of one and only one simple complex analytic curve~$\gamma.$ 
\end{thm}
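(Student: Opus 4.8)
The plan is to recognise the intersection $\Rset \cap \alpha$ as a zero set in the two-dimensional complex parameter space and to apply the analytic implicit function theorem. Parametrise the affine space $\alpha$ by $(v,w)\in\mbC^2$ via $N(v,w) := N_0 + vW + wW_0$. Since $z_0$ is a simple eigenvalue of $N_0$, for $(v,w)$ in a neighbourhood of the origin the operator $N(v,w)$ has a unique eigenvalue $z(v,w)$ near $z_0$, depending holomorphically on $(v,w)$, with $z(0,0)=z_0$; the eigenprojection is $E(z(v,w),N(v,w))$ and one can take a holomorphic normalised eigenvector $\chi(v,w)$ with $\chi(0,0)=\chi$, together with its conjugate $\chi^*(v,w)$ (which exists and is holomorphic here because $z_0$ is simple, hence semisimple with Assumption~\ref{A: weak conjugate} holding automatically), normalised so that $\scal{\chi^*(v,w),\chi(v,w)}=1$. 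The condition $N(v,w)\in\Rset$ is exactly $z(v,w) = z_0$, i.e.\ $G(v,w) := z(v,w) - z_0 = 0$.

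So the task reduces to showing that $\{G=0\}$ is, near the origin, a single smooth complex-analytic curve. By the standard first-order perturbation formula,
\[
  \partial_w G(0,0) = \scal{\chi^*, W_0\chi} = \scal{\chi^*,\chi}\scal{\chi,\chi} = 1 \neq 0,
\]
using $\scal{\chi^*,\chi}=1$ and $\|\chi\|=1$. (If $\scal{\chi,\chi}$ were not $1$ one simply rescales; the point is that $W_0\chi = \chi$ is nonzero and not orthogonal to $\chi^*$, so the $w$-derivative is nonzero.) Hence by the holomorphic implicit function theorem there is a unique holomorphic function $w = g(v)$, defined for $v$ in a neighbourhood of $0$ with $g(0)=0$, such that $G(v,g(v))=0$ and such that these are the only solutions of $G=0$ in a neighbourhood of the origin. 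The curve $\gamma := \{N_0 + vW + g(v)W_0 : |v| < \delta\}$ is then the claimed intersection; it is a simple (i.e.\ regular, non-self-intersecting) complex-analytic curve because $v\mapsto(v,g(v))$ is a holomorphic embedding, being a graph.

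The one genuine subtlety — and the step I expect to require the most care — is verifying that $\partial_w G(0,0)\neq 0$, equivalently that the direction $W_0$ is \emph{transversal} at $N_0$ in the sense of Section~\ref{S: tangent high order}, so that the implicit function theorem applies with $w$ as the dependent variable. This is where the specific choice $W_0 = \scal{\chi,\cdot}\chi$ matters: it is designed precisely so that $W_0\chi = \chi = \phi(0)$, and since $z_0$ is simple, $\phi(0)\notin\im\bfA_{z_0}(N_0,W_0)$ (the root space for the direction $W_0$ is one-dimensional), so by Theorem~\ref{T: criteria for splitting} (equivalence of (ii),(iii),(v)) the eigenvalue derivative in the $W_0$-direction is nonzero. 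I would present the direct computation $\scal{\chi^*,W_0\chi}=1$ as the cleanest route, noting that $\chi^*$ may be taken holomorphic in $(v,w)$ near the origin since $z_0$ is simple. Everything else is a routine application of the analytic implicit function theorem in two complex variables, together with the uniqueness clause of that theorem to get the "one and only one" assertion.
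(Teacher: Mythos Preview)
Your proof is correct and its core coincides with the paper's: both rest on the observation that $\partial_s z(0,0) = \scal{\chi^*,W_0\chi} = 1 \neq 0$ and then invoke the holomorphic implicit function theorem to obtain the curve $s = s(v)$ (equivalently $w = g(v)$ in your notation), from which existence, uniqueness, smoothness, and simplicity all follow at once. The paper additionally opens with a separate topological argument --- deforming the loops $z(\eps_0 v,\, s\,\partial\mbD)$ from $s=s_0$ down to $s=0$ to show that every punctured neighbourhood of $N_0$ in $\alpha$ meets $\euR(z_0)$ --- but this existence step is rendered redundant once the implicit function theorem is applied; your direct route is cleaner and loses nothing.
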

\begin{proof} First we show that any deleted neighbourhood of $N_0$ in the affine plane $\alpha$ 
has a resonance point. 
Assume the contrary: there exists a convex neighbourhood~$O$ of $N_0$ in 
$\alpha$ which does not contain resonance points other than $N_0$. 
Choose positive~$s_0,\eps_0$ so that 
\[
  N_0 + \eps_0 \bar \mbD W + s_0 \bar \mbD W_0 \subset O,
\]
where $\bar \mbD$ denotes the closed unit disk.
Soon we will also impose another condition on $\eps_0.$ 
The eigenvalue of an operator $N_0 + v W + s W_0$ from $\alpha$ 
which is obtained by perturbation of~$z_0$ we denote $z(v,s).$ 
Since $z_0$ is simple, there can be no branching of the function $z(v,s)$ which is therefore single-valued in a small enough neighbourhood of $(0,0).$

The set $z(0,s_0 \partial \mbD)$ is a circle in the~$z$-plane with centre at $z_0 = z(0,0)$ 
and radius~$s_0.$ We choose $\eps_0$ small enough so that all $z(\eps_0\mbD,0)$ are inside 
this circle. 
Now, we fix $v \in \bar \mbD$ and deform~$s$~in 
\[
  z(\eps_0 v, s \partial \mbD)
\]
from~$s_0$ to zero. Clearly, for all $v \in \mbD$ these are closed loops. 
For $s = s_0$ and small enough fixed~$v$ this loop encloses~$z_0$ and for $s = 0$ this loop degenerates to a point 
$z(\eps_0 v,0)$ different (by assumption) from~$z_0$. Hence, for some $s$ between~$s_0$ and $0$ the loop has 
to cross~$z_0.$ The claim is proved. 

The intersection of $\alpha$ with the resonance set must be a union of smooth curves.
Since~$z_0$ has geometric multiplicity~$1$ at $N_0$, such a curve is unique and simple near $N_0$. 

\medskip
The eigenvalue $z = z(v,s)$ depends analytically on~$v$ and~$s$ 
and $\frac{\partial z}{\partial s}(0,0) = 1 \neq 0.$ 
Therefore, the implicit equation $z_0 = z(v,s(v))$ defines a single-valued analytic function $s(v)$ 
with $s(0) = 0.$ 

%
\end{proof}


For a simple eigenvalue~$z_0=z(0)$ the operator~$W_0$ is uniquely determined,
and therefore, in this case so is the curve~$\gamma$ from Theorem~\ref{T: intersection is a curve},
which thus depends only on the triple $(z_0,N_0,W).$ We denote this simple curve by 
\begin{equation} \label{F: gamma(z,N,W)}
  \gamma(z_0,N_0,W).
\end{equation}

\bigskip
Proof of the next theorem follows that of~\cite[Theorem~4.3.1]{Aza17} with some adjustments. 
In~this theorem we assume that the geometric multiplicity $m$ is equal to $1,$ and so we can and do write $z(v)$ etc., instead of $z_\tau(v)$~etc. 
However we note that this assumption is not necessary for the proof as long as we have the curve~$\gamma,$ the existence of which is implied by the condition $m=1$ according to Theorem~\ref{T: intersection is a curve}.

\begin{thm} \label{T: high order then tangent}
Let~$k$ be an integer $\geq 2.$
Let~$z(v)$ be a simple eigenvalue function of $N_v$, $\phi(v)$ a corresponding eigenvector function, and let $\chi_0 = \phi(0).$  
Let $\gamma$ be the curve \eqref{F: gamma(z,N,W)}.
If~$\chi_0$ is an eigenvector of depth $\geq k-1$ for the triple $(z_0; N_0,W),$
then 
\begin{enumerate}
  \item the direction~$W$ is tangent to the curve $\gamma$ to order at least $k,$
  \item in the Taylor expansion 
\begin{equation} \label{F: chi(s)=chi0+s*chi1+...}
  \chi(v) = \sum_{j=0}^\infty v^j \chi_j 
\end{equation}
of an eigenpath $\chi(v)$ corresponding to $\gamma(v),$ 
the vectors 
$
  \chi_0, \chi_1, \ldots, \chi_{k-1}
$
are resonance vectors of orders respectively $1,2,\ldots,k,$
  \item for any eigenpath \eqref{F: chi(s)=chi0+s*chi1+...} corresponding to 
$\gamma$ and for all $j=1,2,\ldots,k-1$ the vector
$\bfA_{z_0}(N_0,W) \chi_j$ is a linear combination of vectors
$\chi_0, \chi_1, \ldots, \chi_{j-1}.$ Moreover, if the parametrisation 
of the curve $\gamma$ is standard then 
$
  \bfA_{z_0}(N_0,W) \chi_j = \chi_{j-1}.
$
  \item the vectors 
$
  \chi_0, \chi_1, \ldots, \chi_{k-2},
$
have depth at least one and are~$W$-orthogonal to $\clV(\bar z_0,N_0^*).$
\end{enumerate} 
\end{thm}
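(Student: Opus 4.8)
The plan is to use the explicit presentation of the curve $\gamma$ furnished by the proof of Theorem~\ref{T: intersection is a curve}, and then to reduce (ii)--(iv) to Theorem~\ref{T: tangent high order}. Since $z_0$ is a simple eigenvalue of $N_0$ there is no branching nearby: the eigenvalue $z(v,s)$ of $N_0+vW+sW_0$ obtained by perturbing $z_0$ is single-valued and jointly analytic near $(0,0)$, with $\partial_s z(0,0)=1$ as computed there, and near $N_0$ the curve $\gamma$ is the graph $s=s(v)$ of the analytic solution of $z(v,s(v))\equiv z_0$, $s(0)=0$ --- a graph over $v$, not over $s$, precisely because $\partial_v z(0,0)=0$ while $\partial_s z(0,0)\neq 0$. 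Here the vanishing $\partial_v^j z(0,0)=0$ for $j=1,\dots,k-1$ says exactly that the eigenvalue function $z(v,0)=z(v)$ of $N_v=N_0+vW$ has order $\geq k$ at $0$ in the sense of Lemma~\ref{L: chi has order >= k}; this holds by Lemma~\ref{L: last lemma}, since $\chi_0=\phi(0)$ has depth $\geq k-1$ and Assumption~\ref{A: weak conjugate} is in force ($z_0$ being simple). Equivalently, $z(v,0)-z_0=O(v^k)$.

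First I would establish (i), i.e.\ $s(v)=O(v^k)$. The key point is the factorisation
\[
 z(v,s)-z_0 = \bigl(z(v,0)-z_0\bigr) + s\,h(v,s), \qquad h(v,s):=\int_0^1 \partial_s z(v,ts)\,dt,
\]
with $h$ analytic near $(0,0)$ and $h(0,0)=\partial_s z(0,0)=1$. Putting $s=s(v)$ and using $z(v,s(v))\equiv z_0$ gives $0=\bigl(z(v,0)-z_0\bigr)+s(v)\,h(v,s(v))$; as $h(v,s(v))\to 1$ when $v\to 0$ it is a unit near $0$, whence $s(v)=-\bigl(z(v,0)-z_0\bigr)/h(v,s(v))=O(v^k)$. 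Therefore $\gamma(v)=N_0+vW+s(v)W_0=N_0+vW+O(v^k)$ is a standard resonant path tangent to $W$ at $N_0$ to order at least $k$, proving (i).

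Parts (ii) and (iii) then follow by applying Theorem~\ref{T: tangent high order} to the resonant path $N(v)=\gamma(v)$ and the analytic eigenpath $\chi(v)$ (which exists as $z_0$ is simple; normalise it so that $\chi(0)=\chi_0$); its hypotheses hold since $z_0$, being simple, is semisimple and Assumption~\ref{A: weak conjugate} holds for the $z_0$-group. Part (i) of that theorem gives that $\chi^{(j)}(0)$ has order $j+1$ for $j=0,\dots,k-1$, and since order is unaffected by nonzero scaling, $\chi_j=\tfrac{1}{j!}\chi^{(j)}(0)$ has order $j+1$, which is our (ii). Dividing part (iii) of that theorem by factorials yields $\bfA_{z_0}(N_0,W)\chi_j=\chi_{j-1}+\sum_{i=2}^{j}c_i\chi_{j-i}$ for $j=1,\dots,k-1$; for the standard parametrisation just found all $c_i$ vanish, so $\bfA_{z_0}(N_0,W)\chi_j=\chi_{j-1}$, while an arbitrary analytic parametrisation of $\gamma$ differs from the standard one by $v\mapsto v+O(v^2)$ and so produces the coefficients $c_i$, giving in every case the asserted linear combination of $\chi_0,\dots,\chi_{j-1}$; this is (iii).

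For (iv), rearranging the identity from (iii) gives $\chi_j=\bfA_{z_0}(N_0,W)\chi_{j+1}-\sum_{i=2}^{j+1}c_i\chi_{j+1-i}$ for $j=0,\dots,k-2$, and an upward induction on $j$ (with empty sum when $j=0$, so $\chi_0=\bfA_{z_0}(N_0,W)\chi_1$) shows each of $\chi_0,\dots,\chi_{k-2}$ lies in the subspace $\im\bfA_{z_0}(N_0,W)$, i.e.\ has depth at least one. Finally, for $\psi=\bfA_{z_0}(N_0,W)\eta$ and any $f\in\clV(\bar z_0,N_0^*)$ we have $\scal{f,W\psi}=\scal{\bfA_{\bar z_0}(N_0^*,W)f,W\eta}=0$, since $\bfA_{\bar z_0}(N_0^*,W)$ maps the eigenspace $\clV(\bar z_0,N_0^*)=\Upsilon^1_{\bar z_0}(N_0^*,W)$ into $\Upsilon^0_{\bar z_0}(N_0^*,W)=\{0\}$; hence $\chi_0,\dots,\chi_{k-2}$ are $W$-orthogonal to $\clV(\bar z_0,N_0^*)$. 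I expect the main obstacle to be the first step --- securing the high-order vanishing $s(v)=O(v^k)$ of the $W_0$-component of $\gamma$ --- which rests on viewing $\gamma$ as a graph over $v$ and on the factorisation above, plus the care needed to match the notion of a \emph{standard} path with the vanishing of the $c_i$ when invoking Theorem~\ref{T: tangent high order}; everything after Step~1 is bookkeeping with that theorem and the nilpotent algebra of $\bfA_{z_0}(N_0,W)$.
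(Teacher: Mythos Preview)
Your argument is correct, and it takes a genuinely different route from the paper's own proof. The paper argues by a single induction on~$k$: writing $\gamma(v)=N_0+\sum_{j\geq 1}v^j(\alpha_jW+\beta_jW_0)$ and differentiating $N(v)\chi(v)=z_0\chi(v)$, it shows at each step that $\beta_l=0$ by pairing the $l$th Taylor identity against the conjugate vector $\chi_0^*=\phi^*(0)$ and using the depth hypothesis in the form $\chi_0^*=\bfA_{\bar z_0}^{\,l}g$; then Lemma~\ref{L: this is enough} yields the order of $\chi_l$ and the formula for $\bfA_{z_0}\chi_l$, and the induction closes. So in the paper, items (i)--(iv) are obtained simultaneously and from scratch, without appealing to Theorem~\ref{T: tangent high order}.

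Your approach instead isolates (i) first, via the implicit-function description of $\gamma$ as the graph $s=s(v)$ together with Lemma~\ref{L: last lemma} (depth $\Rightarrow$ order, hence $z(v,0)-z_0=O(v^k)$) and the factorisation $z(v,s)-z_0=(z(v,0)-z_0)+s\,h(v,s)$ with $h(0,0)=1$, which gives $s(v)=O(v^k)$ directly. Items (ii)--(iv) then follow by feeding the standard resonant path back into Theorem~\ref{T: tangent high order}. This is more economical, since it reuses machinery already in place, and it makes transparent that the tangency statement is really a scalar order-of-vanishing statement for $s(v)$. The paper's direct induction, by contrast, is self-contained (it does not invoke the converse theorem) and exhibits explicitly how each coefficient $\beta_l$ is killed by the depth hypothesis via the conjugate vector. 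One small point worth tightening in your write-up: when you pass to an ``arbitrary analytic parametrisation'' of $\gamma$, allow the linear coefficient of $W$ to be any nonzero $\alpha_1$ rather than exactly~$1$; the conclusion of (iii) as a linear combination is unaffected, and the standard case is precisely $\alpha_1=1$, $\alpha_j=0$ for $j\geq 2$.
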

\begin{proof}
Let $N(v)$ be a parametrisation of the resonance curve $\gamma.$ 
Since the curve $\gamma$ is the intersection of the resonance set with the affine plane 
$N_0 + \mbC W + \mbC W_0,$ where $W_0 = \scal{\chi_0,\cdot}\chi_0,$ 
the Taylor expansion of $N(v)$ has the form
\begin{equation} \label{F: H(s)=H0+s*(alpha1*V+beta1*W)+...}
  N(v) = N_0 + \sum_{j=1}^\infty v^j(\alpha_j W + \beta_j W_0).
\end{equation}
A path of eigenvectors~$\chi(v)$ corresponding to this path has Taylor series
\[
  \chi(v) = \chi_0 + v\chi_1 + v^2\chi_2 + \ldots
\]
which starts at the vector~$\chi_0.$ 
Comparing the coefficients of~$v^1$ on both sides of the eigenvalue equation
\begin{equation} \label{F: N(v)chi(v)=z0 chi(v)}
 N(v) \chi(v) = z_0 \chi(v), 
\end{equation}
gives 
\[
  (N_0-z_0)\chi_1 = -(\alpha_1 W + \beta_1 W_0)\chi_0.
\]
The vector $(N_0-z_0)\chi_1$ is orthogonal to the eigenspace~$\clV(\bar z_0,N_0^*)$ 
and in particular it is orthogonal to
\[
  \chi^*_0 := \phi^*(0).
\] 
Hence,
\[
  \scal{\chi^*_0,(\alpha_1 W + \beta_1 W_0)\chi_0} = 0.
\]
Since by the premise~$\chi_0$ has depth $\geq 1,$ 
we have \[\scal{\chi^*_0,W\chi_0} = 0,\] and
combining this equality with previous one gives
$
  \beta_1 = 0.
$
Combining this with \eqref{F: H(s)=H0+s*(alpha1*V+beta1*W)+...} 
we see that $N(v)$ is tangent to~$W$ at~$N_0$ (to order at least 2), and 
\[
  (N_0-z_0)\chi_1 = -\alpha_1 W \chi_0.
\]
So, by Lemma~\ref{L: this is enough}, the vector $\chi_1$ is a resonance vector, has order~$2$ and 
$
  \bfA_{z_0}(N_0,W) \chi_1 = \alpha_1 \chi_0.
$
Further, if the parametrisation of $\gamma(v)$ is standard, then $\alpha_1=1.$ 

This proves the theorem in the case of $k=2.$ We proceed by induction on~$k.$
So, assume that the claim holds for~$k\leq l$
and let~$\chi_0$ be an eigenvector of depth~$\geq l.$
By $l$~times differentiating the eigenvalue equation~\eqref{F: N(v)chi(v)=z0 chi(v)}, we obtain
\[
  \sum_{j=0}^l \binom{l}{j} N^{(j)}(v) \chi^{(l-j)}(v) = z_0 \chi^{(l)}(v).
\]
Letting $v = 0$ and replacing~$\chi^{(j)}(0)/j!$ by~$\chi_j$ gives 
\[
  N_0 \chi_l + \sum_{j=1}^l (\alpha_j W + \beta_j W_0) \chi_{l-j} = z_0 \chi_{l}.
\]
By the induction assumption, we have 
\begin{equation} \label{F: beta1=...=beta(l-1)=0}
  \beta_1 = \ldots = \beta_{l-1} = 0.
\end{equation}
Hence,
\begin{equation} \label{F: (N0-z0)chi(l)+...=0}
  (N_0 - z_0)\chi_l + \sum_{j=1}^{l-1} \alpha_j W \chi_{l-j} + (\alpha_l W + \beta_l W_0) \chi_0 = 0.
\end{equation}
Since~$\chi_0$ has depth at least~$l,$ 
by Corollary~\ref{C: depths equal} so does $\chi^*_0$ and therefore,
for some vector~$g$ we have
$
  \chi^*_0 = \bfA^l_{\bar z_0}(N_0^*,W) g.
$
Since, by the induction assumption,~$\chi_j,$ $j=0,1,\ldots,l-1,$ is a vector of order $j+1,$ 
it follows that for all $j = 0,1,\ldots,l-1$
\[
  \scal{\chi^*_0,W\chi_j} = \scal{\bfA^l_{\bar z_0} g,W\chi_{j}} 
  = \scal{g,W\bfA^l_{z_0} \chi_{j}} = 0.
\]
Hence, it follows from \eqref{F: (N0-z0)chi(l)+...=0} (and $\scal{\chi^*_0, \chi_0}\neq 0$, see \eqref{F: (phi* mu,phi nu)=delta})
by taking the scalar product with~$\chi^*_0$ that 
\begin{equation} \label{F: beta(l)=0}
  \beta_l = 0
\end{equation}
and so
\begin{equation} \label{F: fucking equality}
  (N_0 - z_0)\chi_l + \sum_{j=1}^{l} \alpha_j W \chi_{l-j} = 0.
\end{equation}
It follows from \eqref{F: H(s)=H0+s*(alpha1*V+beta1*W)+...}, \eqref{F: beta1=...=beta(l-1)=0} and 
\eqref{F: beta(l)=0} that~$W$ is tangent to $\gamma$ to order at least~$l+1.$
Further, the vector $(N_0-z_0)\chi_l$ is orthogonal to $\clV(\bar z_0,N_0^*)$
and the vectors 
$
  W\chi_0, \ \ldots, W\chi_{l-2} 
$
are also orthogonal to $\clV(\bar z_0,N_0^*)$ by the induction assumption, 
since the vectors~$\chi_0,\ldots,\chi_{l-2}$ have depth at least one. 
Hence, according to \eqref{F: fucking equality}, so is the vector $W \chi_{l-1}$. 
Using an argument from the proof of Lemma~\ref{L: this is enough}, one can infer 
from \eqref{F: fucking equality} that $\chi_l$ is a resonance vector and 
\[
  \bfA_{z_0}(N_0,W) \chi_l = \sum_{j=1}^{l} \alpha_j \chi_{l-j}.
\]
It also follows from this that~$\chi_l$ has order~$l+1.$ 
Since, by the induction assumption, the vectors~$\chi_0, \ldots, \chi_{l-2}$ 
have depth $\geq 1,$ the last equality implies that~$\chi_{l-1}$ is also a resonance vector
and has the same property. 

Further, if the parametrisation of $\gamma(v)$ is standard, then $\alpha_2 = \ldots = \alpha_{l} = 0$
and $\alpha_1 = 1$ and therefore
$
  \bfA_{z_0}(N_0,W) \chi_l = \chi_{l-1}.
$
\end{proof}

\section{Tangency of the vector field generated by a commutator}\label{S: example}
To conclude we remark on the well-known fact of the isospectrality of a solution to a differential equation with a Lax representation, from the point of view of this paper.
Let a pair of operators~$N(t)$ and~$W(t)$, depending on time~$t$, satisfy Lax's equation 
\begin{equation}\label{F: Lax rep}
 \frac{d}{dt}N(t) = [N(t),W(t)],
\end{equation}
where $[N,W] = NW - WN$ is the commutator. 
As suggested by the notation, we consider the case when~$N$ and~$W$ respectively take values in the affine space~$\clA$ and the vector space of directions~$\clA_0$. 

Given~$W$, the isospectrality of a solution~$N$ of~\eqref{F: Lax rep} can be viewed as the tangency of the vector field $N\mapsto[N,W]$, hence the curve~$N$, to the resonance set $\euR(z_0)$ for any simple eigenvalue~$z_0$ of $N(t_0)$. 
Indeed, if for any $t=t_0$ with $N_0 := N(t_0)$ we have $N_0\phi = z_0\phi$, then
\begin{equation*}
 \scal{\phi^*,[N_0,W(t_0)]\phi} = \scal{N_0^*\phi^*,W(t_0)\phi} - \scal{\phi^*,W(t_0)N_0\phi} 
= 0.
\end{equation*}
Thus by Theorem~\ref{T: TFAE to chi has order k}, $\phi$~has depth at least one and equivalently $z'(t_0) = 0$. 
Therefore $N(t)$ is tangent at~$t_0$ to the resonance set~$\euR(z_0)$ by Theorem~\ref{T: high order then tangent}. 
Since $t_0$ was chosen generically, it follows that the eigenvalue $z(t)$ remains fixed at~$z_0$ and that $N(t)$ remains tangent to the resonance set~$\euR(z_0)$.

{\itshape Acknowledgements.} 
We thank Yang Shi for bringing our attention to the example involving Lax pairs. 
Also, the first author N.\,A.~thanks his wife, Feruza, for financial support during the work on this paper.

\end{document}